\documentclass[12pt]{amsart}
\usepackage[english]{babel}

\usepackage{pifont}
\usepackage{todonotes}
\usepackage{comment}

\usepackage{geometry}
\usepackage{amsthm,amsmath,amssymb,amsfonts,amscd,mathtools}
\usepackage[shortlabels]{enumitem}
\usepackage{hyperref,cleveref}
\usepackage{multirow,float}

\theoremstyle{plain}
\newtheorem*{maintheorem}{Main Theorem}
\newtheorem{theorem}{Theorem}[section]

\newtheorem{lemma}[theorem]{Lemma}
\newtheorem{proposition}[theorem]{Proposition}

\newtheorem{corollary}[theorem]{Corollary}

\theoremstyle{definition}
\newtheorem{definition}[theorem]{Definition}
\newtheorem{remark}[theorem]{Remark}

\theoremstyle{remark}
\newtheorem{example}[theorem]{Example}

\numberwithin{equation}{section}
\numberwithin{figure}{section}
\numberwithin{table}{figure}

\newcommand{\pt}[1]{\left({#1}\right)}
\newcommand{\pq}[1]{\left[{#1}\right]}

\newcommand{\rest}[2]{\left.{#1}\right|_{#2}}
\newcommand{\pg}[1]{\left\{{#1}\right\}}
\newcommand{\abs}[1]{\left|{#1}\right|}
\newcommand\norm[1]{\left\lVert#1\right\rVert}

\newcommand{\vol}{\operatorname{Vol}}

\newcommand{\g}{\mathfrak{g}}
\newcommand{\ab}{\mathfrak{a}}
\newcommand{\abj}{\mathfrak{a}_J}

\newcommand{\da}{d_A}

\newcommand{\Z}{\mathbb{Z}}

\newcommand{\R}{\mathbb{R}}

\newcommand{\C}{\mathbb{C}}

\DeclareMathOperator{\real}{Re}

\DeclareMathOperator{\tr}{tr}
\DeclareMathOperator{\id}{Id}

\DeclareMathOperator{\ad}{ad}
\DeclareMathOperator{\End}{End}

\title[Special structures on almost abelian solvmanifolds]{Special structures on almost abelian solvmanifolds}

\author{Asia Mainenti}
\address[A. Mainenti]{Institute of Mathematics “Simion Stoilow” of the Romanian Academy, 21 Calea Grivitei, 010702 Bucharest, Romania}
\email{asia.mainenti@imar.ro}

\author{Andrei Moroianu}
\address[A. Moroianu]{Université Paris-Saclay, CNRS,  Laboratoire de mathématiques d'Orsay, 91405, Orsay, France, 
and Institute of Mathematics “Simion Stoilow” of the Romanian Academy, 21 Calea Grivitei, 010702 Bucharest, Romania}
\email{andrei.moroianu@math.cnrs.fr}

\thanks{}
\keywords{Almost abelian Lie algebras, $p$-Kähler structures, balanced Hermitian metrics, $p$-pluriclosed structures, lattices}
\subjclass[2020]{Primary: 22E25, 53C55}

\begin{document}

\begin{abstract} We characterize every almost abelian Lie algebra endowed with an integrable complex structure by a triple, called presentation, consisting of a real number, an element in some vector space and an endomorphism of that vector space. We then classify in terms of presentations the almost abelian Lie algebras admitting $p$-Kähler or $p$-pluriclosed structures, and in particular those carrying Kähler, balanced, pluriclosed and Gauduchon metrics.
Furthermore, we characterize the existence of LCK and Bismut-Ricci flat metrics in this setting.
\end{abstract}

\maketitle

\section{Introduction}

Over the last few decades, there has been increasing interest in structures on complex manifolds that generalize Kähler metrics.
Among these, we find the so-called \textit{special} Hermitian metrics, defined by imposing weaker cohomological assumptions than the Kähler one.
More precisely, let $(M,J)$ be a complex manifold of (complex) dimension $n$ and consider the $J$-twisted differential operator $d^c:=JdJ^{-1}$.
A first generalization was introduced by Gauduchon in \cite{Gaud77}: a Hermitian metric $\omega$ is called \textit{Gauduchon} if $dd^c\omega^{n-1}=0$.
In the same paper, Gauduchon proved that on a compact complex manifold of dimension at least $2$, there exists a unique, up to scalar rescaling, Gauduchon metric in every conformal class.
A more restrictive condition is obtained requiring that $d\omega^{n-1}=0$, in which case the Hermitian metric is called \textit{balanced}. 
This notion was thoroughly studied in \cite{michelsohn}, where Michelsohn established an interpretation in terms of the torsion of the Chern connection, and showed that the existence of balanced metrics can be characterized in terms of currents (just as in the Kähler case). 
A notable consequence is that the class of balanced manifolds is stable under modifications \cite{AlBas95}.

On the other hand, imposing weaker cohomological assumptions on the metric itself, we find \textit{pluriclosed}, also called \textit{strong Kähler with torsion (SKT)}, metrics, defined by the condition $dd^c\omega=0$, see \cite{Bis89}.
This notion is related to the Bismut connection associated to $\omega$, namely it is equivalent to the Bismut torsion being $d$-closed. Pluriclosed metrics are also particularly relevant in generalized complex geometry, due to their relation with generalized Kähler structures.
A further generalization, particularly relevant in conformal geometry, is given by \textit{locally conformally Kähler}, or \textit{LCK} metrics. 
These are defined by the relation $d\omega=\theta\wedge\omega$, for some closed $1$-form $\theta$.
For more details on LCK manifolds, we refer to \cite{ov24}.
Besides the restrictions given by cohomological or conformal assumptions, one can consider curvature constraints.
Among this wide class, we will turn our attention to \textit{Bismut-Ricci flat}, or \textit{Calabi-Yau with torsion (CYT)} metrics, namely those Hermitian metrics whose Bismut connection has vanishing Ricci curvature, see for instance \cite{FG24}.

All the aforementioned conditions are imposed on forms of bidegree either $(1,1)$ or $(n-1,n-1)$ which are powers of Hermitian metrics.
A natural question is then if analogous notions for $p$-th powers of Hermitian metrics, for $2\le p\le n-2,$ carry similar remarkable properties.
As it turns out, this is the case for the weaker cohomological assumption of being $dd^c$-closed, for instance for $p=n-2$: \textit{astheno-Kähler} metrics are defined by $dd^c\omega^{n-2}=0$, in \cite{JY93}, and have been used to obtain existence of Hermitian harmonic maps, with applications to the study of the fundamental group of the target.

On the other hand, for every $p\le n-2$, if $\omega^p$ is $d$-closed, then $\omega$ is $d$-closed itself, and so it is a Kähler metric.
With the goal of weakening the assumption, while maintaining the same cohomological constraint, the condition of being the power of a Hermitian metric can be interpreted as a positivity notion, and hence be replaced with a weaker one.
This is the motivation behind the definition of a \textit{$p$-Kähler} structure \cite{AA}, namely a $d$-closed $(p,p)$-form which is \textit{transverse}, in the sense that its restriction to every tangent subbundle of (complex) dimension $p$ is a positive volume form.
We note that a $1$-Kähler structure is a Kähler metric, whereas a $(n-1)$-Kähler structure is the exterior power of a balanced metric \cite{michelsohn}.
Furthermore, $p$-Kähler manifolds share many common behaviors with Kähler and balanced ones: they are relevant in the context of modifications of compact Kähler manifolds, and for $p=n-2$, they have applications in non-abelian Hodge theory, as noted in \cite{FM26}.
One of the main open problems in this setting is the Alessandrini-Bassanelli conjecture, stating that the existence of a $p$-Kähler structure implies the existence of a  $(p+1)$-Kähler one, for every $p\le n-2$.
Besides the evidence coming from the known examples, there are a few cases where the conjecture was proved to hold, such as a class of holomorphically parallelizable nilmanifolds \cite{LoG2025}, and nilmanifolds of complex dimension $4$ (cf. \cite{fm}).

In a similar fashion, a \textit{$p$-pluriclosed structure} on a complex manifold is  a $dd^c$-closed and transverse $(p,p)$-form \cite{Ale11}.
Once again, we recover pluriclosed metrics, for $p=1$, and powers of Gauduchon metrics, for $p=n-1$.
Moreover, $(n-2)$-th powers of astheno-Kähler metrics are $(n-2)$-pluriclosed structures.

\smallskip
The aim of the paper is to discuss the existence of the aforementioned structures, namely $p$-Kähler and $p$-pluriclosed structures, for $1\le p\le n-1$, and LCK and Bismut-Ricci flat metrics, on almost abelian solvmanifolds of real dimension $2n$.
In this setting, some particular cases were already studied in literature: Kähler metrics in \cite{LR}, balanced ones in \cite{fp23}, whereas pluriclosed metrics were discussed in \cite{AL}, LCK metrics in \cite{AO18}, Bismut-Ricci flat metrics in \cite{AL,fp23}, generalized Kähler structures in \cite{fp21}, pseudo-Kähler structures in \cite{CGG26}.
Observe that by symmetrization, the problem is equivalent to the analogous one at the Lie algebra level \cite{FG,fm}. If $\g$ is a Lie algebra with complex structure $J$, we say that $(\g,J)$ is Kähler, balanced, $p$-Kähler, etc. if there exists a Kähler, balanced, $p$-Kähler, etc. structure on $\g$ compatible with $J$.

We will thus consider almost abelian Lie algebras, namely Lie algebras that contain a codimension $1$ abelian ideal, endowed with an integrable complex structure.
It turns out that these can be described by triples $(\lambda,v,A)$, where $\lambda$ is a real number, $v$ is an element of a $2(n-1)$-dimensional vector space $\abj$ with complex structure $J$, and $A$ is an endomorphism of $\abj$ commuting with $J$. 
Such a triple, called a presentation, is determined up to the equivalence relation described in \eqref{changeTilde}. 

The main results that we obtain in the paper can be summed up as follows. 
\begin{maintheorem}
    Let $J$ be a complex structure on an almost abelian Lie algebra $\g$ of dimension $2n$ with presentation $(\lambda,v,A)$. 
Then,
\begin{enumerate}[label=(\roman*)]
    \item\label{TI1} $(\g,J)$ is Kähler if and only if $A$ is diagonalizable over $\C$, has purely imaginary spectrum, and $v$ belongs to the image of $A-\lambda \id $.
    \item\label{TI2} $(\g,J)$ is balanced if and only if $\tr(A)=0$ and $v$ belongs to the image of $A-\lambda \id $.
    \item\label{TI3} $(\g,J)$ is $p$-Kähler, for $p\le n-2$ if and only if it is Kähler.
    \item\label{TI4} $(\g,J)$ is pluriclosed if and only if $A$ is diagonalizable over $\C$ and its spectrum is contained in $(i\R)\cup (-\lambda/2+i\R)$.
    \item\label{TI5} $(\g,J)$ is Gauduchon if and only if  $\tr(A)\in\{0,-\lambda\}$, in which case every Hermitian metric is Gauduchon.
    \item\label{TI6} $(\g,J)$ is $p$-pluriclosed, for $p\le n-2$, if and only if $A$ is diagonalizable over $\C$ and the real parts of its eigenvalues satisfy a set of quadratic relations determined by $p$ and $\lambda$.
    \item\label{TI7}   $(\g,J)$ is LCK if and only if
        either $v\in (A-\lambda \id )(\abj)$ and $A$ is diagonalizable over $\C$, with all eigenvalues having the same real part, 
        or $n=2$ and $A=0$.
    \item\label{TI8}  $(\g,J)$ is Bismut-Ricci flat if and only if  one of the following holds:
    \begin{itemize}
        \item\label{iBRf1}  $\lambda\pt{2\lambda-\tr A}=0$ and $v$ is in the image of $(A-\lambda \id)$, or
        \item\label{iBRf2}  $\lambda\pt{2\lambda-\tr A}<0$ and  $v\in(A-\lambda \id)(\abj)+(\abj\setminus A(\abj))$.
    \end{itemize}
\end{enumerate}
\end{maintheorem}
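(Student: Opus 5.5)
We will work entirely at the Lie algebra level, using the symmetrization principle recalled above. The first step is to fix the normal form behind a presentation. Choose $e_1\in\g$ outside the codimension one abelian ideal $\mathfrak n$, and set $e_2:=Je_1$. After modifying $e_1$ by an element of $\mathfrak n$, we may assume $e_2\in\mathfrak n$. Then $\abj:=\mathfrak n\cap J\mathfrak n$ is a $J$-invariant subspace of real dimension $2n-2$, with $\mathfrak n=\R e_2\oplus\abj$. The bracket is determined by $\ad_{e_1}|_{\mathfrak n}$. Integrability of $J$ forces $\ad_{e_1}$ to preserve $\abj$ and to commute with $J$ there, and it forces $[e_1,e_2]=\lambda e_2+v$. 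This gives the presentation $(\lambda,v,A)$ with $A=\ad_{e_1}|_{\abj}$.

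Next we write down the structure equations. Let $e^1,e^2$ be the dual coframe, extended by zero on $\abj$. For every $\alpha\in\Lambda^k\abj^*$ we get
\[
d\alpha=-e^1\wedge A^*\alpha-\text{(contribution of $v$)}\wedge e^{12},\qquad de^1=0,\qquad de^2=-\lambda e^{12},
\]
where $A^*$ denotes the derivation action on forms. Every $J$-compatible $(p,p)$-form $\Omega$ then decomposes as $\Omega=e^{12}\wedge\beta+\gamma$, plus mixed terms in $e^1\wedge\sigma$ and $e^2\wedge J\sigma$, with $\beta,\gamma$ forms on $\abj$. We will compute $d\Omega$ and $dd^c\Omega$ in these terms.

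Each item then reduces to a linear-algebra condition.

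For \ref{TI1} and \ref{TI3}: $d\omega=0$ forces $A^*\gamma=0$ for a positive form $\gamma$, i.e.\ $A$ lies in $\mathfrak{u}(\gamma)$. This gives diagonalizability and imaginary spectrum. The mixed terms can be killed by a change of $e_1$ exactly when $v\in\operatorname{Im}(A-\lambda\id)$.

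For \ref{TI3} itself, the transversality of a closed $(p,p)$-form with $p\le n-2$ gives $A^*\gamma=0$ for a transverse $\gamma$ on $\abj$. We expect to show that this forces the Hermitian part of $A$ to vanish, by averaging over the closure of $\exp(tA)$ and taking traces against transverse forms.

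For \ref{TI2}: closedness of $\omega^{n-1}$ reduces to $\tr A=0$ (coming from $A^*\gamma^{n-1}=\tr(A)\gamma^{n-1}$) together with the $v$-condition.

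For \ref{TI4}–\ref{TI6}: the operator $dd^c$ acts on $\gamma$ through the quadratic operator $A^*(A^*+\lambda)$ restricted to $(p,p)$-forms. The $dd^c$-closedness condition becomes a condition on eigenvalue real parts $a_j$, namely $\sum (a_{j}+a_{k})(a_j+a_k+\lambda)$-type relations over index sets of size $p$. For $p=1$ these yield $a_j\in\{0,-\lambda/2\}$. For $p=n-1$ they collapse to the scalar relation $\tr A(\tr A+\lambda)=0$, independent of the metric, which gives \ref{TI5}. Diagonalizability should follow because a nilpotent part produces a non-zero term of definite sign when paired with a transverse form.

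For \ref{TI7}: $d\omega=\theta\wedge\omega$ forces $\theta$ to be a multiple of $e^1$. The equation then becomes $A^*\gamma=c\gamma$, meaning $A-\frac c2\id$ is $\gamma$-skew. The exception $n=2$, $A=0$ arises because $\abj$ is then one-complex-dimensional and $\theta$ can also involve $e^2$.

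For \ref{TI8}: we will compute the Bismut Ricci form $\rho^B=-dd^c\log\det$-type, i.e.\ $d$ of $J\theta$ minus a Chern term. On a unimodular-type decomposition it is a multiple of $e^{12}$ with coefficient $\lambda(2\lambda-\tr A)$ plus a norm term $|w|^2$, where $w$ is the component of $v$ not absorbable by $A-\lambda$. Requiring this to vanish for some choice of metric gives the two listed cases.

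The main obstacle is \ref{TI6} together with the positivity arguments in \ref{TI3} and \ref{TI6}, since transverse forms are not powers of metrics and no single metric is available. Our first attempt there is to reduce to forms invariant under the compact torus generated by the imaginary parts of $A$. This makes $\gamma$ diagonal in an eigenbasis, after which the conditions become linear inequalities in positive coefficients; a Farkas-type alternative then yields the quadratic relations.
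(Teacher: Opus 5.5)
Your proposal is an outline with genuine gaps in items (iii), (vi), (viii) and (vii).

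\textbf{Item (iii).} You address only $A$, never $v$. Even if $A$ is skew-Hermitian for some metric on $\abj$, this does not give a K\"ahler metric when $\lambda=0$: by item (i) you also need $v\in A(\abj)$. For a $(p,p)$-form you cannot ``kill the mixed terms by changing $e_1$'', because the component $\eta=\iota_X\Omega|_{\abj}$ cannot in general be removed by a change of $y$. The missing ingredient is the second closedness equation in \eqref{domega=0}, namely $2\iota_v\Omega_J=-i(\lambda\id+A^*)(\eta-\bar\eta)$. Normalize to $0\neq v\in\ker A$ (Corollary \ref{vinKer}); then the equation says $\iota_v\Omega_J\in A^*(\Lambda\abj^*)$. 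Evaluating both sides on $Jv$ and eigenvectors of $A$ gives zero on the right, since the eigenvalues are imaginary, but a non-zero value on the left, by transversality.

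As for $A$ itself, averaging over $\overline{\{\exp(tA)\}}$ presupposes that this closure is compact, which is the entire content. Your route can be completed. $A^*\gamma=0$ puts $\exp(tA)$ in the stabilizer of $\gamma$. Write a stabilizer element as $g=u\,D\,u'$ with $u,u'$ unitary and $D=\mathrm{diag}(e^{s_2},\dots,e^{s_n})$. Then $g^*\gamma=\gamma$ becomes $D^*(u^*\gamma)=(u'^{-1})^*\gamma$, with both sides in the compact $U(n-1)$-orbit of $\gamma$. Evaluating on coordinate $p$-planes bounds $\sum_{i\in I}s_i$ for all $|I|=p$, hence every $s_i$ because $p<n-1$. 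So the stabilizer is compact. This would be a shorter alternative to the paper's Jordan-basis argument (Lemma \ref{induction}, Lemma \ref{l22} on $V_{n-p}$, $\Omega_J|_{W_{p+1}}=\sigma^p$ with Lefschetz, then Proposition \ref{AdiagK}). As written, however, it is only an expectation.

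\textbf{Item (vi).} There is no argument for the key step: diagonalizability of $A$ from $(\lambda\id+A^*)A^*\Omega_J=0$ with $\Omega_J$ merely transverse. This is not a stabilizer condition, so compactness is unavailable. Averaging over the torus generated by the imaginary parts of $A$ neither sees the nilpotent part nor diagonalizes $\Omega_J$; it is trivial when the spectrum is real. Your ``definite sign'' heuristic is the $p=1$ argument of Proposition \ref{PAomega=0}. For $2\le p\le n-2$ the paper needs Theorem \ref{411}:
\begin{itemize}
\item iterated contractions reduce $\Omega_J$ to a Hermitian form on $V_{n-p}$ annihilated by $(A^*-U_{p-1})(A^*-U_{p-1}+\lambda)$, which kills $\delta_2,\dots,\delta_{n-p}$;
\item the restriction $\Omega_J|_{W_{p+1}}=\sigma^p$, combined with Lemmas \ref{A1}, \ref{A2} and \ref{polyn}, kills the remaining $\delta_j$.
\end{itemize}
Once $A$ is diagonalizable, no Farkas argument is needed. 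Evaluating on eigenvectors gives $(\lambda+u_I)u_I=0$ separately for each $|I|=p$, where $u_I=\sum_{j\in I}(z_j+\bar z_j)$. These are not the pairwise relations you wrote, and they are solved in Lemma \ref{Alemma}.

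\textbf{Item (viii).} Your claim that $\rho^B$ is a multiple of $e^{12}$ is false. Formula \eqref{BRform} also contains $-x^\flat\wedge A^*v^\flat$, which is exactly what $d$ of the $\abj^*$-component of $J\theta$ produces in your own recipe. Flatness therefore also requires $A^t\tilde v=0$, i.e.\ $\tilde v\perp A(\abj)$. This condition is what produces $\abj\setminus A(\abj)$ and forces $\ker A\neq\{0\}$. Without it you would find Bismut--Ricci flat metrics for invertible $A$ with $\lambda(2\lambda-\tr A)<0$ (e.g.\ $n=2$, $\lambda=1$, $A=3\,\id$), where none exist.

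\textbf{Item (vii).} The $e^2$-component of $\theta$ always vanishes, so it cannot be the source of the exception. The exception comes from the $\abj^*$-component $\theta_0=-\iota_v\omega_0$: the condition $\theta_0\wedge\omega_0=0$ forces $n=2$, and $A^*\theta_0=0$ forces $A=0$. In the main case you never derive the condition $v\in(A-\lambda\id)(\abj)$.
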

In particular, \cref{TI3} confirms the Alessandrini-Bassanelli conjecture on almost abelian solvmanifolds, and the last item provides examples of unimodular almost abelian Lie algebras admitting $p$-pluriclosed structures only for specific values of $p$.
This shows that the analogue of the Alessandrini-Bassanelli conjecture for $p$-pluriclosed structures does not hold.
It is also worth noting that the last condition in \cref{TI1,TI2} is automatically satisfied if $\g$ is not unimodular, whereas it only depends on $A$ in the unimodular case, as $\lambda$ is forced to vanish by the further restrictions on the spectrum of $A$. 

Note that the Main Theorem recovers and generalizes the previously known results from the literature mentioned above concerning K\"ahler, balanced, pluriclosed, LCK and Bismut-Ricci flat metrics on almost abelian Lie algebras.

As an application, we confirm, in the almost abelian setting, some conjectures about coexistence of different types of special metrics.
The Fino-Vezzoni conjecture \cite{fv15} states that if a compact complex manifold admits both a balanced metric and a pluriclosed one, then it should also admit a K\"ahler metric. 
This was confirmed in  many cases, especially on special classes of compact quotients of Lie groups by lattices, including almost abelian solvmanifolds (in  \cite{fp23}).
We recover this last result, as a consequence of the Main Theorem.
This conjecture was extended in \cite{ov25}, to state that, in complex dimension higher than $2$, the existence of two types of metrics, among balanced, pluriclosed and LCK, should imply the existence of a Kähler metric. 
Comparing items \ref{TI2}, \ref{TI4} and \ref{TI7} in the Main Theorem, one can see that this more general conjecture is satisfied as well on unimodular almost abelian Lie algebras.
This proves that, on an almost abelian solvmanifold with complex structure, the existence of both a balanced (or pluriclosed) metric, and a left-invariant LCK one, implies the existence of a K\"ahler metric.

Furthermore, $\g$ is nilpotent if and only if for every presentation $(\lambda,v,A)$ of $(\g,J)$, one has $\lambda=0$ and $A$ nilpotent.
Therefore, we can describe explicitly the nilpotent cases among the ones appearing in the Main Theorem:
\begin{itemize}
    \item in cases \ref{TI1} and \ref{TI3}, $\g$ is abelian (recovering the well known fact that K\"ahler nilpotent Lie algebras are abelian);
    \item in case \ref{TI2},  $(\g,J)$ is balanced if and only if $v$ is in the image of $A$, and thus there are no restrictions on the nilpotency step of $\g$;
    \item in cases \ref{TI4} and \ref{TI6}, $A=0$, so in particular $\g$ is $2$-step nilpotent, and moreover, if $\g$ is not abelian, it is a direct product $\g\simeq\mathfrak{h}_3\oplus\R^{2n-3}$, where $\mathfrak h_3$ is the Heisenberg Lie algebra;
    \item every nilpotent Lie algebra satisfies the conditions in case \ref{TI5}, consistently with the fact that on a unimodular Lie algebra, every Hermitian metric is Gauduchon.
    \item in case \ref{TI7} the only possibility is $A=0$, and moreover if $n\ge 3$ one also has $v=0$, so $\g$ is abelian. Consequently, the only almost abelian nilpotent Lie algebra carrying LCK structures is $\mathfrak h_3 \times\R$ (which confirms \cite[Theorem 3.3]{AO18})."
\end{itemize}
Recall also that by \cite{AABRW}, if a nilpotent almost abelian Lie algebra admits a complex structure, then this is unique.

We will now describe the structure of the paper.

In \Cref{secAA}, we establish the correspondence between an almost abelian Lie algebra and a presentation $(\lambda,v,A)$, and  compute explicitly the action of the operators $d$ and $dd^c$ in terms of a given presentation.

In \Cref{secpK} we study $p$-Kähler structures, proving \cref{TI1,TI2,TI3} in the Main Theorem.
More precisely, \cref{TI1} is proved in Proposition \ref{lemma17}, \cref{TI2} in Proposition \ref{37} and \cref{TI3} in Theorem \ref{pKahler}.
The proof of the latter is based on several technical lemmas using the Jordan form of $A$, together with dimensionality reduction arguments relying on the properties of transverse $(p,p)$-forms.

In \Cref{secpPl} we consider $p$-pluriclosed structures. 
\Cref{TI4} of the Main Theorem is treated in  Proposition \ref{propSKT}, \cref{TI5} in Proposition \ref{410} and \cref{TI6} in Theorem \ref{AdiagPL}.
Moreover, we analyze the existence of lattices on the simply connected Lie groups with unimodular Lie algebras occurring in \cref{TI6} of the Main Theorem.

In \Cref{secLCK}, we conclude the proof of the Main Theorem.
We consider LCK metrics in Proposition \ref{p53}, proving \cref{TI7}, and Bismut-Ricci flat metrics in Proposition \ref{pCYT},  proving \cref{TI8}.

Finally, we collect in the Appendix some technical facts of linear algebra and combinatorics which are needed in the proof of the main results.

\smallskip

{\bf Acknowledgments.} 
The authors would like to thank Diego Conti for the careful reading of a preliminary version and useful comments, Jorge Lauret for suggesting to consider LCK and CYT metrics, and Sönke Rollenske for pointing out the connection with other references treating the nilpotent setting.
This work was partly supported by the PNRR-III-C9-2023-I8 grant CF 149/31.07.2023 {\it Conformal Aspects of Geometry and Dynamics}.

\section{Preliminaries on almost abelian Lie algebras}\label{secAA}

\subsection{Presentations of almost abelian Lie algebras}
A real Lie algebra $\g$ is called almost abelian if it has an abelian ideal $\ab$ of codimension $1$. Every almost abelian Lie algebra $(\g,\ab)$ is $2$-step solvable. Indeed, the derived Lie algebra $\g'=[\g,\g]$ is contained in $\ab$, which is abelian, so $[\g',\g']=0$.

Since $\ab$ is an ideal, every $x\in\g\setminus \ab$ determines an endomorphism $\varphi:=\rest{\ad_{x}}\ab\in\End(\ab)$. Then, $\g$ is isomorphic to the semidirect product
\begin{equation}\label{semiDirProd}
\g\simeq\ab\rtimes_\varphi \R.
\end{equation}
We will call the pair $(\ab,\varphi)$ a \textit{presentation} of $\g$. If $\tilde x$ is another vector in $\g\setminus \ab$, the corresponding endomorphism $\tilde\varphi$ is just a non-zero multiple of $\varphi$.

Conversely, by \cite[Proposition 1]{freibert}, if $(\ab,\varphi)$ and $(\ab_1,\varphi_1)$ are presentations of $\g$ and $\g_1$, then $\g\simeq\g_1$ if and only if $\varphi$ is conjugate to a non-zero multiple of $\varphi_1$ by some isomorphism $\ab\to\ab_1$.

\subsection{Presentations of almost abelian Lie algebras with complex structures}

Assume now that $(\g,\ab)$ is endowed with an integrable complex structure $J$, and denote $\ab_J:=\ab\cap J\ab$.
From now on, any complex structure will be assumed to be integrable, even when not specified.

\begin{remark}\label{r11}
    By \cite[Section 6]{LR}, it is known that the integrability of $J$ is equivalent to $\abj$ being an ideal, and the restriction of every adjoint map to $\abj$ commuting with the restriction of $J$ to $\abj$.
    Furthermore, the existence of complex structures was studied, in terms of the admissible Jordan blocks of the presentation $\varphi$ of $(\g,\ab)$, in \cite{ABDGH}.
\end{remark}

Every vector $y\in\ab\setminus\abj$ determines a unique vector $x:=-Jy\in\g\setminus\ab$, such that $Jx=y$. The vector $x$ defines a presentation $(\ab,\varphi:=\ad_x)$ of $\g$ as above. By Remark \ref{r11}, the restriction $A:=\varphi|_{\abj}$ is an endomorphism of $\abj$ commuting with $J|_{\abj}$. 
Furthermore, we can write
\begin{equation}\label{lv}
\varphi(y)=[x,y]=\lambda y+v
\end{equation}
for $\lambda\in\R$, $v\in\abj$. The triple $(\lambda,v,A)$ is called the \textit{presentation} of $(\g,\ab,J)$ determined by $y$. 
Conversely, for every tuple
\begin{equation*}
    \pt{\abj,J,\lambda,v,A}
\end{equation*}
consisting of a real, even dimensional vector space $\abj$ endowed with a complex structure $J$, a real number $\lambda\in\R$, a vector $v\in\abj$, and an endomorphism $A\in\End(\abj)$ commuting with $J$, we can construct an almost abelian Lie algebra $(\g,\ab)$ with a complex structure with presentation $(\lambda,v,A)$.

Indeed, we first define the abelian Lie algebra $\ab$ as the direct product $\ab:=\R\oplus\abj$, and denote by $y$ the generator of the factor $\R$. 
Then, we consider the endomorphism $\varphi$ of $\ab$ such that $\varphi|_{\abj}= A$ and $\varphi(y)=\lambda y+v$. Finally, we define the almost abelian Lie algebra $\g$ as the semidirect product $\g:=\ab\rtimes_\varphi \R$, and extend $J$ from $\abj$ to $\g$ by $Jx=y$, where $x$ is the generator of the factor $\R$ in $\g$ (so in particular $\ad_x=\varphi$). By the above discussion, $(\g,\ab,J)$ is an almost abelian Lie algebra with complex structure, whose presentation with respect to $y$ is $(\lambda,v,A)$.



We will now describe how the presentation changes with the choice of $y$, see also \cite{fp23}.
Let $\tilde y=cy+a$, for some non-zero $c\in\R^*$, $a\in\abj$. 
Then, $\tilde x=-J\tilde y=cx-Ja$ and the corresponding tuple $(\tilde\lambda,\tilde v,\tilde A)$ can be
computed as follows:
\begin{equation*}
\begin{aligned}
	\tilde A&=\rest{\ad_{\tilde x}}\abj=\rest{\ad_{cx-Ja}}\abj=c\rest{\ad_{x}}\abj=cA,\\
	[\tilde x,\tilde y]&=\tilde\lambda\tilde y+\tilde v\\
	 &=[c x-Ja,cy+a]
		 =c^2\lambda y+c^2v+cAa
		 =c\lambda\tilde y+c\pt{cv+\pt{A-\lambda \id }a},
\end{aligned}
\end{equation*}
where $\id \in\End(\abj)$ is the identity. 
This means that
\begin{equation}\label{changeTilde}
\tilde\lambda=c\lambda,\quad \tilde v=c\pt{cv+\pt{A-\lambda \id }a},\quad \tilde A=cA.
\end{equation}

\begin{remark}\label{unimod}
    The Lie algebra $\g$ is unimodular if and only if $\lambda+\tr(A)=0.$
    This of course does not depend on the choice of the presentation, as seen from \eqref{changeTilde}.
\end{remark}
For a fixed vector $y\in\ab\setminus\abj$ as above, and for every Hermitian metric $g_{\abj}$ on $\abj$, one can define a Hermitian metric $g$ on $\g$, as the unique extension of $g_{\abj}$ to $\g$ such that $\abs{y}_{g}=1=\abs{x}_{g}$ and $g(y,\abj)=0=g(x,\ab)$, where $x:=-Jy$.
We denote the set of all such metrics on $\g$ by $\mathcal{G}(y)$.

Conversely, given a Hermitian metric $g$ on $\g$, if $y$ is one of the two vectors $g$-orthogonal to $\abj$ in $\ab$ and of norm $1$, then $x:=-Jy$ is $g$-orthogonal to $\ab$ in $\g$ and has norm $1$.

\subsection{The Chevalley-Eilenberg differential}
We will now show how the action of the exterior derivative on forms can be written explicitly in terms of a presentation $(\lambda,v,A)$ defined by $y\in\ab\setminus \abj$.


The endomorphisms $A\in\End\pt{\abj}$ and $J\in\End(\g)$ induce endomorphisms on the dual spaces, $A^*\in\End\pt{\abj^*}$, $J\in\End(\g^*)$, defined by
\begin{equation}\label{a*}
\pt{A^*\Psi}(Y)=\Psi(AY),\quad\quad \pt{J\phi}(Z)=-\phi(JZ),
\end{equation}
for every $\Psi\in\abj^*,Y\in\abj,\phi\in\g^*,Z\in\g$.
We denote with the same symbol $A^*$ the extension of $A^*$ as derivation to $\Lambda^k\abj^*$ (in particular $A^*=0$ on $\R=\Lambda^0\abj^*$), whereas $J$ is defined on $\Lambda^k\g^*$ as the linear extension of $J(\phi_1\wedge\dots\wedge\phi_k)=(J\phi_1)\wedge\dots\wedge(J\phi_k)$, for all $\phi_1,\dots,\phi_k\in\g^*$.
In what follows, for $k\ge1$ and $Z\in\g$, $\iota_Z\colon\Lambda^k\g^*\to\Lambda^{k-1}\g^*$ will denote the contraction with $Z$. We will denote with the same symbols the $\C$-linear extension of endomorphisms of real vector spaces to their complexifications.

From the above definition it follows immediately that 
\begin{equation}\label{deriv}
  A^*(\iota_Y\Psi)=\iota_Y(A^*\Psi)-\iota_{AY}\Psi,
\end{equation}
for every $\Psi\in\Lambda^k\abj^*$ and $Y\in\abj$.
In other words,
\begin{equation}\label{iotabracket}
    \pq{\iota_Y,A^*}=\iota_{AY}.
\end{equation}

\begin{lemma}\label[lemma]{dk-form}
Let $(\g,\ab,J)$ be an almost abelian Lie algebra endowed with a complex structure, with a presentation $(\lambda,v,A)$ defined by $y\in\ab\setminus\abj$, and let $x:=-Jy$.
Then, for every $\vartheta\in\Lambda\g^*$, we have
\begin{equation*}
d\vartheta=-x^{\flat}\wedge y^\flat\wedge\pt{\iota_{\lambda y+v}\vartheta+A^*{\rest{\pt{\iota_{y}\vartheta}}{\abj}}}-x^\flat\wedge A^*\pt{\rest{\vartheta}{\abj}},
\end{equation*}
where $x^\flat,y^\flat$ are the $g$-duals of $x,y$ with respect to a metric $g\in\mathcal{G}(y)$.
\end{lemma}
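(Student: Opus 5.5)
The plan is to reduce everything to the action of $\ad_x$ on the abelian ideal $\ab$, using that $d$ is an antiderivation determined by its values on $1$-forms. Fix $g\in\mathcal G(y)$. Then $\g^*=\R x^\flat\oplus\R y^\flat\oplus\abj^*$, where $\abj^*$ is identified with the forms vanishing on $x$ and $y$. By Remark \ref{r11} and \eqref{lv}, the only nonzero brackets are
\[
[x,y]=\lambda y+v,\qquad [x,Y]=AY\quad (Y\in\abj).
\]
In particular, all brackets lie in $\ab$ and involve $x$.

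First I compute $d$ on $1$-forms from $d\phi(Z,W)=-\phi([Z,W])$. For any $\phi\in\g^*$ this gives $d\phi=-x^\flat\wedge\varphi^*(\phi|_{\ab})$, where $\varphi=\ad_x|_\ab$ and $\varphi^*$ acts on $\ab^*$, which is identified with the forms vanishing on $x$. Explicitly:
\begin{itemize}
\item $dx^\flat=0$, since $x^\flat$ vanishes on $\ab$;
\item $dy^\flat=-\lambda\, x^\flat\wedge y^\flat$, since $y^\flat(v)=0$ and $y^\flat(\abj)=0$;
\item $d\Psi=-x^\flat\wedge\bigl(\Psi(v)\,y^\flat+A^*\Psi\bigr)$ for $\Psi\in\abj^*$.
\end{itemize}

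Next I extend $\varphi^*$ to a derivation of $\Lambda\ab^*$. The operator $\vartheta\mapsto -x^\flat\wedge\varphi^*(\vartheta|_\ab)$ is then an antiderivation of degree $1$ on $\Lambda\g^*$. This uses $x^\flat\wedge x^\flat=0$ and the fact that restriction to $\ab$ is multiplicative. This operator agrees with $d$ on $1$-forms, so $d\vartheta=-x^\flat\wedge\varphi^*(\vartheta|_\ab)$ for all $\vartheta$.

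It remains to put this in the stated form. I decompose
\[
\vartheta=x^\flat\wedge y^\flat\wedge\alpha+x^\flat\wedge\beta+y^\flat\wedge\gamma+\delta,\qquad \alpha,\beta,\gamma,\delta\in\Lambda\abj^*.
\]
Then $\vartheta|_\ab=y^\flat\wedge\gamma+\delta$. From the $1$-form formulas:
\begin{itemize}
\item $\varphi^*y^\flat=\lambda y^\flat$;
\item $\varphi^*\Psi=\Psi(v)y^\flat+A^*\Psi$ on $\abj^*$, so on $\Lambda\abj^*$ we get $\varphi^*\delta=y^\flat\wedge\iota_v\delta+A^*\delta$, using \eqref{iotabracket}-type bookkeeping for derivations;
\item $\varphi^*(y^\flat\wedge\gamma)=y^\flat\wedge(\lambda\gamma+A^*\gamma)$, because the extra $y^\flat\wedge\iota_v\gamma$ term is killed by $y^\flat\wedge$.
\end{itemize}
Hence
\[
d\vartheta=-x^\flat\wedge y^\flat\wedge(\lambda\gamma+\iota_v\delta+A^*\gamma)-x^\flat\wedge A^*\delta .
\]

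To compare with the statement, note that $\iota_y\vartheta|_{\abj}=\gamma$ and $\vartheta|_{\abj}=\delta$. Also, modulo terms containing $x^\flat$ or $y^\flat$ (which die after wedging with $x^\flat\wedge y^\flat$), $\iota_{\lambda y+v}\vartheta\equiv\lambda\gamma+\iota_v\delta$. The $\pm$ contributions from $\iota_v$ of $x^\flat\wedge\beta$ and of $y^\flat\wedge\gamma$ still contain $x^\flat$ or $y^\flat$, so they vanish too. This gives exactly the formula of the statement.

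The main obstacle is sign bookkeeping: the convention $d\phi(Z,W)=-\phi([Z,W])$, the order of $x^\flat$ and $y^\flat$, and the sign from moving $\iota_v$ past $y^\flat$. I would double-check these by testing the final formula on $y^\flat$ and on $\Psi\in\abj^*$.
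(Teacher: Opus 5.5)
Your argument is correct, but it takes a somewhat different route from the paper. You first prove the presentation-free identity $d\vartheta=-x^\flat\wedge\varphi^*(\vartheta|_{\ab})$, with $\varphi=\ad_x|_{\ab}$ and $\varphi^*$ extended as a derivation. You do this by noting that the right-hand side is a degree-one antiderivation agreeing with $d$ on $\g^*$. You then unpack it through the four-term decomposition of $\vartheta$ along $x^\flat$, $y^\flat$ and $\Lambda\abj^*$.

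The paper instead writes $d\vartheta=x^\flat\wedge\iota_x d\vartheta$, using that $\ab$ is abelian. It applies the derivation argument only to the $\abj$-component, $(\iota_x d\vartheta)|_{\abj}=-A^*(\vartheta|_{\abj})$. It obtains the $y^\flat$-component from the contraction identity $\iota_y\iota_x d\vartheta=-\iota_{[x,y]}\vartheta+\iota_x d(\iota_y\vartheta)$. That identity produces the term $\iota_{\lambda y+v}\vartheta$ directly. So the paper never decomposes $\vartheta$ and never has to check that the extra terms of $\iota_{\lambda y+v}\vartheta$ are killed by $x^\flat\wedge y^\flat\wedge$.

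Your version costs a little more sign bookkeeping at the end. In return, its intermediate formula is cleaner and more general: it holds for any almost abelian Lie algebra and any $x\notin\ab$, without reference to $J$, the presentation, or the metric.

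One small correction: the identity $\varphi^*\delta=y^\flat\wedge\iota_v\delta+A^*\delta$ on $\Lambda\abj^*$ does not really follow from \eqref{iotabracket}. The right justification is that $\delta\mapsto y^\flat\wedge\iota_v\delta$ is an even derivation, because $\iota_v$ is an antiderivation and $y^\flat$ has odd degree. Hence both sides are derivations that agree on $\abj^*$.
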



\begin{proof}
Let $\vartheta\in\Lambda^k\g^*$ for some $k\ge0$. The formula is trivially satisfied for $k=0$, so we may assume $k\ge 1$.
Since $x^\flat$ spans a complement of $\ab^*$ in $\g^*$ and $x^\flat(x)=1$, we can write $d\vartheta=x^\flat\wedge(\iota_{x}d\vartheta)+\rest{d\vartheta}\ab$, with $\iota_{x}d\vartheta\in\Lambda^{k}\ab^*$.
Using the formula for the exterior derivative interpreted as the Chevalley–Eilenberg  differential operator,
\begin{equation}\label{ChevalleyEilenberg}
    \pt{d\vartheta}(X_1,\ldots,X_{k+1})=\sum_{1\le j<l\le k+1}(-1)^{j+l}\,\vartheta\pt{\pq{X_j,X_l},X_1,\ldots,\hat{X}_j,\hat{X}_l,\ldots,{X}_{k+1}},
\end{equation}
we see that $\rest{d\vartheta}\ab=0$, because $\ab$ is abelian.
Similarly,  $y^\flat$ spans a complement of $\abj^*$ in $\ab^*$, so  $\iota_{x}d\vartheta=y^\flat\wedge(\iota_{y}\iota_{x}d\vartheta)+\rest{\pt{\iota_{x}d\vartheta}}\abj$, with  $\iota_{y}\iota_{x}d\vartheta\in\Lambda^{k-1}\abj^*$.

Firstly, we show that $\rest{\pt{\iota_{x}d\vartheta}}\abj=-A^*\pt{\rest{\vartheta}{\abj}}$.
Since $A^*$ is defined on $\Lambda^k\abj^*$ extending the action on $\abj^*$ as a derivation, and the left-hand side is also a derivation because $\abj$ is abelian, it is enough to show this formula for $\vartheta\in\g^*$. 
If this is the case, for $Y\in\abj$,
\[
(\iota_{x}d\vartheta)(Y)=d\vartheta(x,Y)=-\vartheta(AY)=-\pt{A^*\pt{\rest{\vartheta}{\abj}}}(Y),
\]
as claimed.

Now, it remains to compute $\iota_{y}\iota_{x}d\vartheta$.
By \eqref{ChevalleyEilenberg}, 
\[
\iota_{y}\iota_{x}d\vartheta=-\iota_{[x,y]}\vartheta+\iota_{x}d(\iota_{y}\vartheta)=-\iota_{\lambda y+v}\vartheta-A^*\pt{\rest{\pt{\iota_{y}\vartheta}}{\abj}},
\]
where the last equality follows by the first part of the proof.
Combining the identities obtained above, we get
\begin{equation*}
d\vartheta=x^\flat\wedge(\iota_{x}d\vartheta)=-x^\flat\wedge y^\flat\wedge\pt{\iota_{\lambda y+v}\vartheta+A^*{\rest{\pt{\iota_{y}\vartheta}}{\abj}}}-x^\flat\wedge A^*\pt{\rest{\vartheta}{\abj}},
\end{equation*}
thus proving the statement.
\end{proof}

\begin{corollary}\label[corollary]{dkformJ}
If $(\g,\ab)$ is an almost abelian Lie algebra with complex structure $J$ and presentation $(\lambda,v,A)$ determined by $y\in\ab\setminus\abj$, then for every metric $g\in\mathcal{G}(y)$ and $\xi\in\Lambda\abj^*$, one has
\begin{equation}\label{dxi}
dx^\flat=0,\qquad dy^\flat=-\lambda x^\flat\wedge y^\flat,\qquad
d\xi=-x^\flat\wedge y^\flat\wedge{\iota_{v}\xi}-x^\flat\wedge A^*{\xi},
\end{equation}
where $x^\flat,y^\flat$ are the $g$-duals of $x:=-Jy$ and $y$.
\end{corollary}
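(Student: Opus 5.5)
The plan is to specialize Lemma \ref{dk-form} to the three kinds of forms in the statement, using the conventions fixed there. Throughout, $\Lambda\abj^*$ is identified with the forms on $\g$ that vanish whenever one argument is $x$ or $y$. Under this identification $x^\flat$ and $y^\flat$ restrict to zero on $\abj$, since $g(x,\abj)=g(y,\abj)=0$ for $g\in\mathcal G(y)$. The formula of Lemma \ref{dk-form} involves only three ingredients: $\iota_{\lambda y+v}\vartheta$, $\iota_y\vartheta$ restricted to $\abj$, and $\vartheta|_{\abj}$. So for each form we only need to compute these.

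\textbf{Case $\vartheta=x^\flat$.} We have $\iota_y x^\flat=g(x,y)=0$, and $\iota_v x^\flat=g(x,v)=0$ because $v\in\abj\subset\ab$. Also $x^\flat|_{\abj}=0$, since $x\perp\ab$. Hence every term in Lemma \ref{dk-form} vanishes, and $dx^\flat=0$. (This is also clear directly: $x^\flat$ annihilates $\ab\supset[\g,\g]$.)

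\textbf{Case $\vartheta=y^\flat$.} Here $\iota_{\lambda y+v}y^\flat=\lambda|y|^2+g(y,v)=\lambda$, using $|y|=1$ and $y\perp\abj$. Next, $\iota_y y^\flat=1$ is a $0$-form, and $A^*$ acts as zero on $\Lambda^0\abj^*=\R$, so the second term in the bracket vanishes. Finally $y^\flat|_{\abj}=0$. The lemma therefore gives $dy^\flat=-\lambda\, x^\flat\wedge y^\flat$.

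\textbf{Case $\xi\in\Lambda\abj^*$.} Viewed on $\g$, $\xi$ satisfies $\iota_y\xi=0$, so $\iota_{\lambda y+v}\xi=\iota_v\xi$, and $\xi|_{\abj}=\xi$. Substituting into Lemma \ref{dk-form} yields $d\xi=-x^\flat\wedge y^\flat\wedge\iota_v\xi-x^\flat\wedge A^*\xi$.

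The only point needing care is the identification of $\abj^*$ with the annihilator of $\mathrm{span}(x,y)$. This identification is exactly what makes $\iota_y\xi=0$ and $x^\flat|_{\abj}=y^\flat|_{\abj}=0$. It is guaranteed by the orthogonality conditions defining $\mathcal G(y)$, so no real obstacle is expected.
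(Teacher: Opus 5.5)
Your proposal is correct and is the intended argument: the paper states this as an immediate corollary of Lemma \ref{dk-form} without a separate proof. Plugging in $x^\flat$, $y^\flat$ and $\xi$ (extended by zero on $\mathrm{span}(x,y)$) and using $g(x,\ab)=0$, $g(y,\abj)=0$, $|y|=1$ and $A^*=0$ on $\Lambda^0\abj^*$ gives exactly the three formulas.
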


If we consider the complexified Lie algebra $\g_\C:=\g\otimes \C$, we obtain similar formulas on the spaces of $(p,q)$-forms $\Lambda^{p,q}\g:=\Lambda^{p,q}\g_\C^*$. 
We can then rewrite the formulas obtained in \Cref{dkformJ} for complex forms.

\begin{corollary}\label{c13}
Let $(\g,\ab)$ be an almost abelian Lie algebra with complex structure $J$ and presentation $(\lambda,v,A)$ determined by $y=Jx\in\ab\setminus\abj$.
Let $x^\flat,y^\flat$ be the $g$-duals of $x,y$ for some $g\in\mathcal{G}(y)$, and
$\alpha:=\frac12(x^\flat+i\,y^\flat)\in\Lambda^{1,0}\g$.
Then, 
\begin{equation}\label{dd}
d\alpha=\lambda\alpha\wedge\bar\alpha,\quad\quad
d\tau=-2i\alpha\wedge\bar\alpha\wedge\iota_{v}\tau-\pt{\alpha+\bar\alpha}\wedge A^*\tau,
\end{equation}
 for every $\tau\in\Lambda_\C\abj^*:=\Lambda(\abj^*\otimes\C)$.
 In particular, $d(\alpha\wedge\bar\alpha)=0.$
\end{corollary}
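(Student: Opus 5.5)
The plan is to derive both formulas in \eqref{dd} directly from \Cref{dkformJ} by complex-linear extension. We first express $x^\flat$ and $y^\flat$ through $\alpha$ and $\bar\alpha$: since $\alpha=\frac12(x^\flat+iy^\flat)$, we have $x^\flat=\alpha+\bar\alpha$ and $y^\flat=-i(\alpha-\bar\alpha)$. Consequently
\[
x^\flat\wedge y^\flat=-i(\alpha+\bar\alpha)\wedge(\alpha-\bar\alpha)=-i(-\alpha\wedge\bar\alpha+\bar\alpha\wedge\alpha)=2i\,\alpha\wedge\bar\alpha .
\]
(We also note that $\alpha$ is indeed of type $(1,0)$. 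Using $(J\phi)(Z)=-\phi(JZ)$ together with $Jx=y$, $Jy=-x$ and the fact that $x^\flat,y^\flat$ vanish on $\abj$, which is $J$-invariant, one gets $Jx^\flat=-y^\flat$; this fixes the type convention, though it is not needed for the computation.)

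For $d\alpha$ we use $dx^\flat=0$ and $dy^\flat=-\lambda x^\flat\wedge y^\flat$, so that
\[
d\alpha=\tfrac i2\,dy^\flat=-\tfrac{i\lambda}{2}\cdot 2i\,\alpha\wedge\bar\alpha=\lambda\,\alpha\wedge\bar\alpha .
\]

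For $\tau\in\Lambda_\C\abj^*$, the real formula $d\xi=-x^\flat\wedge y^\flat\wedge\iota_v\xi-x^\flat\wedge A^*\xi$ holds for real $\xi$. Both sides are $\C$-linear in $\xi$, because $d$, $\iota_v$ and $A^*$ are extended $\C$-linearly, so the same formula holds for $\tau$. Substituting the expressions above gives
\[
d\tau=-2i\,\alpha\wedge\bar\alpha\wedge\iota_v\tau-(\alpha+\bar\alpha)\wedge A^*\tau .
\]

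Finally, since $\alpha\wedge\bar\alpha=\frac{i}{2}\,x^\flat\wedge y^\flat$ up to the factor computed above, we get $d(\alpha\wedge\bar\alpha)=\frac{1}{2i}\,d(x^\flat\wedge y^\flat)=\frac{1}{2i}\bigl(dx^\flat\wedge y^\flat-x^\flat\wedge dy^\flat\bigr)$. The first term vanishes because $dx^\flat=0$, and the second equals $\lambda\, x^\flat\wedge x^\flat\wedge y^\flat=0$. Alternatively, $d(\alpha\wedge\bar\alpha)=d\alpha\wedge\bar\alpha-\alpha\wedge d\bar\alpha=\lambda\alpha\wedge\bar\alpha\wedge\bar\alpha-\lambda\alpha\wedge\alpha\wedge\bar\alpha=0$.

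There is no real obstacle here. The only point requiring care is the sign in the identity $x^\flat\wedge y^\flat=2i\,\alpha\wedge\bar\alpha$, on which both formulas depend.
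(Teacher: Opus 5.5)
Your proposal is correct and matches the paper's argument, which likewise just substitutes $x^\flat=\alpha+\bar\alpha$ and $x^\flat\wedge y^\flat=2i\,\alpha\wedge\bar\alpha$ into Corollary~\ref{dkformJ}. Two sign slips in side remarks do not affect the computation: in fact $Jx^\flat=y^\flat$ (so that $J\alpha=-i\alpha$), and $\alpha\wedge\bar\alpha=-\tfrac{i}{2}\,x^\flat\wedge y^\flat$, as your subsequent factor $\tfrac{1}{2i}$ already reflects.
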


\begin{proof}
    The statement follows immediately from the relation $x^\flat\wedge y^\flat=2i\alpha\wedge\bar\alpha.$
\end{proof}

We will use the  notations 
\[
d_v\tau=-2i\alpha\wedge\bar\alpha\wedge\iota_{v}\tau,\quad\quad
d_A\tau=-\pt{\alpha+\bar\alpha}\wedge A^*\tau,
\]
for $\tau\in\Lambda_\C\abj^*$, so we can split $\rest d\abj\colon\Lambda^{p,q}\abj\to \Lambda^{p+q+1}\g^*_\C$ as  $\rest d\abj=d_v+d_A$.

%

\medskip

By Corollary \ref{c13}, we can deduce formulas for the $J$-twisted differential operator $d^c$, defined as $d^c:=i\pt{\bar\partial-\partial}$, or, in our notations, $d^c=JdJ^{-1}$.

\begin{corollary}\label{ddcCor}
    Let $(\g,\ab)$ be an almost abelian Lie algebra with complex structure $J$ and presentation $(\lambda,v,A)$ determined by $y=Jx\in\ab\setminus\abj$.
Let $x^\flat,y^\flat$ be the $g$-duals of $x,y$, for some $g\in\mathcal{G}(y)$, and
$\alpha:=\frac12(x^\flat+i\,y^\flat)\in\Lambda^{1,0}\g$.
Then, 
\begin{equation}\label{dc}
d^c\alpha=i\lambda\alpha\wedge\bar\alpha,\quad\quad
d^c\tau=-2i\alpha\wedge\bar\alpha\wedge\iota_{Jv}\tau+i\pt{\alpha-\bar\alpha}\wedge A^*\tau,
\end{equation}
 for every $\tau\in\Lambda_\C\abj^*$.
 In particular, 
\begin{equation}\label{ddc}
dd^c\pt{\alpha\wedge\beta}=0,\quad\quad
dd^c\tau=-2i\alpha\wedge\bar\alpha\wedge\pt{\lambda A^*+A^* A^*}\tau,
\end{equation}
for all $\beta\in\Lambda\g^*_\C$, and $\tau\in\Lambda_\C\abj^*$.
\end{corollary}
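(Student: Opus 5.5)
The plan is a direct computation from Corollary \ref{c13}, using the definition $d^c=JdJ^{-1}$ together with the rules \eqref{a*} for how $J$ acts on forms.

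\textbf{Action of $J$ on the basic forms.} First I compute $J$ on the relevant generators. Since $Jx=y$, $Jy=-x$ and $J$ preserves $\abj$, for a metric $g\in\mathcal G(y)$ the formula $(J\phi)(Z)=-\phi(JZ)$ gives $Jx^\flat=y^\flat$ and $Jy^\flat=-x^\flat$. Both forms vanish on $\abj$, because $x,y$ are orthogonal to $\abj$ and $J\abj=\abj$. Hence
\[
J\alpha=-i\alpha,\qquad J\bar\alpha=i\bar\alpha,\qquad J(\alpha\wedge\bar\alpha)=\alpha\wedge\bar\alpha.
\]
On $\Lambda^{p,q}\abj$ the operator $J$ acts by a scalar, namely a power of $\pm i$ depending on $(p,q)$. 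The endomorphism $A^*$ commutes with $J$, so it preserves bidegree. Finally, contraction with $v$ changes bidegree, and $J\circ\iota_v$ is related to $\iota_{Jv}\circ J$ up to the appropriate scalar.

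\textbf{Formulas \eqref{dc}.} For $d^c\alpha$: we have $J^{-1}\alpha=i\alpha$ and $d\alpha=\lambda\alpha\wedge\bar\alpha$, which is fixed by $J$, so $d^c\alpha=i\lambda\alpha\wedge\bar\alpha$. For $\tau\in\Lambda^{p,q}\abj$, I apply \eqref{dd} to $J^{-1}\tau$ (a scalar multiple of $\tau$) and then apply $J$ to each of the two terms.
\begin{itemize}
\item In $\alpha\wedge\bar\alpha\wedge\iota_v\tau$, I split $\iota_v\tau$ into its $(p-1,q)$ and $(p,q-1)$ components. The resulting $\pm i$ factors recombine into $\iota_{Jv}\tau$, using that $\iota_{Jv}$ acts as $\pm i\,\iota_v$ on the two components.
\item In $(\alpha+\bar\alpha)\wedge A^*\tau$, the type of $A^*\tau$ is that of $\tau$. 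The extra factors from $J\alpha=-i\alpha$ and $J\bar\alpha=i\bar\alpha$ turn $\alpha+\bar\alpha$ into $i(\alpha-\bar\alpha)$.
\end{itemize}
The main source of error in this step is sign and $i$-bookkeeping. As a check I will verify it against $d^c=i(\bar\partial-\partial)$ on $1$-forms $\tau\in\Lambda^{1,0}\abj$.

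\textbf{Formulas \eqref{ddc}.} Every $\beta\in\Lambda\g^*_\C$ decomposes as $\beta_0+\alpha\wedge\beta_1+\bar\alpha\wedge\beta_2+\alpha\wedge\bar\alpha\wedge\beta_3$ with $\beta_i\in\Lambda_\C\abj^*$. Then $\alpha\wedge\beta=\alpha\wedge\beta_0+\alpha\wedge\bar\alpha\wedge\beta_2$, so it suffices to treat $\alpha\wedge\tau$ and $\alpha\wedge\bar\alpha\wedge\tau$ for $\tau\in\Lambda_\C\abj^*$.
\begin{itemize}
\item Both $d$ and $d^c$ of any $\tau\in\Lambda_\C\abj^*$ contain a factor $\alpha$ or $\bar\alpha$, by \eqref{dd} and \eqref{dc}. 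Together with $d(\alpha\wedge\bar\alpha)=0=d^c(\alpha\wedge\bar\alpha)$, this gives $d(\alpha\wedge\bar\alpha\wedge\sigma)=0$ and $d^c(\alpha\wedge\bar\alpha\wedge\sigma)=0$ for all such $\sigma$.
\item For $\alpha\wedge\tau$, the Leibniz rule and \eqref{dc} give
\[
d^c(\alpha\wedge\tau)=\alpha\wedge\bar\alpha\wedge\sigma
\]
for some $\sigma\in\Lambda_\C\abj^*$. The $\alpha\wedge\alpha$ terms vanish, and only the $\bar\alpha$ part of $d^c\tau$ survives. By the previous item, $d$ kills this, so $dd^c(\alpha\wedge\tau)=0$.
\item For $\tau$ itself, I apply $d$ to \eqref{dc}. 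The first term is of the form $\alpha\wedge\bar\alpha\wedge\sigma$, so it is closed. For $i(\alpha-\bar\alpha)\wedge A^*\tau$, I use $d\alpha=\lambda\alpha\wedge\bar\alpha$, $d\bar\alpha=-\lambda\alpha\wedge\bar\alpha$, and $dA^*\tau=-(\alpha+\bar\alpha)\wedge A^*A^*\tau+(\text{terms in }\alpha\wedge\bar\alpha)$. In the cross terms, $\alpha\wedge\alpha=0$ and $\bar\alpha\wedge\bar\alpha=0$ leave only $\alpha\wedge\bar\alpha$ contributions. Collecting them gives a multiple of $\alpha\wedge\bar\alpha\wedge(\lambda A^*+A^*A^*)\tau$, and I will fix the overall constant $-2i$ by tracking the degree signs $(-1)^{\deg}$ in the Leibniz rule.
\end{itemize}
The main obstacle is purely this sign and $i$-factor bookkeeping. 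The structural vanishing arguments are immediate.
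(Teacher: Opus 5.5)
Your plan follows the paper's proof. You obtain \eqref{dc} by applying $J$ termwise to the formulas of Corollary \ref{c13} evaluated on $J^{-1}\tau$. You then derive \eqref{ddc} from the Leibniz rule, $d\alpha=\lambda\alpha\wedge\bar\alpha$, $d\bar\alpha=-\lambda\alpha\wedge\bar\alpha$, and the closedness of every $\alpha\wedge\bar\alpha\wedge\sigma$ with $\sigma\in\Lambda_\C\abj^*$. Your explicit reduction of $dd^c(\alpha\wedge\beta)=0$ to the cases $\alpha\wedge\tau$ and $\alpha\wedge\bar\alpha\wedge\tau$ correctly spells out what the paper calls immediate.

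The bidegree splitting in the $d^c\tau$ step is a detour. The paper uses the exact identity $J(\iota_X\tau)=\iota_{JX}J\tau$ (no extra scalar, contrary to your ``up to the appropriate scalar''). Together with $JA^*=A^*J$, $J\alpha=-i\alpha$ and $J(\alpha\wedge\bar\alpha)=\alpha\wedge\bar\alpha$, this gives \eqref{dc} in one line.

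The one step that will not work as announced is fixing the final constant to be $-2i$. Doing the bookkeeping you postponed,
\[
d\bigl(i(\alpha-\bar\alpha)\wedge A^*\tau\bigr)=2i\lambda\,\alpha\wedge\bar\alpha\wedge A^*\tau+i(\alpha-\bar\alpha)\wedge(\alpha+\bar\alpha)\wedge A^*A^*\tau .
\]
Since $(\alpha-\bar\alpha)\wedge(\alpha+\bar\alpha)=2\,\alpha\wedge\bar\alpha$, this gives $dd^c\tau=2i\,\alpha\wedge\bar\alpha\wedge(\lambda A^*+A^*A^*)\tau$. That is exactly what the last line of the paper's own computation yields.

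Formula \eqref{dc} itself is correct: it agrees with $d^c=i(\bar\partial-\partial)$, as your planned check on $1$-forms will confirm. So the $-2i$ in the displayed statement is a sign misprint and cannot be proved. The misprint is harmless, because every later application (pluriclosed, Gauduchon, $p$-pluriclosed) only uses that $dd^c\tau=0$ if and only if $(\lambda A^*+A^*A^*)\tau=0$. Still, you should state and prove the formula with $+2i$ rather than aim for $-2i$.
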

\begin{proof}
    Using that $J\alpha=-i\alpha$, we obtain from \eqref{dd}:
    \[
d^c\alpha=JdJ^{-1}\alpha=iJd\alpha=iJ(\lambda\alpha\wedge\bar\alpha)=i\lambda\alpha\wedge\bar\alpha.
    \]
Since $J$ commutes with $A^*$ and satisfies $J(\iota_X\tau)=\iota_{JX} J\tau$, for every $X\in\abj^\C:=\abj\otimes \C$ and $\tau\in \Lambda_\C\abj^*$,
    \begin{eqnarray*}
d^c\tau&=&J(-2i\alpha\wedge\bar\alpha\wedge\iota_{v}(J^{-1}\tau)-\pt{\alpha+\bar\alpha}\wedge A^*(J^{-1}\tau))\\
&=&-2i\alpha\wedge\bar\alpha\wedge\iota_{Jv}\tau+i\pt{\alpha-\bar\alpha}\wedge A^*\tau,
    \end{eqnarray*}
    thus proving \eqref{dc}. 
    The first part of \eqref{ddc} follows then immediately from \eqref{dd} together with \eqref{dc}. 
    For the second part of \eqref{ddc}, using the second part of \eqref{dc} and \eqref{dd} we compute:
    \begin{eqnarray*}
    dd^c\tau&=&d(-2i\alpha\wedge\bar\alpha\wedge\iota_{Jv}\tau+i\pt{\alpha-\bar\alpha}\wedge A^*\tau)\\
    &=&id(\alpha-\bar\alpha)\wedge A^*\tau-i\pt{\alpha-\bar\alpha}\wedge d(A^*\tau)\\
    &=&2i(\alpha\wedge\bar\alpha)\wedge A^*\tau+i\pt{\alpha-\bar\alpha}\wedge\pt{\alpha+\bar\alpha}\wedge A^*A^*\tau\\
    &=&2i\alpha\wedge\bar\alpha\wedge\pt{\lambda A^*+A^* A^*}\tau.
    \end{eqnarray*}
\end{proof}

In what follows, it will be useful to have an explicit expression for the action of $A^*$ and $A^*A^*$ on Hermitian 2-forms.
For this, we will use the identification between endomorphisms and bilinear forms on a Euclidean vector space $(V,h)$, as follows.
For every endomorphism $M$ of $V$, the associated $h$-dual bilinear form is $M^\flat(\cdot,\cdot):=h(M\cdot,\cdot)$.
Then, we have:

\begin{lemma}\label{dualEnd}
Let $V$ be a real vector space  with a Hermitian structure $(J,h,\omega)$, and let $A$ be an endomorphism of $V$ commuting with $J$. 
Then,
\begin{equation}\label{ao}
    A^*\omega=\pt{2A_0J}^\flat,
\end{equation}
and
\begin{equation}\label{aao}
    A^*A^*\omega=\pt{2\pt{2A_0^2+\pq{A_0,A_1}}J}^\flat.
\end{equation}
where $A_0$ denotes the $h$-symmetric part of $A$ and $A_1$ the $h$-skew-symmetric part of $A$.
\end{lemma}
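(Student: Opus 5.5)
The plan is a direct computation. It rests on a general formula for how $A^*$, extended as a derivation, acts on a bilinear form written as $M^\flat$. We fix the convention $\omega=J^\flat$, i.e. $\omega(X,Y)=h(JX,Y)$, consistent with the identification $M^\flat(\cdot,\cdot)=h(M\cdot,\cdot)$ introduced just before the statement. With the opposite convention every formula changes sign on both sides, so the result is unaffected.

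\emph{Step 1: a general rule.} Let $\beta=M^\flat$ be any bilinear form. Since $A^*$ acts on $2$-forms (and, by the same formula, on bilinear forms) as a derivation,
\[
(A^*\beta)(X,Y)=\beta(AX,Y)+\beta(X,AY)=h(MAX,Y)+h(MX,AY)=h\bigl((MA+A^TM)X,Y\bigr),
\]
where $A^T$ is the $h$-transpose. Hence $A^*(M^\flat)=(MA+A^TM)^\flat$.

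\emph{Step 2: algebraic facts.} Write $A=A_0+A_1$, so that $A^T=A_0-A_1$. Because $J$ is $h$-orthogonal and skew ($J^T=-J=J^{-1}$), transposing $AJ=JA$ gives $A^TJ=JA^T$. Taking the sum and the difference of these two identities, both $A_0$ and $A_1$ commute with $J$.

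\emph{Step 3: the first formula.} Apply Step 1 with $M=J$:
\[
A^*\omega=(JA+A^TJ)^\flat .
\]
By Step 2,
\[
JA+A^TJ=J(A_0+A_1)+(A_0-A_1)J=2A_0J,
\]
since $JA_1=A_1J$. This proves \eqref{ao}.

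\emph{Step 4: the second formula.} Apply Step 1 again, now with $M=B:=2A_0J$:
\[
A^*A^*\omega=(BA+A^TB)^\flat .
\]
Using again that $J$ commutes with $A_0$ and $A_1$,
\[
BA+A^TB=2\bigl(A_0^2+A_0A_1+A_0^2-A_1A_0\bigr)J=2\bigl(2A_0^2+[A_0,A_1]\bigr)J .
\]
This is \eqref{aao}.

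The only delicate point is bookkeeping. One must keep the sign conventions for $\omega$ and $M^\flat$ consistent, and check that extending $A^*$ as a derivation to arbitrary (not necessarily skew) bilinear forms is legitimate. Here $2A_0J$ is in fact skew, since $A_0$ is symmetric and commutes with the skew $J$. So $A^*\omega$ is again a $2$-form, and Step 1 applies to it verbatim.
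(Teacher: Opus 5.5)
Your proposal is correct and follows essentially the same route as the paper: a direct computation resting on the fact that $A_0$ and $A_1$ commute with $J$. The only difference is cosmetic. You first isolate the rule $A^*(M^\flat)=(MA+A^TM)^\flat$ and apply it with $M=J$ and then $M=2A_0J$, whereas the paper evaluates $(A^*\omega)(X,Y)$ and $(A^*A^*\omega)(X,Y)$ directly.
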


\begin{proof}
Since $A$ commutes with $J$, which is $h$-skew-symmetric, $A_0$ and $A_1$ also commute with $J$. We thus have, for every $X,Y\in V$:
\begin{eqnarray*}
(A^*\omega)(X,Y)&=&h(JAX,Y)+h(JX,AY)=h(JAX,Y)+h((A_0-A_1)JX,Y)\\&=&2h(A_0JX,Y),
\end{eqnarray*}
thus proving \eqref{ao}.
Using this,
\begin{eqnarray*}
(A^*A^*\omega)(X,Y)&=&A^*\omega(AX,Y)+A^*\omega(X,AY)=h(2A_0JAX,Y)+h(2A_0JX,AY)\\&=&h((2A_0^2J+2A_0A_1J)X,Y)+h(2(A_0-A_1)A_0JX,Y)\\
&=&h((4A_0^2J+2[A_0,A_1]J)X,Y),
\end{eqnarray*}
which proves \eqref{aao}.
\end{proof}

\section{\texorpdfstring{$p$}{p}-Kähler structures}\label{secpK}

Let $(V,J)$ be a $2n$-dimensional real vector space endowed with a complex structure.
For every basis $\gamma_1,\dots,\gamma_n$ of $\Lambda^{1,0}V^*$, we define the real $(n,n)$-form
\begin{equation}\label{vol}
    \vol=(i\gamma_1\wedge\bar\gamma_1)\wedge\dots\wedge (i\gamma_{n}\wedge\bar\gamma_{n}).
\end{equation}
Any real positive multiple of $\vol$ in $\Lambda^{n,n}V^*$ will be called a \textit{positive volume form} on $V$.
Note that if a different basis of $\Lambda^{1,0}V^*$ is chosen, then the $(n,n)$-form obtained as in \eqref{vol} is still a positive volume form on $V$, so the definition does not depend on the choice of the basis.

\begin{definition}
    Let $(V,J)$ be a $2n$-dimensional real vector space endowed with a complex structure.
    A (real) $(p,p)$-form $\Omega$ is called \textit{strictly weakly positive}, or \textit{transverse} if for every $J$-invariant subspace $W$ of $V$ of real dimension $2p$, the restriction $\Omega|_W$ is a positive volume form.
\end{definition}

An equivalent formulation is that, for all $\gamma_1,\dots,\gamma_{n-p}\in\Lambda^{1,0}V^*$ with $\gamma_1,\dots,\gamma_{n-p}\neq0$, 
\begin{equation*}
    \Omega\wedge (i\gamma_1\wedge\bar\gamma_1)\wedge\dots\wedge (i\gamma_{n-p}\wedge\bar\gamma_{n-p})
\end{equation*}
is a positive volume form on $V$.

\begin{definition}
    Let $J$ be a complex structure on a $2n$-dimensional real Lie algebra $\g$.
    For $1\le p\le n$, a \textit{$p$-Kähler structure} on $(\g,J)$ is a closed and transverse real $(p,p)$-form.
\end{definition}

Note that this definition is new only for $2\leq p\leq n-2$. 
Indeed, a $p$-Kähler structure is just a Kähler metric for $p=1$, a real volume form for $p=n$, whereas for $p=n-1$ we recover $(n-1)$-th powers of balanced metrics by \cite[Equation (4.8)]{michelsohn}. 
When $2\leq p\leq n-2$, we will refer to {\em genuine} $p$-Kähler structures.

In the next subsections we will consider separately the cases of $p$-Kähler structures for $p=1$ (Kähler), for $p=n-1$ (balanced), and for $2\leq p\leq n-2$ (genuine).

\subsection{Kähler metrics}

We recall the following result (first obtained in \cite{LR,fp21}): 
\begin{lemma}\label{l14}
    On an almost abelian Lie algebra $(\g,\ab)$ with complex structure $J$ and presentation $(\lambda,v,A)$ determined by $y\in\ab\setminus\abj$, a Hermitian metric $g\in\mathcal{G}(y)$ is Kähler if and only if $v=0$ and $A$ is skew-symmetric with respect to $g|_{\abj}$.
\end{lemma}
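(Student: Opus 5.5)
The plan is to compute $d\omega$ explicitly for the fundamental form of $g\in\mathcal{G}(y)$ and read off when it vanishes. With $\alpha=\frac12(x^\flat+iy^\flat)$, the metric $g$ makes $x,y$ orthonormal and orthogonal to $\abj$. Since $Jx=y$, its fundamental form $\omega=g(J\cdot,\cdot)$ splits as
\[
\omega=c\,x^\flat\wedge y^\flat+\omega_{\abj}=2ic\,\alpha\wedge\bar\alpha+\omega_{\abj},
\]
where $c=\pm1$ is a sign fixed by the convention $\omega=g(J\cdot,\cdot)$ and will be irrelevant. Here $\omega_{\abj}\in\Lambda^2\abj^*$ is the fundamental form of $g|_{\abj}$, viewed as a form on $\g$ vanishing on $x,y$ (via the splitting $\g^*=\langle x^\flat,y^\flat\rangle\oplus\abj^*$ induced by $g$).

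The first step is to use Corollary \ref{c13}, which gives $d(\alpha\wedge\bar\alpha)=0$, so the first term does not contribute. Applying the same corollary with $\tau=\omega_{\abj}$ gives
\[
d\omega=-2i\,\alpha\wedge\bar\alpha\wedge\iota_v\omega_{\abj}-(\alpha+\bar\alpha)\wedge A^*\omega_{\abj}.
\]
Rewritten in real terms this is $-x^\flat\wedge y^\flat\wedge\iota_v\omega_{\abj}-x^\flat\wedge A^*\omega_{\abj}$, as in Corollary \ref{dkformJ}.

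The second step is to separate the two pieces. The first lies in $x^\flat\wedge y^\flat\wedge\Lambda^1\abj^*$ and the second in $x^\flat\wedge\Lambda^2\abj^*$. These subspaces of $\Lambda^3\g^*$ are complementary, so $d\omega=0$ if and only if $\iota_v\omega_{\abj}=0$ and $A^*\omega_{\abj}=0$. Since $\omega_{\abj}$ is nondegenerate on $\abj$, the first condition is equivalent to $v=0$. For the second, Lemma \ref{dualEnd} gives $A^*\omega_{\abj}=(2A_0J)^\flat$. Since $J$ is invertible and $\flat$ is an isomorphism, this vanishes if and only if $A_0=0$, that is, $A$ is $g|_{\abj}$-skew-symmetric.

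There is no real obstacle. The only points needing care are the sign and normalization conventions in writing $\omega$, which do not affect the conclusion, and checking that the decomposition of $d\omega$ into the two components is direct.
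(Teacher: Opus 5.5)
Your proposal is correct and follows the paper's proof essentially step for step. You decompose $\omega=x^\flat\wedge y^\flat+\omega_0$ (your sign $c$ is in fact $+1$) and use $d(\alpha\wedge\bar\alpha)=0$ from Corollary \ref{c13}. Separating the two components of $d\omega_0$ gives $\iota_v\omega_0=0$ and $A^*\omega_0=0$, which you then translate via nondegeneracy and Lemma \ref{dualEnd}, exactly as the paper does.
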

\begin{proof}
Let $\alpha:=\frac12(x^\flat+i\,y^\flat)\in\Lambda^{1,0}\g$. 
    Since $g(y,\abj)=0$, the fundamental form of $g$, defined by $\omega:=g(J\cdot,\cdot)$ can be  written as
    \[\omega=x^\flat\wedge y^\flat+\omega_0=2i\alpha\wedge\bar\alpha+\omega_0\] 
    where  $\omega_0\in \Lambda^{1,1}\abj$ is  the fundamental form of $g|_{\abj}$.
  By Corollary \ref{c13}, we thus have
    \[
d\omega=d\omega_0=-2i\alpha\wedge\bar\alpha\wedge\iota_{v}\omega_0-\pt{\alpha+\bar\alpha}\wedge A^*\omega_0.
    \]
    Comparing types, this shows that $\omega$ is closed if and only if $\iota_{v}\omega_0=0$ and $A^*\omega_0=0$. 
    The first condition is clearly equivalent to $v=0$, whereas by Lemma \ref{dualEnd}, $A^*\omega_0=0$ if and only if $A$ is skew-symmetric with respect to $g|_{\abj}$.
\end{proof}

We will now give a general criterion for an almost abelian Lie algebra $(\g,\ab)$ with complex structure $J$ to admit Kähler metrics in terms of a presentation $(\lambda,v,A)$. 

\begin{proposition}\label{lemma17}
    Let $J$ be a complex structure on an almost abelian Lie algebra $(\g,\ab)$ with presentation $(\lambda,v,A)$. 
    If $\g$ is non-unimodular, then there exists a Kähler metric on $(\g,J)$ if and only if $A$ is diagonalizable over $\C$ and has purely imaginary spectrum. If $\g$ is unimodular, then there exists a Kähler metric on $(\g,J)$ if and only if $A$ is diagonalizable over $\C$, has purely imaginary spectrum, and $v\in A(\abj)$.
\end{proposition}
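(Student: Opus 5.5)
The plan is to reduce to Lemma \ref{l14} by choosing the vector $y$ defining the presentation. Any Hermitian metric $g$ on $\g$ lies, after rescaling, in $\mathcal{G}(y)$ for a suitable $y$. Indeed, take $y$ to be the unit vector in $\ab$ that is $g$-orthogonal to $\abj$. Then $x=-Jy$ is orthogonal to $\ab$ with norm $1$, so $g$ is determined by $g|_{\abj}$ as in the definition of $\mathcal{G}(y)$. Consequently $(\g,J)$ is Kähler if and only if there exist $y\in\ab\setminus\abj$ and a Hermitian metric on $\abj$ such that, for the presentation $(\tilde\lambda,\tilde v,\tilde A)$ determined by $y$, we have $\tilde v=0$ and $\tilde A$ skew-symmetric.

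By \eqref{changeTilde}, passing from the given presentation $(\lambda,v,A)$ to the one determined by $\tilde y=cy+a$ gives $\tilde A=cA$ and $\tilde v=c(cv+(A-\lambda\id)a)$. So the conditions become the following.
\begin{enumerate}[(a)]
\item There exist $c\neq0$ and $a\in\abj$ with $cv+(A-\lambda\id)a=0$. Equivalently, after replacing $a$ by $a/c$, we need $v\in(A-\lambda\id)(\abj)$.
\item $A$ is skew-symmetric for some $J$-Hermitian inner product on $\abj$.
\end{enumerate}

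For (b), I will show the equivalence with $A$ being diagonalizable over $\C$ with purely imaginary spectrum. A skew-symmetric endomorphism is normal, so it is diagonalizable with imaginary eigenvalues. Conversely, suppose $A$ commutes with $J$, is diagonalizable over $\C$, and has imaginary spectrum. View $\abj$ as a complex vector space via $J$, so that $A$ is $\C$-linear. The real Jordan decomposition then splits $\abj$ into $J$-invariant and $A$-invariant pieces: the kernel of $A$, and, for each $\mu>0$, the space $\ker(A^2+\mu^2)$. On each piece $A$ generates a compact torus action commuting with $J$. Averaging any $J$-Hermitian metric over the closure of $\{e^{tA}\}$, which is compact because the spectrum is imaginary and $A$ is semisimple, produces a $J$-Hermitian metric for which $A$ is skew. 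This avoids having to do explicit block bookkeeping with $J$.

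It remains to handle (a) in the two cases. Under (b), every eigenvalue of $A$ is imaginary.
\begin{itemize}
\item If $\g$ is non-unimodular, Remark \ref{unimod} gives $\lambda\neq-\tr A$. Since $\tr A=0$ (the eigenvalues are imaginary and $A$ is real), this means $\lambda\neq0$. Then $\lambda$ is not an eigenvalue of $A$, so $A-\lambda\id$ is invertible and (a) holds automatically.
\item If $\g$ is unimodular, then $\lambda=-\tr A=0$, and (a) reads $v\in A(\abj)$.
\end{itemize}

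The only slightly delicate step is the averaging argument in (b), in particular checking that the averaged metric is still $J$-Hermitian. This holds because $e^{tA}$ commutes with $J$. Everything else is direct substitution into \eqref{changeTilde} and Lemma \ref{l14}.
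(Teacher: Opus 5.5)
Your proposal is correct and follows the same route as the paper: you reduce to Lemma~\ref{l14} through the change-of-presentation formula \eqref{changeTilde}, and then split according to whether $\lambda=0$, which happens exactly in the unimodular case once $\tr A=0$. The only addition is your averaging argument producing a $J$-Hermitian metric for which $A$ is skew. The paper simply asserts this step; one could equally declare an eigenbasis of $A$ in $(\abj^\C)^{1,0}$ to be unitary, as is done in the proof of Proposition~\ref{propSKT}.
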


\begin{proof}
Let $(\lambda,v,A)$ be determined by $y\in\ab\setminus\abj$.
Assume first that $g$ is a Kähler metric on $(\g,J)$ and let $\tilde y\in\ab\setminus\abj$ be the vector, unique up to sign, such that $g\in\mathcal{G}(\tilde y)$. Write $\tilde y=c y+a$ for some non-zero $c\in\R^*$ and $a\in\abj$, and let $(\tilde\lambda,\tilde v,\tilde A)$ be defined by \eqref{changeTilde}. 
By Lemma \ref{l14}, since $g$ is Kähler, we have $\tilde v=0$ and $\tilde A$ is skew-symmetric with respect to $g|_{\abj}$. By \eqref{changeTilde}, this means that $A$ is skew-symmetric as well with respect to $g|_{\abj}$ and $cv+(A-\lambda \id )a=0$. 
In particular, $A$ is diagonalizable over $\C$ and has purely imaginary spectrum.
Furthermore, if $\g$ is unimodular, then $\lambda=0$ by Remark \ref{unimod}, whence $v\in A(\abj)$ by the previous equation.

Conversely, assume that $A$ is diagonalizable over $\C$, has purely imaginary spectrum, and that moreover $v\in A(\abj)$, in the case where $\g$ is unimodular. Since $\tr(A)$ is real and imaginary in the same time, it has to vanish, so by Remark \ref{unimod}, $\g$ is unimodular if and only if $\lambda=0$. 
The assumption on $A$ ensures the existence of a Hermitian metric $g_0$ on $\abj$ such that $A$ is skew-symmetric with respect to $g_0$.
We also claim that there exists $a\in \abj$ such that $v+(A-\lambda \id )a=0$.
Indeed, when $\lambda\neq0$ this simply follows from the fact that $A-\lambda \id $ is invertible, and when $\lambda=0$, this follows from the assumption $v\in A(\abj)$, see also \cite[Remark 4.4]{fm}. 
Then, the vector $\tilde y\in\ab\setminus\abj$ defined by $\tilde y:=y+a$ determines a unique Hermitian metric $g\in\mathcal{G}(\tilde y)$ on $(\g,J)$ with $\rest{g}{\abj}=g_0$, and $g$ is Kähler by Lemma \ref{l14}.
\end{proof}

\begin{corollary}\label[corollary]{vinKer}
Let $J$ be a complex structure on a unimodular almost abelian Lie algebra $(\g,\ab)$ such that $(\g,J)$ does not admit $J$-compatible Kähler metrics.
Assume that $(\g,\ab,J)$ admits a presentation $(\lambda,v,A)$ such that $A$ is $g_0$-skew-symmetric for some metric $g_0$ on $\abj$.
Then, there exists a presentation $(0,\tilde v,A)$ with $0\neq \tilde v\in\ker(A)$.
In particular, $\det(A)=0$ for every presentation.
\end{corollary}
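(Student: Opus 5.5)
Since $\g$ is unimodular, Remark \ref{unimod} gives $\lambda+\tr(A)=0$. Because $A$ is $g_0$-skew-symmetric, its trace vanishes, so $\lambda=0$. The skew-symmetry of $A$ also makes $A$ diagonalizable over $\C$ with purely imaginary spectrum. The plan is to combine this with Proposition \ref{lemma17}: since $(\g,J)$ carries no Kähler metric while the conditions on $A$ hold, Proposition \ref{lemma17} forces $v\notin A(\abj)$.

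Next I use the $g_0$-orthogonal decomposition $\abj=A(\abj)\oplus\ker(A)$, which holds because $A$ is skew-symmetric: the image of $A$ is the orthogonal complement of $\ker A^{T}=\ker A$. Write $v=Ab+w$ with $b\in\abj$ and $w\in\ker(A)$; since $v\notin A(\abj)$, we have $w\neq0$. Now apply the change of presentation \eqref{changeTilde} with $c=1$, $a=-b$, and $\lambda=0$. This gives $\tilde\lambda=0$, $\tilde A=A$ and $\tilde v=v-Ab=w$. So $(0,\tilde v,A)$ with $\tilde v:=w$ is the required presentation, and $0\neq\tilde v\in\ker(A)$.

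For the final claim, $\ker(A)\neq0$ gives $\det(A)=0$. By \eqref{changeTilde}, any other presentation has $\tilde A=cA$ with $c\neq0$, so its determinant is $c^{2(n-1)}\det(A)=0$ as well.

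There is no real obstacle. The only step needing care is the decomposition $\abj=A(\abj)\oplus\ker A$, which relies on skew-symmetry (equivalently, on semisimplicity of $A$).
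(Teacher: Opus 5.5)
Your proof is correct and matches the paper's argument: the paper chooses $a$ with $Av=-A^2a$ using $A(\abj)=A^2(\abj)$, which is equivalent to your splitting $\abj=A(\abj)\oplus\ker A$. It then shifts $y$ to $y+a$ and uses Proposition~\ref{lemma17} to get $\tilde v\neq 0$, exactly as you do.
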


\begin{proof}
Let $(\lambda,v,A)$ be determined by $y\in\ab\setminus\abj$. Because $\g$ is unimodular and $A$ is $g_0$-skew-symmetric, we have $\lambda=0$ by Remark \ref{unimod}. 
Since the image of $A$ is the same as the image of $A^2$, there exists $a\in\abj$ such that $Av=-A^2a$. By \eqref{changeTilde}, the presentation $(\tilde \lambda,\tilde v,\tilde A)$ determined by $\tilde y:=y+a$ satisfies 
$\tilde\lambda=0$, $\tilde v={v+{A}a}$, $\tilde A=A$. Finally, Proposition \ref{lemma17} shows that the vector $v$ is not in $A(\abj)$, so $\tilde v\neq 0$, but $\tilde A\tilde v=0$ by the choice of $a$. 
\end{proof}

\subsection{Balanced metrics}

Like in the case of Kähler metrics, we will first recall the known criterion for a Hermitian metric being balanced in terms of the presentation that it determines.

\begin{lemma}[{\cite[Theorem 3.1]{fp23}}]\label{l44}
    On an almost abelian Lie algebra $(\g,\ab)$ with complex structure $J$ and presentation $(\lambda,v,A)$ determined by $y\in\ab\setminus\abj$, a Hermitian metric $g\in\mathcal{G}(y)$ is balanced if and only if $v=0$ and $\tr A=0$.
    In particular, if this is the case, every metric in $\mathcal{G}(y)$ is balanced.
\end{lemma}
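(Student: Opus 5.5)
The plan is to compute $d(\omega^{n-1})$ directly with Corollary \ref{c13}. As in the proof of Lemma \ref{l14}, I would write the fundamental form of $g\in\mathcal{G}(y)$ as $\omega=2i\alpha\wedge\bar\alpha+\omega_0$, where $\omega_0\in\Lambda^{1,1}\abj$ is the fundamental form of $g|_{\abj}$. Because $(\alpha\wedge\bar\alpha)^2=0$ and $\omega_0^{n}=0$ on the $(2n-2)$-dimensional space $\abj$, we have
\[
\omega^{n-1}=(n-1)\,2i\,\alpha\wedge\bar\alpha\wedge\omega_0^{n-2}+\omega_0^{n-1}.
\]
By Corollary \ref{c13}, $d(\alpha\wedge\bar\alpha)=0$. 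Every term of $d\omega_0^{n-2}$ contains $\alpha$ or $\bar\alpha$, and $d_v$ contains $\alpha\wedge\bar\alpha$. It follows that $d(\alpha\wedge\bar\alpha\wedge\omega_0^{n-2})=\alpha\wedge\bar\alpha\wedge d_A\omega_0^{n-2}$, and this vanishes because $d_A$ introduces $\alpha+\bar\alpha$.

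Everything therefore reduces to the single term
\[
d(\omega_0^{n-1})=-2i\,\alpha\wedge\bar\alpha\wedge\iota_v\omega_0^{n-1}-(\alpha+\bar\alpha)\wedge A^*\omega_0^{n-1}.
\]
The two summands have different bidegree in the $\alpha$-directions: the first contains $\alpha\wedge\bar\alpha$, the second contains only $\alpha+\bar\alpha$, and both have coefficients in $\Lambda\abj^*$. Comparing types as in Lemma \ref{l14}, $d\omega^{n-1}=0$ holds if and only if $\iota_v\omega_0^{n-1}=0$ and $A^*\omega_0^{n-1}=0$.

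For the first condition, $\iota_v\omega_0^{n-1}=(n-1)\,\iota_v\omega_0\wedge\omega_0^{n-2}$. The map $\beta\mapsto\beta\wedge\omega_0^{n-2}$ is injective on $1$-forms of $\abj$ by Lefschetz, since $\abj$ has complex dimension $n-1$. Hence this condition holds exactly when $\iota_v\omega_0=0$, that is, when $v=0$.

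For the second condition, $\omega_0^{n-1}$ is a top-degree form, equal to a constant multiple of the volume form of $\abj$. A derivation extending $A^*$ acts on top forms by multiplication by $\tr A$, so $A^*\omega_0^{n-1}=(\tr A)\,\omega_0^{n-1}$. This vanishes if and only if $\tr A=0$.

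Neither condition depends on $g|_{\abj}$, which gives the final assertion that every metric in $\mathcal{G}(y)$ is then balanced. The case $n=2$ should be checked separately, with $\omega_0^{0}=1$, and it works the same way. The only step needing care is the Lefschetz injectivity and the degree bookkeeping; I do not expect a genuine obstacle anywhere.
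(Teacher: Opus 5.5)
Your proof is correct and follows the paper's argument. You reduce $d(\omega^{n-1})$ to $d(\omega_0^{n-1})$ via Corollary~\ref{c13}, separate the $x^\flat\wedge y^\flat$ and $x^\flat$ components, and read off $v=0$ and $\tr A=0$ from $\omega_0^{n-1}$ being a volume form on $\abj$; your Lefschetz step for $\iota_v\omega_0^{n-1}=0$ works, but the paper's simpler observation that contracting a nonzero vector into a volume form never gives zero already suffices.
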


\begin{proof}
    As above, we write the fundamental form of $g\in\mathcal{G}(y)$, $\omega:=g(J\cdot,\cdot)$ as
    \[\omega=x^\flat\wedge y^\flat+\omega_0=2i\alpha\wedge\bar\alpha+\omega_0\] 
    with $\omega_0\in \Lambda^{1,1}\abj$. By Corollary \ref{c13} we thus have
    \begin{equation}\label{domegan-1}
d(\omega^{n-1})=d(\omega_0^{n-1})=-2i\alpha\wedge\bar\alpha\wedge\iota_{v}(\omega_0^{n-1})-\pt{\alpha+\bar\alpha}\wedge A^*(\omega_0^{n-1}).
    \end{equation}
    Comparing types, this shows that $\omega^{n-1}$ is closed if and only if $\iota_{v}(\omega_0^{n-1})=0$ and $A^*(\omega_0^{n-1})=0$. Since $\omega_0^{n-1}$ is a volume form of $\abj$, the first condition is equivalent to $v=0$, whereas the second condition is equivalent to $\tr A=0$.
\end{proof}

We now give a criterion for the existence of balanced metrics which only depends on some arbitrary presentation.

\begin{proposition}\label{37}
    Let $J$ be a complex structure on an almost abelian Lie algebra $(\g,\ab)$ with presentation $(\lambda,v,A)$. Then, there exists a balanced metric on $(\g,J)$ if and only if $\tr A=0$ and $v$ belongs to the image of $A-\lambda \id $. 
\end{proposition}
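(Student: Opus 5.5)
The argument mirrors the proof of Proposition \ref{lemma17}, with Lemma \ref{l44} in place of Lemma \ref{l14}. Fix a presentation $(\lambda,v,A)$ determined by $y\in\ab\setminus\abj$.

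For the forward direction, suppose $g$ is a balanced metric on $(\g,J)$. Let $\tilde y\in\ab\setminus\abj$ be a vector, unique up to sign, with $g\in\mathcal{G}(\tilde y)$. Write $\tilde y=cy+a$ with $c\in\R^*$ and $a\in\abj$. By Lemma \ref{l44}, the presentation $(\tilde\lambda,\tilde v,\tilde A)$ determined by $\tilde y$ satisfies $\tilde v=0$ and $\tr\tilde A=0$. The change-of-presentation formulas \eqref{changeTilde} give $\tilde A=cA$, so $\tr A=0$. They also give $0=\tilde v=c\pt{cv+(A-\lambda\id)a}$, hence $v=-(A-\lambda\id)(a/c)$, which lies in the image of $A-\lambda\id$.

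For the converse, assume $\tr A=0$ and $v=(A-\lambda\id)b$ for some $b\in\abj$. Set $a:=-b$ and $\tilde y:=y+a\in\ab\setminus\abj$, i.e.\ take $c=1$. By \eqref{changeTilde}, the presentation determined by $\tilde y$ is $(\lambda,\,v+(A-\lambda\id)a,\,A)=(\lambda,0,A)$. Choose any Hermitian metric $g_0$ on $\abj$, and let $g\in\mathcal{G}(\tilde y)$ be its extension. Since $\tilde v=0$ and $\tr\tilde A=\tr A=0$, Lemma \ref{l44} shows that $g$ is balanced.

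There is no real obstacle here. Unlike the Kähler case, no metric condition on $A$ appears, so any $g_0$ works. The only point to check is that the condition "$v\in\operatorname{Im}(A-\lambda\id)$" does not depend on the chosen presentation, and this follows directly from \eqref{changeTilde}.
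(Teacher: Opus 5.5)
Your proof is correct and follows the paper's argument essentially verbatim: in the forward direction you pass to the presentation determined by the vector $\tilde y$ adapted to the balanced metric and apply Lemma \ref{l44} together with \eqref{changeTilde}, and in the converse you shift $y$ by $a$ to kill $v$ and apply Lemma \ref{l44} to any metric in $\mathcal{G}(\tilde y)$. Your closing remark about the condition on $v$ being independent of the presentation is true but is not needed, since the argument works directly with the given presentation.
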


\begin{proof}
Let $(\lambda,v,A)$ be determined by $y\in\ab\setminus\abj$.
Assume first that $g$ is a balanced metric on $(\g,J)$ and let $\tilde y\in\ab\setminus\abj$ be the vector, unique up to sign, such that $g\in\mathcal{G}(\tilde y)$. 
Write $\tilde y=c y+a$ for some $c\in\R^*$ and $a\in\abj$, and let $(\tilde\lambda,\tilde v,\tilde A)$ be defined by \eqref{changeTilde}. 
By Lemma \ref{l44}, since $g$ is balanced we have $\tilde v=0$ and $\tr\tilde A=0$. 
By \eqref{changeTilde}, this means that $\tr A=0$ and $cv+(A-\lambda \id )a=0$, whence $v\in (A-\lambda \id )(\abj)$.

Conversely, assume that $\tr A=0$, and that $v\in (A-\lambda \id )(\abj)$, so there exists $a\in \abj$ such that $v+(A-\lambda \id )a=0$.
Then, the presentation $(\tilde\lambda,\tilde v,\tilde A)$ determined by the vector $\tilde y:=y+a\in\ab\setminus\abj$ satisfies $\tilde v=0$ and $\tr \tilde A=0$ by \eqref{changeTilde}. Consequently, every Hermitian metric $g\in\mathcal{G}(\tilde y)$ on $(\g,J)$ is balanced by Lemma \ref{l44}.
\end{proof}

\subsection{Genuine \texorpdfstring{$p$}{p}-Kähler structures}

Let $(\g,\ab)$ be an almost abelian Lie algebra of real dimension $2n$ with complex structure $J$, and $\Omega$ a real $(p,p)$-form on $(\g,J)$. For every presentation $(\lambda,v,A)$ defined by $y=Jx\in\ab\setminus \abj$, and for every $g\in\mathcal{G}(y)$, we define the $(1,0)$-vector $X:=x-iy$, and the $(1,0)$-form $\alpha:=\frac12(x^\flat+iy^\flat)\in\Lambda^{1,0}\g$ as above.
We can write
\begin{equation}\label{dOmega}
\Omega=\Omega_J+\alpha\wedge\eta+\bar\alpha\wedge\bar\eta+i\,\alpha\wedge\bar\alpha\wedge\Phi,    
\end{equation}
with $\Omega_J:=\rest\Omega\abj\in\Lambda^{p,p}\abj$, $\eta:=\rest{\iota_{X}\Omega}\abj\in\Lambda^{p-1,p}\abj$, and $\Phi:=i\,\iota_{ X}\iota_{\bar X}\Omega\in\Lambda^{p-1,p-1}\abj$.
By \cite[Theorem 3.4]{FagMai}, if $\Omega$ is transverse, so are $\Omega_J$ and $\Phi$.
Using Corollary \ref{c13}, we then compute
\begin{eqnarray*}
    d\Omega&=&d\Omega_J+{\lambda}\alpha\wedge\bar\alpha\wedge\pt{\eta-\bar\eta}-\alpha\wedge \da \eta-\bar\alpha\wedge\da\bar\eta\\
            &=&-\pt{\alpha+\bar\alpha}\wedge A^*\Omega_J-i\alpha\wedge\bar\alpha\wedge\pt{
                2\iota_v\Omega_J+i\pt{\lambda \id +A^*}\pt{\eta-\bar\eta}
                },
\end{eqnarray*}
where $\id $ is the identity on $\Lambda_\C\abj$.
This shows that $\Omega$ is closed if and only if 
\begin{equation}\label{domega=0}
        A^*\Omega_J=0,\quad\quad 2\iota_v\Omega_J=-i\pt{\lambda \id +A^*}\pt{\eta-\bar\eta}.
    \end{equation}
We can now prove a preliminary result, stating that an almost abelian algebra with complex structure admitting a genuine $p$-Kähler structure also admits a Kähler metric, provided that the endomorphism $A$ of $\abj$ appearing in some presentation is diagonalizable over $\C$.

\begin{proposition}\label{AdiagK}
    Let $(\g,\ab)$ be an almost abelian Lie algebra of real dimension $2n$ endowed with a complex structure $J$, with a presentation $(\lambda,v,A)$.
    Assume that $A$ is diagonalizable over $\C$. Then, if $(\g,J)$ admits a $p$-Kähler structure for some integer $p$ satisfying $2\leq p\leq n-2$, it also admits a Kähler metric. 
\end{proposition}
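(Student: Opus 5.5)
The plan is to read off the two Kähler conditions of Proposition \ref{lemma17} (purely imaginary spectrum, and $v\in(A-\lambda\id)(\abj)$) from the closedness equations \eqref{domega=0}, using transversality of $\Omega_J$. Let $\Omega$ be a $p$-Kähler structure with $2\le p\le n-2$. Fix a presentation $(\lambda,v,A)$ determined by some $y$ and a metric $g\in\mathcal G(y)$, and decompose $\Omega$ as in \eqref{dOmega}. By \cite[Theorem 3.4]{FagMai}, $\Omega_J$ is a transverse $(p,p)$-form on $\abj$. Note that $\abj$ has complex dimension $n-1$, so $1\le p<n-1$ there. Closedness gives \eqref{domega=0}, namely $A^*\Omega_J=0$ and $2\iota_v\Omega_J=-i(\lambda\id+A^*)(\eta-\bar\eta)$.

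\textbf{Step 1: the spectrum of $A$ is purely imaginary.} Since $A$ commutes with $J$ and is diagonalizable over $\C$, we can choose a basis $\zeta_1,\dots,\zeta_{n-1}$ of $\Lambda^{1,0}\abj$ consisting of eigenvectors of $A^*$, with eigenvalues $\mu_1,\dots,\mu_{n-1}$; these are the eigenvalues of $A$ on $\abj^{1,0}$. Acting as a derivation, $A^*$ maps $\zeta_I\wedge\bar\zeta_K$ to $(\mu_I+\bar\mu_K)\,\zeta_I\wedge\bar\zeta_K$, where $\mu_I:=\sum_{j\in I}\mu_j$. Write $\Omega_J=\sum c_{IK}\,\zeta_I\wedge\bar\zeta_K$ with $|I|=|K|=p$. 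Wedging $\Omega_J$ with $\prod_{j\notin I} i\zeta_j\wedge\bar\zeta_j$ isolates $c_{II}$ up to a nonzero normalizing factor, and transversality then shows that $c_{II}\neq0$ for every $p$-subset $I$. The diagonal components of $A^*\Omega_J=0$ therefore give $\operatorname{Re}\mu_I=0$ for all $|I|=p$. Because $p<n-1$, we can exchange a single index $j\in I$ for an index $k\notin I$, which yields $\operatorname{Re}\mu_j=\operatorname{Re}\mu_k$ for all $j,k$. Combined with $p\operatorname{Re}\mu_j=0$, this gives $\operatorname{Re}\mu_j=0$ for every $j$. Hence $A$ is diagonalizable with purely imaginary spectrum, and in particular $\tr A=0$.

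\textbf{Step 2: $v\in(A-\lambda\id)(\abj)$.} If $\lambda\neq0$, then $A-\lambda\id$ is invertible and there is nothing to prove. Suppose $\lambda=0$ and $v\notin A(\abj)$. Since $A$ is semisimple, $A(\abj)=A^2(\abj)$, so by the change of presentation used in Corollary \ref{vinKer} (with $\lambda=0$, this only replaces $v$ by $v+Aa$ and leaves $A$ unchanged) we may assume $0\neq v\in\ker A$. Equation \eqref{domega=0} for this new presentation reads $2\iota_v\Omega_J=-iA^*(\eta-\bar\eta)$. By \eqref{iotabracket}, $[\iota_v,A^*]=\iota_{Av}=0$, so $A^*\iota_v\Omega_J=\iota_vA^*\Omega_J=0$. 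Thus $\iota_v\Omega_J$ lies in $\ker A^*\cap\operatorname{im}A^*$. This intersection is zero, because $A^*$ is diagonalizable on $\Lambda_\C\abj^*$. Hence $\iota_v\Omega_J=0$. Writing $v=v'+\bar v'$ with $0\neq v'\in\abj^{1,0}$ and separating types, we get $\iota_{v'}\Omega_J=0$. Restricting $\Omega_J$ to a $J$-invariant subspace of complex dimension $p$ containing $\operatorname{Re}v'$ and $\operatorname{Im}v'$ then gives a degenerate form rather than a positive volume form, contradicting transversality. So $v\in A(\abj)$, and Proposition \ref{lemma17} provides a Kähler metric.

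The main obstacle I expect is the transversality input in Step 1: one must justify carefully that the diagonal coefficients $c_{II}$ are nonzero (indeed of a definite sign after normalization), so that each diagonal component of $A^*\Omega_J=0$ forces $\operatorname{Re}\mu_I=0$. This rests on the equivalent characterization of transversality by wedging with products $i\gamma_j\wedge\bar\gamma_j$. Step 2 is comparatively formal once the commutation relation \eqref{iotabracket} and the reduction of Corollary \ref{vinKer} are in hand.
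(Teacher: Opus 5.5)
Your proof is correct and follows the paper's argument. Step 1 is the same transversality and eigenvalue computation, and Step 2 uses the same reduction (the construction behind Corollary \ref{vinKer}) to $0\neq v\in\ker A$, followed by the second equation in \eqref{domega=0}. The only difference is in the final contradiction: the paper evaluates $\iota_v\Omega_J=A^*\tau$ on the tuple $(Jv,X_2,\bar X_2,\dots,X_p,\bar X_p)$, whereas you observe that $\iota_v\Omega_J\in\ker A^*\cap\operatorname{im}A^*=0$, which is slightly cleaner and avoids having to arrange that $v^{1,0}$ is one of the chosen eigenvectors.
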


\begin{proof}
    Let $\Omega$ be a $p$-Kähler structure on $(\g,J)$, and define $\Omega_J,\eta,\Phi$ as in  \eqref{dOmega}, with $A^*\Omega_J=0$ by \eqref{domega=0}. 
    Since $A$ commutes with $J$, $A$ preserves the spaces  $\pt{\abj^\C}^{1,0}$ and $\pt{\abj^\C}^{0,1}$.
    Let $X_2,\dots,X_{n}$ be a basis of $\pt{\abj^\C}^{1,0}$ of eigenvectors of $A$, with eigenvalues $z_2,\dots,z_{n}$ respectively.

    Then, for every $2\le j_1<\dots<j_p\le n$,
    \begin{equation*}
       0= A^*\Omega_J\pt{X_{j_1},\bar X_{j_1},\dots,X_{j_p},\bar X_{j_p}}=\Omega\pt{X_{j_1},\bar X_{j_1},\dots,X_{j_p},\bar X_{j_p}}\sum_{k=1}^p \pt{z_{j_k}+\bar z_{j_k}} .
    \end{equation*}
    Since $\Omega$ is transverse, $\Omega\pt{X_{j_1},\bar X_{j_1},\dots,X_{j_p},\bar X_{j_p}}$ cannot be $0$, so we have
    \begin{equation*}
        \sum_{k=1}^p \pt{z_{j_k}+\bar z_{j_k}}=0.
    \end{equation*}
    Since this holds for every choice of $j_1,\dots,j_p$, and $p<n-1$, it follows that $z_j+\bar z_j=0$, for every $j=2,\dots,n$.
    In other words, the eigenvalues of $A$ are all purely imaginary, so we can choose a Hermitian metric $g_0$ on $\abj$ such that $A$ is $g_0$-skew-symmetric. 
    If $\g$ is not unimodular, the thesis follows directly from Proposition \ref{lemma17}.
    
    If $\g$ is unimodular, Remark \ref{unimod} implies that $\lambda=0$.
    For the sake of contradiction, assume $(\g,J)$ does not admit compatible Kähler metrics.
    By \Cref{vinKer}, we can assume $0\neq v\in\ker A$, and thus $AJv=0$ as well, since $A$ commutes with $J$.
    We will now use the second equation in \eqref{domega=0}, stating that $\iota_v\Omega_J=A^*\tau$, for some $\tau\in\Lambda^{2p-1}\abj^*$, so that
    \begin{equation*}
       \Omega\pt{v,Jv,X_{2},\bar X_{2},\dots,X_{p},\bar X_{p}}=\pt{A^*\tau}\pt{Jv,X_{2},\bar X_{2},\dots,X_{p},\bar X_{p}}.
    \end{equation*}
    Now, the left hand side is non-zero due to the positivity of $\Omega_J$, whereas the right hand side vanishes because $AJv=0$ and the eigenvalues of $A$ are imaginary, giving a contradiction.
\end{proof}

We consider now the general case, where $A$ is not necessarily diagonalizable over $\C$.
In this case, we will identify the $\C$-vector space $(\abj,J)$ with $\pt{\abj^\C}^{1,0}$, and $A$ with the restriction of its $\C$-linear extension to $\pt{\abj^\C}^{1,0}$ (recall that $A$ commutes with $J$). 

Let $\pg{X_2,\dots,X_n}\subset\pt{\abj^\C}^{1,0}$ be a 
basis of $\pt{\abj^\C}^{1,0}$ so that the matrix of $A\in\End\pt{\abj^\C}^{1,0}$ with respect to this basis is in its Jordan normal form. This means that there exist complex numbers $z_2,\dots,z_n\in\C$, and $\delta_2,\dots,\delta_{n-1}\in\pg{0,1}$ such that 
 $A X_n=z_nX_n$ and
    \begin{equation}\label{jordandual}
    AX_k=z_kX_k+\delta_{k}X_{k+1},\qquad 
    \delta_{k}\pt{z_{k}-z_{k+1}}=0, \qquad \forall \ k=2,\dots,n-1.
\end{equation}
Let also $\alpha_2,\dots,\alpha_n\in\Lambda^{1,0}\abj$ be the dual of the Jordan basis $\pg{X_2,\dots,X_n}$. By duality we readily obtain
$A^*\alpha_2=z_2\alpha_2$ and
\begin{equation}\label{jordan}
    A^*\alpha_k=z_k\alpha_k+\delta_{k-1}\alpha_{k-1}, \qquad 
    \delta_{k-1}\pt{z_{k-1}-z_{k}}=0, \qquad \forall\ k=3,\dots,n.
\end{equation}
We will call $\pg{\alpha_2,\dots,\alpha_n}$ a dual Jordan basis for $A^*$.

For $k=2,\dots,n$, denote $\iota_k:=\iota_{X_k}$ and $\bar\iota_k:=\iota_{\bar X_k}$.
Then, \eqref{iotabracket} reads
\begin{equation}\label{iotajA}
    \pq{\iota_k,A^*}=z_k \iota_k+\delta_k\iota_{k+1},
    \qquad 
    \pq{\bar\iota_k,A^*}=\bar z_k  \bar\iota_k+\delta_k\bar\iota_{k+1},
\end{equation}
where here and below, we denote $\delta_{k}=0$ and $\iota_{k+1}=\bar\iota_{k+1}=0$, for every $k\geq n$.

In the following, starting from a genuine $p$-Kähler structure $\Omega$, we will use suitable restriction and contraction operators to construct transverse forms of lower degree, or of the same degree, but defined on subspaces of smaller dimension, ultimately obtaining fundamental forms of some Hermitian metrics.
The closedness will imply that such forms are preserved by restrictions of $A^*$, and a key point in our argument will be the following result.

\begin{lemma}\label{l22}
Let $V$ be a $2k$-dimensional real vector space  with a Hermitian structure $(J,h,\omega)$, and let $M$ be any endomorphism of $V$ commuting with $J$. 
If $M^*\omega=t\omega$, for some $t\in\R$, then $M$ is diagonalizable over $\C$.
\end{lemma}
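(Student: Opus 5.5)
By Lemma \ref{dualEnd} applied to $M$, the hypothesis $M^*\omega=t\omega$ reads $(2M_0J)^\flat=t\,\omega=(tJ)^\flat$, where $M_0$ and $M_1$ are the $h$-symmetric and $h$-skew-symmetric parts of $M$. Since $\flat$ is injective and $J$ is invertible, this gives
\[
M_0=\tfrac t2\,\id .
\]
Hence $M=\tfrac t2\id+M_1$, with $M_1$ an $h$-skew-symmetric endomorphism of $V$. The plan is therefore: make this reduction, then observe that a scalar plus a skew-symmetric endomorphism is diagonalizable over $\C$.

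For the last step, note that $M_1$ is a normal operator with respect to $h$. Its $\C$-linear extension to $V\otimes\C$ is skew-Hermitian for the Hermitian extension of $h$, so it is unitarily diagonalizable with purely imaginary eigenvalues. Adding the real scalar $\tfrac t2$ preserves diagonalizability, so $M$ is diagonalizable over $\C$, with all eigenvalues of real part $t/2$.

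The same holds if one regards $M$ as a $\C$-linear endomorphism of $(V,J)\simeq V^{1,0}$, which is the viewpoint used later in the paper. Since $M_1$ commutes with $J$, it preserves $V^{1,0}$, and restricted there it is skew-Hermitian for the Hermitian inner product induced by $h$. So the restriction is again diagonalizable.

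I do not expect a genuine obstacle here. The only point needing care is the identification $M^*\omega=(2M_0J)^\flat$, which is exactly \eqref{ao}; there one must check that the derivation extension of $M^*$ to $2$-forms agrees with the convention in \eqref{a*}, and this is how $M^*$ was defined.
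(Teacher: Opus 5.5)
Your proof is correct and is essentially the paper's argument: Lemma~\ref{dualEnd} turns $M^*\omega=t\omega$ into $M_0=\frac t2\id$, so $M$ is a real scalar plus an $h$-skew-symmetric endomorphism and hence diagonalizable over $\C$, with all eigenvalues of real part $t/2$. Your remarks on normality and on the $V^{1,0}$ viewpoint simply spell out the step that the paper states in one line.
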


\begin{proof}
Let $M_0,M_1$ be the $h$-symmetric and $h$-skew-symmetric components of $M$, respectively. By Lemma \ref{dualEnd} we get $M_0=\frac t2\id$. 
In particular, $M$ is diagonalizable over $\C$, and all its eigenvalues have real part $\frac t2$.
\end{proof}

In the proof of the first main result, an important role will be played by a dimensionality reduction argument, where we will use a recursive formula for the action of $A^*$ on iterated contractions of a fixed form.

\begin{lemma}\label{induction}
    Let $V$ be a real vector space of dimension $2(n-1)$ endowed with a complex structure $J$.
    Let $A$ be an endomorphism of $V$ commuting with $J$, and let $\pg{X_2,\dots,X_n}\in(V^\C)^{1,0}$ be a Jordan basis for $A$, satisfying \eqref{jordandual}.
    For a real form $\psi_0\in\Lambda^{q,q}V^*$, we denote $\phi_0:=A^*\psi_0$, and we define recursively $\psi_j,\phi_j\in\Lambda^{q-j,q-j}V^*$ by
\begin{equation}\label{psijphij}
    \psi_{j+1}:=i\,\iota_{n-j}\bar\iota_{n-j}\psi_{j},\quad
    \phi_{j+1}:=i\,\iota_{n-j}\bar\iota_{n-j}\phi_{j},\qquad\forall0\le j\le q-1,
\end{equation}
Where we denote as before by $\iota_k:=\iota_{X_k}$ and $\bar\iota_k:=\iota_{\bar X_k}$ for $k=2,\dots,n$.
Let $u_j:=z_j+\bar z_j$, and define recursively $U_0=0$, $U_{j+1}=U_j-u_{n-j}$.
Then, 
\begin{equation}\label{Apsij}
    A^*\psi_j=U_j\psi_j+\phi_j,\qquad\forall0\le j\le q.
\end{equation}
\end{lemma}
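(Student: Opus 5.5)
Induction on $j$. The case $j=0$ is just the definition $\phi_0=A^*\psi_0$, with $U_0=0$. Suppose \eqref{Apsij} holds for some $j\le q-1$. I want to compute $A^*\psi_{j+1}=i\,A^*\iota_{n-j}\bar\iota_{n-j}\psi_j$ by moving $A^*$ past the two contractions using \eqref{iotajA}. Set $k:=n-j$. Rewriting \eqref{iotajA} as $A^*\iota_k=\iota_kA^*-z_k\iota_k-\delta_k\iota_{k+1}$, and likewise for $\bar\iota_k$, gives
\begin{equation*}
A^*\iota_k\bar\iota_k=\iota_k\bar\iota_kA^*-(z_k+\bar z_k)\iota_k\bar\iota_k-\delta_k\iota_{k+1}\bar\iota_k-\delta_k\iota_k\bar\iota_{k+1}.
\end{equation*}
The one check to make is the sign/ordering: $\iota_k$ and $\bar\iota_k$ anticommute, and the derivation identity \eqref{iotabracket} is used once for each contraction.

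The terms with $\delta_k$ have to vanish on $\psi_j$. By construction,
\begin{equation*}
\psi_j=i^j\,\iota_{n-j+1}\bar\iota_{n-j+1}\cdots\iota_n\bar\iota_n\psi_0.
\end{equation*}
Since $k+1=n-j+1$, the form $\psi_j$ has already been contracted with both $X_{k+1}$ and $\bar X_{k+1}$ (for $j=0$ we have $k=n$ and $\delta_n=0$). Contractions anticommute and $\iota_{k+1}^2=0$, so $\iota_{k+1}\psi_j=0$ and $\bar\iota_{k+1}\psi_j=0$. After commuting $\iota_{k+1}$ or $\bar\iota_{k+1}$ past $\bar\iota_k$ or $\iota_k$, both $\delta_k$-terms therefore vanish on $\psi_j$.

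What remains is
\begin{equation*}
A^*\psi_{j+1}=i\,\iota_k\bar\iota_k(U_j\psi_j+\phi_j)-u_k\psi_{j+1}=(U_j-u_{n-j})\psi_{j+1}+\phi_{j+1}=U_{j+1}\psi_{j+1}+\phi_{j+1},
\end{equation*}
which completes the induction.

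The main obstacle is the bookkeeping that kills the nilpotent part: one has to see why contracting in the order $n,n-1,\dots$, against the direction of the Jordan chains in \eqref{jordandual}, makes the off-diagonal terms land on already-used indices. Beyond that, the proof only requires the derivation property of $A^*$ and care with signs. No reality assumption on $\psi_0$ is needed for the identity itself.
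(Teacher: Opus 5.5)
Your proof is correct and follows essentially the same route as the paper. You induct on $j$, commute $A^*$ past $\iota_{n-j}\bar\iota_{n-j}$ using \eqref{iotajA}, and kill the $\delta_{n-j}$-terms because $\psi_j$ is already annihilated by $\iota_{n-j+1}$ and $\bar\iota_{n-j+1}$. Your explicit handling of the step from $j=0$ to $j=1$ via $\delta_n=0$ is slightly more careful than the paper, which phrases the induction step only for $j\ge 1$.
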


\begin{proof}
Equation \eqref{Apsij} clearly holds for $j=0$. 
We now assume that \eqref{Apsij} holds for some $j\geq 1$.
By \eqref{jordandual}, we get
\begin{equation*}
    \iota_{AX_{n-j}}\psi_j=\iota_{(z_{n-j}X_{n-j}+\delta_{n-j}X_{n-j+1})}\psi_j=z_{n-j}\iota_{n-j}\psi_j,
\end{equation*}
as $\iota_{n-j+1}\psi_j=0$ by \eqref{psijphij} (applied to $j-1$). By complex conjugation we also get $\iota_{A\bar X_{n-j}}\psi_j=\bar z_{n-j}\bar\iota_{n-j}\psi_j.$
Then, by \eqref{iotajA} we can write
\begin{eqnarray*}
    A^*\psi_{j+1}&=&A^*\pt{i\,\iota_{n-j}\bar\iota_{n-j}\psi_{j}}\\
    &=&i\,\iota_{n-j}\pt{A^*\pt{\bar\iota_{n-j}\psi_{j}}  }-i\, z_{n-j}\iota_{n-j}\bar\iota_{n-j}\psi_{j}\\
    &=&i\,\iota_{n-j}\bar\iota_{n-j}\pt{A^*\psi_{j}}-i\bar{z}_{n-j}\,\iota_{n-j}\bar\iota_{n-j}\psi_{j}-i\,z_{n-j} \iota_{n-j}\bar\iota_{n-j}\psi_{j}\\
    &=&i\,\iota_{n-j}\bar\iota_{n-j}(U_j\psi_j+\phi_j)-i\,u_{n-j}\iota_{n-j}\bar\iota_{n-j}\psi_j\\
    &=&U_{j+1}\psi_{j+1}+\phi_{j+1},
\end{eqnarray*}
thus proving the induction step.
\end{proof}

We are now ready to prove the following characterization of the existence of $p$-Kähler structures, generalizing \cite[Theorem 4.2]{fm}.

\begin{theorem}\label{pKahler}
    Let $J$ be a complex structure on an almost abelian Lie algebra $(\g,\ab)$ of real dimension $2n\ge8$.
    For every $p\le n-2$, if $(\g,J)$ admits a $p$-Kähler structure,  then it admits a Kähler metric.     
\end{theorem}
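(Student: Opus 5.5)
The plan is to reduce to Proposition \ref{AdiagK}: it suffices to show that if $\Omega$ is a $p$-Kähler structure with $2\le p\le n-2$ (the case $p=1$ being trivial), then $A$ is diagonalizable over $\C$ for some (equivalently every) presentation $(\lambda,v,A)$, since $A$ only changes by a nonzero scalar under \eqref{changeTilde}. Write $\Omega$ as in \eqref{dOmega}. By \eqref{domega=0}, closedness gives $A^*\Omega_J=0$, and $\Omega_J$ is a transverse $(p,p)$-form on $\abj$ by \cite[Theorem 3.4]{FagMai}. So the problem becomes a purely linear-algebraic one: if a transverse $(p,p)$-form $\psi_0$ on the complex vector space $(\abj,J)$ of complex dimension $n-1\ge p+1$ satisfies $A^*\psi_0=0$, then $A$ is diagonalizable. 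The second equation in \eqref{domega=0} then plays no further role, because Proposition \ref{AdiagK} already handles it.

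To get diagonalizability, I would fix a Jordan basis $\{X_2,\dots,X_n\}$ as in \eqref{jordandual} and apply Lemma \ref{induction} with $\psi_0=\Omega_J$, $q=p$ and $\phi_0=0$. Then all $\phi_j$ vanish and $A^*\psi_j=U_j\psi_j$. Transversality is preserved under the contractions $i\,\iota_k\bar\iota_k$, so $\psi_{p-1}$ is a transverse $(1,1)$-form, that is, the fundamental form of a Hermitian metric. However, it lives naturally on $\abj$ and is degenerate along $X_{n-p+2},\dots,X_n$. The argument therefore has to be organised on the quotient, or on the span $W$ of $X_2,\dots,X_{n-p+1}$, which is $A$-invariant modulo the contracted vectors. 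Because the Jordan basis is ordered so that $AX_k\in\langle X_k,X_{k+1}\rangle$, the quotient $\abj/\langle X_{n-p+2},\dots,X_n\rangle$ carries an induced endomorphism $\bar A$. Then $\psi_{p-1}$ descends to a Hermitian form on it with $\bar A^*\psi_{p-1}=U_{p-1}\psi_{p-1}$. Lemma \ref{l22} then gives that $\bar A$ is diagonalizable, with all eigenvalues having real part $U_{p-1}/2$.

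The resulting information is partial, since it only controls the quotient. To cover all Jordan blocks, I would vary the ordering. Using transversality of $\Omega_J$ on $J$-invariant subspaces, I would first restrict $\Omega_J$ to the $A$-invariant subspace spanned by a chosen union of Jordan blocks, or by suitable tails of blocks, and then contract along the last vectors of other chains. The key check is that Lemma \ref{induction} only requires contracting $X_{n-j}$ after $X_{n-j+1}$ has been contracted, within each chain, so chains can be reordered arbitrarily. Since $n-1\ge p+1$, the remaining Hermitian form lives on a space of complex dimension at least $2$. Choosing the contracted vectors to avoid the first two vectors $X_k,X_{k+1}$ of any given nontrivial Jordan block, with $\delta_k=1$, is possible when $p-1\le n-3$. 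The induced map then contains that $2\times2$ Jordan block, contradicting Lemma \ref{l22}. This shows that all $\delta_k=0$, so $A$ is diagonalizable, and Proposition \ref{AdiagK} finishes the proof.

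The main obstacle is the bookkeeping in this last step. One must show that the contraction-and-quotient procedure is compatible with a Jordan block of size $\ge2$ that sits inside a long chain, while keeping the contracted set closed under taking later vectors in each chain, so that $\iota_{n-j+1}\psi_j=0$ holds as used in Lemma \ref{induction}. If a single block has length greater than $n-p$, the vectors to contract may be forced into that block. In that case I would instead restrict $\Omega_J$ to the $J$-invariant, $A$-invariant tail subspace $\langle X_{k},\dots,X_n\rangle$ of that block, which has complex dimension at least $p+1$ when chosen appropriately, so that $\Omega_J$ restricts to a transverse form killed by the restricted $A^*$. I would then contract the last $p-1$ vectors, leaving a $2\times 2$ Jordan block on the quotient, where Lemma \ref{l22} again yields a contradiction. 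The hypothesis $2n\ge8$ guarantees $n-1\ge3$ and leaves room for this.
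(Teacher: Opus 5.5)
Your proposal is correct. Its first half coincides with the paper's Step~1: the map $\bar A$ induced on $\abj/\langle X_{n-p+2},\dots,X_n\rangle$ is precisely the truncated endomorphism $M$ of \eqref{endM}, and Lemma~\ref{l22} gives $\delta_l=0$ for $2\le l\le n-p$.

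The two routes differ in how the remaining Jordan data are controlled. The paper keeps a single Jordan basis and obtains $\delta_l=0$ for $n-p\le l\le n-1$ from a second construction. It restricts $\Omega_J$ to the $A$-invariant span $W_{p+1}$ of the last $p+1$ basis vectors and writes $\Omega_J|_{W_{p+1}}=\sigma^p$ by Michelsohn's result. It then uses the Lefschetz isomorphism $\cdot\wedge\sigma^{p-1}$ to get $A_{p+1}^*\sigma=0$, so that Lemma~\ref{l22} applies once more.

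You instead exploit the freedom in choosing the contracted set. Only the first two vectors of a non-trivial block need to survive, and $p-1\le n-3$ leaves enough room. The cleanest implementation is to reorder the Jordan blocks so that a given non-trivial block comes first. The last $p-1$ basis vectors are then automatically a union of chain tails, so Lemma~\ref{induction} applies verbatim. Step~1 then gives $\delta_2=0$, since $n-p\ge 2$, which is a contradiction.

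This dispenses with Michelsohn's theorem and the Lefschetz argument entirely, at the modest cost of varying the basis. The paper's approach extracts all the information from one fixed basis, but needs the extra structure on $W_{p+1}$ to do so.

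The worry in your last paragraph is unfounded. Suppose a block of length $\ell>n-p$ forces some contracted vectors into it. There are $p+\ell-n$ such vectors, and they can be taken among its last $\ell-2$ vectors precisely because $p\le n-2$. So the fallback is not needed. As literally stated it is also unreliable: a tail lying inside a single block need not have complex dimension $p+1$.
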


\begin{proof}
The proof will consist of two main steps.
In the first one, we will prove that whenever $(\g,\ab,J)$ admits a $p$-Kähler structure $\Omega$, for every presentation $(\lambda,v,A)$, $\abj^\C$ can be written as $\abj^\C=V+ W$, such that $A$ is diagonalizable on $V$ and preserves $W$, so that $A^*$ preserves $W^*$.
In the second step, we will prove that we can associate to $\Omega$ a Hermitian metric $\omega$ on $W$ such that $A^*\omega=0$.
This will allow us to conclude, using Lemma \ref{l22} and Proposition \ref{AdiagK}.

Let $\Omega$ be a $p$-Kähler structure on $(\g,J)$, and $\Omega_J$ defined as in \eqref{dOmega}.
For  a presentation $(\lambda,v,A)$  of $(\g,\ab,J)$, we have by  \eqref{domega=0} that $A^*\Omega_J=0.$
Fix a Jordan basis $\pg{X_2,\dots,X_n}\subset(\abj^\C)^{1,0}$ for $A$ satisfying \eqref{jordandual}, with dual basis $\pg{\alpha_2,\dots,\alpha_n}$.
For every $k=1,\dots,n-1$, we will consider the $2k$-dimensional vector subspaces of $\abj^\C$: 
\begin{equation*}
\begin{aligned}
    & V_{k}:=\operatorname{span}\pg{X_2,\dots, X_{k+1},\bar X_2,\dots, \bar X_{k+1}},\\
    & W_k:=\operatorname{span}\pg{X_{n-k+1},\dots, X_n,\bar X_{n-k+1},\dots, \bar X_n}.
\end{aligned}
\end{equation*}
Since $X_j$ is a $(1,0)$-vector for all $j=2,\dots,n$, the complex vector spaces $V_k$ and $W_k$ are  complexifications of real vector subspaces $V_k^\R$ and $W_k^\R$ of $\abj$ which are $J$-invariant and have real dimension $2k$.
Moreover, $\abj^\C=V_{k}\oplus W_{n-k-1}$, for all $1\le k\le n-1$.
Corresponding to this splitting, we will identify the duals of $V_k$ and $W_k$ with the following subspaces of $(\abj^\C)^*$:
\begin{equation*}
\begin{aligned}
    & V_{k}^*=\operatorname{span}\pg{\alpha_2,\dots, \alpha_{k+1},\bar \alpha_2,\dots, \bar \alpha_{k+1}},\\
    & W_k^*:=\operatorname{span}\pg{\alpha_{n-k+1},\dots, \alpha_n,\bar \alpha_{n-k+1},\dots, \bar \alpha_n},
\end{aligned}
\end{equation*}
and again $(\abj^\C)^*=V_{k}^*\oplus W_{n-k-1}^*$, for all $1\le k\le n-1$.
Note that by \eqref{jordandual} $A$ preserves each $W_k$ and by \eqref{jordan}, $A^*$ preserves each $V_k^*$.
Furthermore, $A$ preserves $V_k$ if and only if $\delta_{k+1}=0$, and similarly $A^*$ preserves $W_k^*$  if and only if $\delta_{n-k}=0$.

For $j=0,\dots,p-1$, we denote $\psi_0\coloneqq\Omega_J$, and define recursively the real forms $\psi_j,\phi_j\in\Lambda^{p-j,p-j}V_{n-j-1}^*$  as in Lemma \ref{induction}.
By construction, all the $\psi_j$'s are transverse, because $\Omega$ is transverse.
Then, by Lemma \ref{induction}, we get
\begin{equation}\label{Apsij0}
    A^*\psi_j=U_j\psi_j,
\end{equation}
for all  $j=0,\dots,p-1$, where $U_j=-\pt{u_n+\dots+u_{n-j+1}}$, and $u_k=z_k+\bar z_k\in\R$, for all $k=2,\dots,n$.
For $j=p-1$, $\psi_{p-1}$ is actually the fundamental form of a Hermitian metric $h$ on $V_{n-p}$, 
so we are almost in the assumptions of Lemma \ref{l22}, except that a priori $A$ does not preserve $V_{n-p}$. In order to overcome this difficulty, we introduce the endomorphism $M$ of $V_{n-p}$ defined by
\begin{equation}\label{endM}
    MX_l=AX_l=z_lX_l+\delta_{l}X_{l+1},\quad l=2,\dots,n-p,\quad M X_{n-p+1}=z_{n-p+1}X_{n-p+1},
\end{equation}
and commuting with the complex conjugation in $V_{n-p}$. Then, $M^*=\rest{A^*}{V_{n-p}^*}$, so by \eqref{Apsij0} for $j=p-1$, together with Lemma \ref{l22}, we conclude that $M$ is diagonalizable on $V_{n-p}$.
This shows that 
\begin{equation}\label{delta}
    \delta_l=0,\qquad\forall\ l=2,\dots,n-p.
    \end{equation}
Consequently, $A$ preserves $V_{n-p-1}$, its restriction to $V_{n-p-1}$ is diagonalizable, and $A^*$ preserves $W_k^*$, for all $k=p,\dots,n-1$.

For the second part of the proof, we consider the restriction of $\Omega$ to $W_{p+1}$ (note that this space is well defined since $p\le n-2$ by assumption):
\begin{equation*}
    \rest{\Omega}{W_{p+1}}=\rest{\Omega_J}{W_{p+1}}\in\Lambda^{p,p}W_{p+1}^*.
\end{equation*}
Thus, $\rest{\Omega_J}{W_{p+1}}$ is a transverse $(p,p)$-form on the $(p+1)$-dimensional complex vector space $(W_{p+1}^\R,J)$, and so by \cite[Equation (4.8)]{michelsohn}, there exists a Hermitian metric on $(W_{p+1}^\R,J)$ with fundamental form $\sigma$ such that $\rest{\Omega_J}{W_{p+1}}=\sigma^p$.

From the first part of the proof we know that $A^*$ preserves $W_{p+1}^*$, and $A^*|_{W_{p+1}^*}=A_{p+1}^*$, where $A_{p+1}:=\rest{A}{W_{p+1}}$ (recall that $A$ preserves each $W_{k}$), so $A^*\Omega_J=0$ implies
\begin{equation}\label{Asigmap}
    0=\rest{\pt{A^*\Omega_J}}{W_{p+1}}=A_{p+1}^*\pt{\rest{\Omega_J}{W_{p+1}}}=A_{p+1}^*\pt{\sigma^p}=p\pt{A_{p+1}^*\sigma}\wedge\sigma^{p-1},
\end{equation}
where we used in the first equality that $A^*\Omega_J=0$, by \eqref{domega=0}.
Since we are now in complex dimension $p+1$, the linear map 
\begin{equation}\label{Lsigmaiso}
   \cdot \wedge\sigma^{p-1}\colon\Lambda^2 W_{p+1}^*\to\Lambda^{2p}W_{p+1}^*
\end{equation}
is an isomorphism, so we conclude by \eqref{Asigmap} that $A_{p+1}^*\sigma=0.$
Lemma \ref{l22} can then be applied to $A_{p+1}$ which is now a well defined endomorphism of $W_{p+1}^\R$ commuting with $J$.
We conclude that $A$ is diagonalizable on $W_{p+1}$, and thus $\delta_j=0$, for $j=n-p,\dots,n-1$.
Together with \eqref{delta}, this means that $A$ is diagonalizable on $\abj^\C$.
The assumptions of Proposition \ref{AdiagK} are thus satisfied, so we conclude that $(\g,J)$ admits a Kähler metric.
\end{proof}

\section{\texorpdfstring{$p$}{p}-pluriclosed structures}\label{secpPl}

 A $p$-pluriclosed structure on a Lie algebra $\g$ with complex structure $J$ of real dimension $2n$ is a transverse $(p,p)$-form $\Omega$ such that $dd^c\Omega=0$. 
For $p=1$ this corresponds to pluriclosed metrics. For $p=n-1$, every transverse $(n-1,n-1)$-form $\Omega$ can be written as $\Omega=\omega^{n-1}$ for some Hermitian form $\omega$ by \cite[Equation (4.8)]{michelsohn}, so $\Omega$ is $(n-1)$-pluriclosed if and only if $\omega$ is Gauduchon. 

We will study separately in the next subsections the cases $p=1$ (pluriclosed), $p=n-1$ (Gauduchon) and $2\leq p\leq n-2$ (genuine $p$-pluriclosed).

\subsection{Pluriclosed metrics}
Similarly to the Kähler case, we can use Corollary \ref{ddcCor} to recover the following characterization of pluriclosed metrics, as in \cite{AL}.

\begin{lemma}\label{lskt}
    On an almost abelian Lie algebra $(\g,\ab)$ with complex structure $J$ and presentation $(\lambda,v,A)$ determined by $y\in\ab\setminus\abj$, a Hermitian metric $g\in\mathcal{G}(y)$ is pluriclosed if and only if
     \begin{equation}\label{p(A0)}
         \lambda A_0+2A_0^2+\pq{A_0,A_1}=0,
     \end{equation}
     where $A_0$ denotes the $g$-symmetric part of $A$ and $A_1$ the $g$-skew-symmetric part of $A$.
\end{lemma}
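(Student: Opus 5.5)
Following the pattern of Lemma \ref{l14}, I will write the fundamental form of $g\in\mathcal{G}(y)$ as
\[
\omega=2i\alpha\wedge\bar\alpha+\omega_0,
\]
where $\omega_0\in\Lambda^{1,1}\abj$ is the fundamental form of $g|_{\abj}$. I will then apply Corollary \ref{ddcCor} to the two summands separately. The first summand is of the form $\alpha\wedge\beta$ with $\beta=2i\bar\alpha$, so the first identity in \eqref{ddc} gives $dd^c(2i\alpha\wedge\bar\alpha)=0$. For the second summand, the second identity in \eqref{ddc} gives
\[
dd^c\omega=dd^c\omega_0=-2i\,\alpha\wedge\bar\alpha\wedge\pt{\lambda A^*\omega_0+A^*A^*\omega_0}.
\]

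Next I will argue that the $4$-form $\alpha\wedge\bar\alpha\wedge\tau$, with $\tau\in\Lambda^2_\C\abj^*$, vanishes if and only if $\tau=0$. This holds because $\alpha,\bar\alpha$ span the complement of $\abj^*\otimes\C$ in $\g^*_\C$. Hence $\omega$ is pluriclosed if and only if
\[
\lambda A^*\omega_0+A^*A^*\omega_0=0.
\]

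To turn this into a condition on endomorphisms, I will invoke Lemma \ref{dualEnd} with $V=\abj$ and $h=g|_{\abj}$. This is allowed because $A$ commutes with $J$ on $\abj$ (Remark \ref{r11}). Substituting \eqref{ao} and \eqref{aao} gives
\[
\lambda A^*\omega_0+A^*A^*\omega_0=\pt{2\pt{\lambda A_0+2A_0^2+\pq{A_0,A_1}}J}^\flat.
\]
Since the map $M\mapsto M^\flat$ is injective and $J$ is invertible, this vanishes exactly when \eqref{p(A0)} holds.

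None of these steps is a real obstacle. The only point needing care is the type/splitting argument, namely that wedging with $\alpha\wedge\bar\alpha$ is injective on forms pulled back from $\abj$. This follows because, in the basis $x^\flat,y^\flat$ together with a basis of $\abj^*$, $\alpha\wedge\bar\alpha$ is a nonzero multiple of $x^\flat\wedge y^\flat$, and forms pulled back from $\abj$ involve neither $x^\flat$ nor $y^\flat$.
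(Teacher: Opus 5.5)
Your proposal is correct and follows essentially the same route as the paper: write $\omega=2i\alpha\wedge\bar\alpha+\omega_0$, use \eqref{ddc} to reduce pluriclosedness to $(\lambda A^*+A^*A^*)\omega_0=0$, and then translate this into \eqref{p(A0)} via Lemma \ref{dualEnd}. The extra justifications you give are exactly the steps the paper leaves implicit, namely that wedging with $\alpha\wedge\bar\alpha$ is injective on forms from $\abj$ and that $M\mapsto M^\flat$ is injective.
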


\begin{proof}
As in the proof of Lemma \ref{l14}, the fundamental form $\omega:=g(J\cdot,\cdot)$ can be  written 
    \[\omega=x^\flat\wedge y^\flat+\omega_0=2i\alpha\wedge\bar\alpha+\omega_0\] 
    with $\omega_0=\rest{\omega}{\abj}\in \Lambda^{1,1}\abj$. 
Then, by \eqref{ddc}, 
\begin{equation}\label{ddcomega}
    dd^c\omega=dd^c\omega_0=-2i\alpha\wedge\bar\alpha\wedge\pt{\lambda A^*+A^* A^*}\omega_0,
\end{equation}
so $\omega$ is pluriclosed if and only if $\pt{\lambda A^*+A^* A^*}\omega_0=0$.
Using Lemma \ref{dualEnd}, we obtain equation \eqref{p(A0)}.
\end{proof}

\begin{remark}\label{rmkSKT} If we denote by $A^t:=A_0-A_1$ the adjoint of $A$ with respect to $g$,
equation \eqref{p(A0)} is equivalent to the endomorphism
    \begin{equation}\label{p(A)}
        \lambda A+A^2+A^tA
    \end{equation}
being $g$-skew, so Lemma \ref{lskt} recovers the results of \cite[Lemma 4.2]{AL}.
\end{remark}

In order to exploit this relation  we will need the following results of linear algebra:

\begin{lemma}\label{polyn}
    Let $S,M$ be endomorphisms on a vector space $V$, and assume $S$ is symmetric with respect to some metric $g$.
If there is some polynomial $P$ such that
    \begin{equation}
        P(S)+[S,M]=0,
    \end{equation}
    then $[S,M]=0.$
\end{lemma}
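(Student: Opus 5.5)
Since $S$ is $g$-symmetric, we diagonalize it in a $g$-orthonormal basis. Let $V=\bigoplus_\mu E_\mu$ be its eigenspace decomposition, with the $E_\mu$ pairwise $g$-orthogonal and $\mu\in\R$. Write $M_{\mu\nu}:=\pi_\mu M|_{E_\nu}\colon E_\nu\to E_\mu$ for the blocks of $M$, where $\pi_\mu$ is the orthogonal projection onto $E_\mu$. Both $P(S)$ and $[S,M]$ are then easy to read off in block form, and the conclusion will follow by comparing blocks.

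First, $P(S)$ is block diagonal: it acts on $E_\mu$ as the scalar $P(\mu)$. Second, $[S,M]$ has blocks
\[
\pi_\mu [S,M]|_{E_\nu}=\mu M_{\mu\nu}-M_{\mu\nu}\nu=(\mu-\nu)M_{\mu\nu},
\]
so its diagonal blocks vanish and it is purely off-diagonal. Taking the diagonal blocks of the identity $P(S)+[S,M]=0$ gives $P(\mu)\,\id_{E_\mu}=0$ for every eigenvalue $\mu$, hence $P(S)=0$. The identity then reduces to $[S,M]=0$, which is the claim.

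An equivalent basis-free argument avoids blocks: $[S,M]$ is orthogonal to every $T$ commuting with $S$ for the trace pairing, because $\tr([S,M]T)=\tr(M[T,S])=0$. Applying this to $T=P(S)$, which commutes with $S$ and is symmetric, we get $\tr(P(S)^2)=-\tr([S,M]P(S))=0$. Since $P(S)$ is symmetric, this forces $P(S)=0$, and hence $[S,M]=0$.

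I expect no real obstacle here. The only place the hypothesis is genuinely used is that symmetry of $S$ makes it semisimple with real spectrum, so that the eigenspace decomposition exists; no assumption on $M$ (such as symmetry) is needed. One should also note that the metric is used only to guarantee diagonalizability: the block argument goes through for any diagonalizable $S$, with the projections onto the eigenspaces taken along the eigenspace decomposition rather than orthogonally.
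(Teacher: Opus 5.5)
Your proof is correct, and your main (block) argument takes a different route from the paper's. The paper argues only with traces. From $P(S)=[M,S]$ it gets $\tr(P(S)Q(S))=0$ for every polynomial $Q$, hence $\tr(P(S)^k)=0$ for all $k\ge1$. So $P(S)$ is nilpotent, and being $g$-symmetric it vanishes.

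Your second, basis-free argument is essentially the same idea, shortened. You only need $k=2$, since $\tr(T^2)=\norm{T}^2$ for a $g$-symmetric $T$ (this uses $g$ positive definite and $P$ real, as in the paper's setting).

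Your block argument instead decomposes $V$ into eigenspaces of $S$. A polynomial in $S$ is block diagonal, while $[S,M]$ has zero diagonal blocks, so both summands vanish separately. This makes the mechanism transparent and shows directly that only semisimplicity of $S$ matters. The trace route has the mild advantage that its first step, nilpotency of $P(S)$, holds for arbitrary $S$, with semisimplicity entering only at the end.

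Some semisimplicity is indispensable either way. For $S=\begin{pmatrix}0&1\\0&0\end{pmatrix}$, $M=\begin{pmatrix}1&0\\0&0\end{pmatrix}$ and $P(s)=s$, one has $P(S)+[S,M]=0$ but $[S,M]=-S\neq0$.
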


\begin{proof}
    Since by assumption $P(S)=[M,S]$, 
    for every polynomial in one variable $Q$, we have $P(S)Q(S)=[M,S]Q(S)$, so
    \begin{equation*}
        \tr\pt{P(S)Q(S)}=\tr\pt{[M,S]Q(S)}=\tr\pt{MSQ(S)}-\tr\pt{SMQ(S)}=0.
    \end{equation*}
    In particular, $\tr\pt{P(S)^k}=0$, for every $k\ge1$.
    Thus, $P(S)$ is nilpotent.
    On the other hand, if $S$ is $g$-symmetric, then so is $P(S)$, whence $P(S)=0$, and the lemma is proved.
\end{proof}

\begin{proposition}\label{PAomega=0}
Let $(V,J)$ be a real vector space endowed with a complex structure and $A\in\End(V)$, commuting with $J$.
    Let $P$ be a monic polynomial of degree $2$ with real coefficients, and assume there exists a Hermitian metric on $(V,J)$ with fundamental form $\omega$ such that $P(A^*)\,\omega=0$.
    Then, $A$ is diagonalizable over $\C$, and for every complex eigenvalue $z\in\mathrm{Spec}_\C(A)$, we have $P(z+\bar z)=0.$
\end{proposition}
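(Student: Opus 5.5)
Write $P(t)=t^2+bt+c$ and let $h$ be the Hermitian metric with fundamental form $\omega$. Split $A=A_0+A_1$ into its $h$-symmetric and $h$-skew-symmetric parts. As in the proof of Lemma \ref{dualEnd}, both parts commute with $J$.

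\textbf{Step 1: translate the hypothesis into an operator equation.} By Lemma \ref{dualEnd},
\[
P(A^*)\omega=\pt{2\pt{2A_0^2+\pq{A_0,A_1}}J+b\,2A_0J+c\,\id\,J}^\flat .
\]
For the constant term we use $\omega=J^\flat$, with the sign convention fixed so that $\omega=h(J\cdot,\cdot)$. Since $J$ is invertible and commutes with everything here, the hypothesis $P(A^*)\omega=0$ becomes
\[
4A_0^2+2bA_0+c\,\id+2\pq{A_0,A_1}=0 .
\]

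\textbf{Step 2: apply Lemma \ref{polyn}.} Take $S=A_0$, $M=2A_1$, and the polynomial $Q(t)=4t^2+2bt+c$. The lemma gives $\pq{A_0,A_1}=0$. It then follows from the identity above that $Q(A_0)=0$.

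\textbf{Step 3: diagonalize $A$.} Since $A_0$ and $A_1$ commute, $A_0$ is $h$-symmetric and $A_1$ is $h$-skew, they are simultaneously diagonalizable over $\C$. Both commute with $J$, so this can be done on $V^{1,0}$ using a common unitary eigenbasis. Hence $A$ is diagonalizable over $\C$. On a common eigenvector in $V^{1,0}$ we have $A_0=r$ real and $A_1=is$ purely imaginary, so the eigenvalue is $z=r+is$ and $z+\bar z=2r$.

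\textbf{Step 4: read off the eigenvalue condition.} From $Q(A_0)=0$ we get $4r^2+2br+c=0$, that is $(2r)^2+b(2r)+c=0$, i.e. $P(z+\bar z)=0$.

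The main point needing care is the bookkeeping in Step 1: the identity term and the normalization of $\omega$ versus $J^\flat$, and the factor $2$ relating $A_0$ to $z+\bar z$. Steps 2 to 4 are then direct consequences of Lemma \ref{polyn}.
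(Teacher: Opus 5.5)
Your proof is correct, and it takes a different route from the paper's.

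The paper works in a Jordan basis $\{X_2,\dots,X_n\}$ of $A$ on $V^{1,0}$. It applies $\iota_j\bar\iota_k$ to $P(A^*)\omega=0$ and uses the commutation relations \eqref{iotajkA} and \eqref{iotajkAA2} from the Appendix to get the scalar identities \eqref{jkaa=0}. The case $j=k=n$ gives $P(z_n+\bar z_n)=0$. A Jordan block of size at least $2$ is then ruled out by evaluating at $(j,k)=(n-1,n)$ and then at $(n-1,n-1)$, which forces $2\iota_n\bar\iota_n\omega=0$ and contradicts positivity.

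You instead convert the hypothesis, via Lemma \ref{dualEnd}, into the endomorphism identity $4A_0^2+2bA_0+c\,\id+2[A_0,A_1]=0$, and then invoke Lemma \ref{polyn}. This is the same mechanism the paper itself uses for the condition $[A^t,A]=0$ in Corollary \ref{corSKT} and in Step 2 of Theorem \ref{411}. Your bookkeeping of the constant term and of the factor $z+\bar z=2r$ is right.

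Your route is shorter and avoids the Appendix computations entirely. It also proves more: $A$ is normal with respect to the very metric $h$ whose fundamental form is $\omega$, and $A_0$ satisfies $Q(A_0)=0$. In particular, both conditions in \eqref{eqSpec} of Corollary \ref{corSKT} would follow from this single argument.

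What the paper's Jordan-basis computation buys is a weaker use of positivity. It only needs $\omega(X,\bar X)\neq0$ for suitable eigenvectors $X$ sitting at the end of a Jordan chain. Your argument genuinely needs $h$ to be definite, in two places:
\begin{itemize}
\item in Lemma \ref{polyn}, where a nilpotent symmetric operator is concluded to vanish;
\item in the simultaneous unitary diagonalization of the commuting symmetric and skew parts.
\end{itemize}
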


\begin{proof}
    We denote the real dimension of $V$ by $2(n-1)$ and consider a Jordan basis $\pg{X_2,\dots,X_{n}}$ for $A$, satisfying \eqref{jordandual}. For $k=2,\dots,n$, we denote as before $\iota_k:=\iota_{X_k}$ and $\bar\iota_k:=\iota_{\bar X_k}$ Writing $P(s)=s^2+as+b$, for $a,b\in\R$, we obtain for every $j,k=2,\dots,n$:
\begin{equation}
\begin{aligned}
    0=\iota_j\bar \iota_kP(A^*)\,\omega=&\iota_j\bar \iota_k\pt{A^*A^*+aA^*+b\id }\omega\\
    =&\pt{\pq{\iota_j\bar \iota_k,A^*A^*}+a\pq{\iota_j\bar \iota_k,A^*}+b\iota_j\bar \iota_k}\omega,
\end{aligned}
\end{equation}
as $A^*\iota_j\bar \iota_k\omega\in A^*\C=0$.
Then, combining \eqref{iotajkA} and \eqref{iotajkAA2} proved in the Appendix, we get for all $j,k=2,\dots,n$:
\begin{equation}\label{jkaa=0}
\begin{aligned}
0=&\pt{\pq{\iota_j\bar\iota_k,A^*A^*}+a\pq{\iota_j\bar\iota_k,A^*}+b\iota_j\bar \iota_k}\omega\\
    =& \pt{\pt{z_j+\bar z_k}^2+a\pt{z_j+\bar z_k}+b}\iota_j\bar\iota_k\omega\\ 
    &+\pt{2\pt{z_j+\bar z_k}+a}\pt{\delta_j\iota_{j+1}\bar \iota_k\omega+\delta_k\iota_j\bar \iota_{k+1}\omega}\\
    &+2\delta_j\delta_k \iota_{j+1}\bar \iota_{k+1}\omega+\delta_j\delta_{j+1}\iota_{j+2}\bar \iota_{k}\omega+\delta_k\delta_{k+1}\iota_{j}\bar \iota_{k+2} \omega.
\end{aligned}
\end{equation}
For $j=k=n$, we obtain
\begin{equation*}
    \pt{\pt{z_n+\bar z_n}^2+a\pt{z_n+\bar z_n}+b}\iota_n\bar\iota_n\omega=0,
\end{equation*} 
and since $\iota_n\bar\iota_n\omega\neq0$, this implies that
\begin{equation}\label{Pun=0}
    0=\pt{z_n+\bar z_n}^2+a\pt{z_n+\bar z_n}+b=P(z_n+\bar z_n).
\end{equation}
For every complex eigenvalue $z$ of $A$ we can assume, up to reordering $\pg{X_2,\dots,X_{n}}$, that $z_n=z$ or $z_n=\bar z$, so the last part of the statement is proved. 
We will now show that each Jordan block of $A$ has size $1$, proving that $A$ is diagonalizable over $\C$.

Indeed, assume for the sake of contradiction that $A$ has a Jordan block of rank larger than $1$. 
Then, up to reordering $\pg{X_2,\dots,X_{n}}$, we have $z_{n-1}=z_n$, $\delta_{n-1}=1$.
Equation \eqref{jkaa=0} for $j=n-1,k=n$ reads
\begin{equation*}
\pt{\pt{z_n+\bar z_n}^2+a\pt{z_n+\bar z_n}+b}\iota_{n-1}\bar\iota_n\omega+\pt{2(z_n+\bar z_n)+a}{\iota_{n}\bar \iota_{n}\omega}=0
    ,
\end{equation*}
as $\delta_k=0=\delta_{j+1}$.
Thus, by \eqref{Pun=0}, $2(z_n+\bar z_n)+a=0$.
We can now apply \eqref{jkaa=0} for $j=k=n-1$ to get
\begin{equation*}
0=2\delta_{n-1}\delta_{n-1} \iota_{n}\bar \iota_{n}\omega=2{\iota_{n}\bar \iota_{n}\omega},
\end{equation*}
as $\delta_{n-1}=\delta_{n-1}=1$ and $\delta_{n}=0$.
However, this contradicts the positivity of $\omega$, ultimately allowing us to conclude.
\end{proof}

As a consequence, we obtain the following characterization of pluriclosed metrics.

\begin{corollary}[{\cite[Theorem 4.6]{AL}}]\label{corSKT}
    On an almost abelian Lie algebra $(\g,\ab)$ with complex structure $J$ and presentation $(\lambda,v,A)$ determined by $y\in\ab\setminus\abj$, a Hermitian metric $g\in\mathcal{G}(y)$ is pluriclosed if and only if 
    \begin{equation}\label{eqSpec}
        [A^t,A]=0,\quad\text{and}\quad \real\pt{\operatorname{Spec}_\C(A)}\subseteq\pg{0,-\frac{\lambda}{2}},
    \end{equation}
    where $\cdot^t$ is transposition with respect to $g$.
\end{corollary}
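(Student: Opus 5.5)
We prove both implications of Corollary \ref{corSKT} by combining Lemma \ref{lskt} with Proposition \ref{PAomega=0} and Lemma \ref{polyn}. Throughout, fix $g\in\mathcal{G}(y)$ and let $\omega_0$ be the fundamental form of $g|_{\abj}$.

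\emph{Forward direction.} Assume $g$ is pluriclosed. By \eqref{ddcomega}, $\pt{A^*A^*+\lambda A^*}\omega_0=0$.

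\begin{enumerate}
\item Apply Proposition \ref{PAomega=0} with $P(s)=s^2+\lambda s$. This gives that $A$ is diagonalizable over $\C$ and that every eigenvalue $z$ satisfies $P(z+\bar z)=0$, i.e. $2\real z\in\{0,-\lambda\}$. This is the spectral condition in \eqref{eqSpec}.
\item By Lemma \ref{lskt}, $\lambda A_0+2A_0^2+[A_0,A_1]=0$. Apply Lemma \ref{polyn} with $S=A_0$ (which is $g$-symmetric), $M=A_1$ and $P(s)=2s^2+\lambda s$. It gives $[A_0,A_1]=0$.
\item Since $A^t=A_0-A_1$ and $A=A_0+A_1$, a direct computation gives $[A^t,A]=2[A_0,A_1]=0$. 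So $A$ is normal.
\end{enumerate}

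\emph{Converse.} Assume $[A^t,A]=0$ and the spectral condition hold.

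\begin{enumerate}
\item From $[A^t,A]=2[A_0,A_1]=0$ we get $[A_0,A_1]=0$, so $A$ is $g$-normal.
\item Since $A$ also commutes with $J$, we can decompose $\abj^\C$ into $g$-orthogonal eigenspaces on which $A_0$ and $A_1$ act simultaneously.
\item On the eigenspace of an eigenvalue $z$, $A_0$ acts by $\real z$ and $A_1$ by $i\,\mathrm{Im} z$. This is the standard fact for normal operators: the Hermitian part has eigenvalues $\real z$.
\item Hence $\lambda A_0+2A_0^2$ acts on that eigenspace by $\real z\,(\lambda+2\real z)$. This vanishes because $\real z\in\{0,-\lambda/2\}$.
\item Together with $[A_0,A_1]=0$, equation \eqref{p(A0)} holds, so $g$ is pluriclosed by Lemma \ref{lskt}.
\end{enumerate}

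The only substantive inputs are Proposition \ref{PAomega=0} (which rules out nontrivial Jordan blocks) and Lemma \ref{polyn}, both already available. The one step needing care is the spectral decomposition in the converse. To justify it, note that a real normal operator commuting with $J$ extends complex-linearly to a normal operator on $(\abj^\C)^{1,0}$ with respect to the induced Hermitian form. Its symmetric part is then the Hermitian part, whose eigenvalues are the real parts of the eigenvalues of $A$.
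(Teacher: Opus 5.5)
Your proof is correct and follows the paper's argument essentially step by step. You use Proposition \ref{PAomega=0} with $P(s)=s^2+\lambda s$ for the spectral condition and Lemma \ref{polyn} applied to \eqref{p(A0)} to get $[A^t,A]=2[A_0,A_1]=0$; in the converse you use a $g$-unitary eigenbasis on which $2A_0$ acts by $z_j+\bar z_j$, which forces $\lambda A_0+2A_0^2=0$. (A side remark that applies to the paper too: once Lemma \ref{polyn} gives $[A_0,A_1]=0$, the spectral condition already follows from $A_0(\lambda\id+2A_0)=0$, so Proposition \ref{PAomega=0} is not strictly needed in the forward direction.)
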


\begin{proof}
Let $g\in\mathcal{G}(y)$ be pluriclosed, let $\omega$ be the associated fundamental form and $\omega_0=\rest{\omega}{\abj}$. 
Then, by \eqref{ddcomega}, we have $\pt{\lambda A^*+A^* A^*}\omega_0=0$, so that we are in the assumptions of Proposition \ref{PAomega=0}.
It follows that $A$ is diagonalizable over $\C$ and for every eigenvalue $z\in\C$, we have $\lambda(z+\bar z)+(z+\bar z)^2=0$, namely the second part of \eqref{eqSpec} holds.
The first part of \eqref{eqSpec} is a straightforward consequence of equation \eqref{p(A0)} and Lemma \ref{polyn}, as $[A^t,A]=2[A_0,A_1]$.

Conversely, fix  $g\in\mathcal{G}(y)$ and assume \eqref{eqSpec} holds, where $A^t$ is determined by $g$.
Then, $A$ is normal, so it is diagonalizable over $\C$ and it admits a $g$-unitary basis of eigenvectors.
Let $\mathcal{B}=\pg{X_2,\dots,X_n}$ be such a $g$-unitary basis of $\pt{\abj^\C}^{1,0}$ of eigenvectors of $A$, with respective eigenvalues $z_2,\dots,z_n\in\C$.
By Lemma \ref{lskt}, we need to prove that $\lambda A_0+2A_0^2=0$, or equivalently that $(\lambda A_0+2A_0^2)X_j=0$, for all $j=2,\dots,n$.
Recall that $2A_0=A+A^t$, so $2A_0X_j=(z_j+\bar z_j)X_j$.
Indeed, for $j,k=2,\dots,n$,
\begin{equation*}
\begin{aligned}
   g\pt{2A_0X_j,\bar X_k}
   =&g\pt{\pt{A+A^t}X_j,\bar X_k}=g\pt{AX_j,\bar X_k}+g\pt{X_j,A\bar X_k}\\
   =&\pt{z_j+\bar z_k}g\pt{X_j,\bar X_k}.
\end{aligned}\end{equation*}
Thus, for all $j=2,\dots,n$, we have
\begin{equation*}
\begin{aligned}
    2\pt{\lambda A_0+2A_0^2} X_j=&(\lambda \id +2A_0)2A_0X_j=(z_j+\bar z_j)(\lambda \id +2A_0)X_j\\
    =&(z_j+\bar z_j)(\lambda +z_j+\bar z_j)X_j=0,
\end{aligned}\end{equation*}
concluding the proof.
\end{proof}

We finally extend the characterization of pluriclosedness on an almost abelian Lie algebra with complex structure and with a given presentation, to all Hermitian metrics, not necessarily compatible with the given presentation.

\begin{proposition}\label{propSKT}
    Let $J$ be a complex structure on an almost abelian Lie algebra $(\g,\ab)$ with presentation $(\lambda,v,A)$. 
    Then, there exists a pluriclosed metric on $(\g,J)$ if and only if $A$ is diagonalizable over $\C$ and the real part of its spectrum is contained in $\pg{0,-\lambda/2}$. 
\end{proposition}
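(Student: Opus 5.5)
The plan follows the pattern of Propositions \ref{lemma17} and \ref{37}: we reduce to Corollary \ref{corSKT} by changing the vector $y$ that determines the presentation. The key observation is that, by \eqref{changeTilde}, changing $y$ to $\tilde y=cy+a$ replaces $(\lambda,A)$ by $(c\lambda,cA)$. The conditions in the statement are therefore invariant: diagonalizability of $A$ is preserved, and the real parts of the spectrum scale by $c$, so the condition $\real\operatorname{Spec}(A)\subseteq\{0,-\lambda/2\}$ is preserved. Moreover, the pluriclosed criterion of Corollary \ref{corSKT} involves only $\lambda$ and $A$, not $v$.

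For the direct implication, suppose $g$ is a pluriclosed metric on $(\g,J)$.
\begin{enumerate}
\item Let $\tilde y\in\ab\setminus\abj$ be the unit vector, unique up to sign, that is $g$-orthogonal to $\abj$, so that $g\in\mathcal G(\tilde y)$.
\item Write $\tilde y=cy+a$ and let $(\tilde\lambda,\tilde v,\tilde A)=(c\lambda,\tilde v,cA)$ be the presentation determined by $\tilde y$.
\item Corollary \ref{corSKT} gives $[\tilde A^t,\tilde A]=0$, so $\tilde A$ is normal, hence diagonalizable over $\C$. It also gives $\real\operatorname{Spec}(\tilde A)\subseteq\{0,-\tilde\lambda/2\}$.
\item Dividing by $c\neq0$, $A$ is diagonalizable over $\C$ and $\real\operatorname{Spec}(A)\subseteq\{0,-\lambda/2\}$.
\end{enumerate}

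For the converse, assume $A$ is diagonalizable over $\C$ with $\real\operatorname{Spec}(A)\subseteq\{0,-\lambda/2\}$, and fix the $y$ defining the presentation.
\begin{enumerate}
\item Choose a basis $X_2,\dots,X_n$ of $(\abj^\C)^{1,0}$ consisting of eigenvectors of $A$, which exists because $A$ is diagonalizable and commutes with $J$.
\item Declare this basis unitary. This defines a Hermitian metric $g_0$ on $(\abj,J)$: since $A$ is real, its $(0,1)$-eigenvectors are the conjugates $\bar X_k$, and the Hermitian form $\sum_k \alpha_k\otimes\bar\alpha_k$ has a real part that is a $J$-invariant inner product on $\abj$.
\item With respect to $g_0$, the operator $A$ is normal, because it is diagonal in a unitary basis. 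Hence $[A^t,A]=0$.
\item Extend $g_0$ to the metric $g\in\mathcal G(y)$. Now \eqref{eqSpec} holds, and the converse part of Corollary \ref{corSKT} shows that $g$ is pluriclosed.
\end{enumerate}

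The only delicate point I expect is checking that the metric built in the converse is a genuine real $J$-Hermitian metric on $\abj$ for which $A$ is normal. This should follow from the standard correspondence between Hermitian inner products on the complex vector space $((\abj^\C)^{1,0}\cong(\abj,J))$ and $J$-invariant real metrics. Under that correspondence, $A$ is $\C$-linear and diagonal in an orthonormal basis, hence normal, and its adjoint is computed consistently on both sides.
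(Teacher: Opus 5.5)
Your proposal is correct and follows essentially the same route as the paper: reduce to Corollary \ref{corSKT} by moving to the vector $\tilde y=cy+a$ adapted to the metric and rescaling by $c$ via \eqref{changeTilde}. For the converse, you declare an eigenbasis of $A$ unitary so that $A$ is $g_0$-normal, and then extend $g_0$ to a metric in $\mathcal G(y)$.
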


\begin{proof}
Let $(\lambda,v,A)$ be a presentation determined by $y\in\ab\setminus\abj$. Assume that $g$ is a pluriclosed metric on $(\g,J)$ and let $\tilde y\in\ab\setminus\abj$ be the vector, unique up to sign, such that $g\in\mathcal{G}(\tilde y)$. 
Then, $\tilde y=c y+a$ for some $c\in\R^*$, $a\in\abj$, and let $(\tilde\lambda,\tilde v,\tilde A)$ be defined by \eqref{changeTilde}. 
By Corollary \ref{corSKT}, $\tilde A$ is diagonalizable over $\C$ and the real part of its spectrum is contained in $\pg{0,-\tilde\lambda/2}$. 
Since $\tilde\lambda=c\lambda$ and $\tilde A=cA$, it follows that $A$ is diagonalizable over $\C$ and the real part of its spectrum is contained in $\pg{0,-\lambda/2}$. 

Conversely, assume that $A$ is diagonalizable over $\C$ and the real part of its spectrum is contained in $\pg{0,-\lambda/2}$. 
Let $\mathcal{B}=\pg{X_2,\dots,X_n}$ be a basis of $\pt{\abj^\C}^{1,0}$ of eigenvectors of $A$, with respective eigenvalues $z_2,\dots,z_n$, and let $g_0$ be the Hermitian metric on $(\abj,J)$ such that $\mathcal B$ is unitary.
Then, $A$ is normal with respect to $g_0$, so every extension of $g_0$ to a Hermitian metric on  $(\g,J)$ is pluriclosed by Corollary \ref{corSKT}.
\end{proof}

\begin{remark}
The unimodularity of $\g$ gives restrictions in Proposition \ref{propSKT} on the multiplicity of the eigenvalues with real part $-\frac\lambda2$.
More precisely, if $\g$ is unimodular and $\lambda=0$, there exists a pluriclosed metric on $(\g,J)$ if and only if $A$ is diagonalizable over $\C$ and has purely imaginary spectrum.
On the other hand, if $\g$ is unimodular and $\lambda\neq0$, there exists a pluriclosed metric on $(\g,J)$ if and only if $A$ is diagonalizable over $\C$ and all the eigenvalues of $A$ are purely imaginary except for two of them, being the conjugate one of another, and having real part $-\lambda/2.$
\end{remark}

\subsection{Gauduchon metrics}

As before, we will first give a characterization of Gauduchon metrics compatible with a given presentation, and then extend it to all Hermitian metrics on the given almost abelian Lie algebra.

\begin{lemma}\label{lGaud}
    Let $J$ be a complex structure on an almost abelian Lie algebra $(\g,\ab)$ with presentation $(\lambda,v,A)$ determined by $y\in\ab\setminus\abj$. 
    A Hermitian metric $g\in\mathcal{G}(y)$ is Gauduchon if and only if $\tr A\in\pg{0,-\lambda}$.
\end{lemma}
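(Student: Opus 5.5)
We follow the same pattern as in Lemma \ref{lskt}: write the fundamental form of $g\in\mathcal{G}(y)$ as $\omega=2i\alpha\wedge\bar\alpha+\omega_0$, with $\omega_0=\rest{\omega}{\abj}\in\Lambda^{1,1}\abj$. Since $(\alpha\wedge\bar\alpha)^2=0$, we get
\[
\omega^{n-1}=\omega_0^{n-1}+2i(n-1)\,\alpha\wedge\bar\alpha\wedge\omega_0^{n-2}.
\]
By the first identity in \eqref{ddc}, $dd^c(\alpha\wedge\beta)=0$ for every $\beta$, so the second summand is $dd^c$-closed. It follows that
\[
dd^c\omega^{n-1}=dd^c\omega_0^{n-1}=-2i\,\alpha\wedge\bar\alpha\wedge\pt{\lambda A^*+A^*A^*}\omega_0^{n-1}.
\]

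The key observation is that $\omega_0^{n-1}$ is a volume form on the $2(n-1)$-dimensional space $\abj$. The derivation $A^*$ acts on top-degree forms of $\abj^*$ by multiplication by $\tr A$, where $\tr$ is the real trace of $A$ on $\abj$, matching the convention in Lemma \ref{l44}. Hence
\[
A^*\omega_0^{n-1}=\tr(A)\,\omega_0^{n-1},\qquad \pt{\lambda A^*+A^*A^*}\omega_0^{n-1}=\tr(A)\pt{\lambda+\tr(A)}\omega_0^{n-1}.
\]
Since $\alpha\wedge\bar\alpha\wedge\omega_0^{n-1}$ is a nonzero top form on $\g$, we conclude that $dd^c\omega^{n-1}=0$ if and only if $\tr(A)(\lambda+\tr A)=0$, that is, $\tr A\in\pg{0,-\lambda}$. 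The criterion does not involve $g|_{\abj}$, so the condition holds either for every metric in $\mathcal{G}(y)$ or for none.

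There is no real obstacle in this argument. The only points needing care are:
\begin{itemize}
\item the vanishing of the $\alpha\wedge\bar\alpha$ term, which is exactly the first identity in \eqref{ddc};
\item the identification of the eigenvalue of $A^*$ on $\Lambda^{top}\abj^*$ with the real trace. This is immediate: for instance, in a dual Jordan basis the action on $\alpha_2\wedge\bar\alpha_2\wedge\cdots$ is $\sum_k (z_k+\bar z_k)=\tr A$.
\end{itemize}
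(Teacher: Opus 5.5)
Your proof is correct and follows the paper's own argument: you reduce $dd^c\omega^{n-1}$ to $dd^c\omega_0^{n-1}$ via \eqref{ddc}, then use that $A^*$ acts on the volume form $\omega_0^{n-1}$ of $\abj$ by multiplication by $\tr A$. Your explicit expansion of $\omega^{n-1}$ and your remark that $\alpha\wedge\bar\alpha\wedge\omega_0^{n-1}\neq0$ spell out steps the paper leaves implicit.
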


\begin{proof}
    Let $g\in\mathcal{G}(y)$ and $\omega=g(J\cdot,\cdot)$ be its fundamental form.
    Then, by \eqref{ddc}, 
    \begin{equation}\label{eqGaud}
        dd^c\pt{\omega^{n-1}}=dd^c\pt{\omega_0^{n-1}}=-2i\alpha\wedge\bar\alpha\wedge\pt{\lambda A^*+A^* A^*}\omega_0^{n-1},
    \end{equation}
    where $\omega_0=\rest{\omega}{\abj}\in \Lambda^{1,1}\abj$. 
    In particular, $\omega_0^{n-1}$ is a volume form on $\abj$, so that $A^*\omega_0^{n-1}=(\tr A)\omega_0^{n-1}$.

    Thus, $g$ is Gauduchon if and only if $dd^c\omega=0$, which by \eqref{eqGaud} is equivalent to 
    \begin{equation*}
        0=\pt{\lambda \id +A^* }A^*\omega_0^{n-1}=\pt{\lambda+\tr A}\pt{\tr A}\omega_0^{n-1},
    \end{equation*}
    namely $\tr A\in\pg{0,-\lambda}$.
\end{proof}

\begin{corollary}
    Let $J$ be a complex structure on an almost abelian Lie algebra $(\g,\ab)$.
If $(\g,J)$ admits a Gauduchon metric, then every Hermitian metric on $(\g,J)$ is Gauduchon.
\end{corollary}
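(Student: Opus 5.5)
The claim follows from Lemma \ref{lGaud} together with the change-of-presentation formulas \eqref{changeTilde}. The key observation is that the condition in Lemma \ref{lGaud} does not depend on the choice of presentation. If $(\lambda,v,A)$ is determined by $y$ and $\tilde y=cy+a$ with $c\in\R^*$, $a\in\abj$, then $\tilde\lambda=c\lambda$ and $\tilde A=cA$. Hence $\tr\tilde A=c\tr A$, and so $\tr\tilde A\in\{0,-\tilde\lambda\}$ if and only if $\tr A\in\{0,-\lambda\}$.

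First I fix a Gauduchon metric $g$ on $(\g,J)$. I let $\tilde y\in\ab\setminus\abj$ be the unit vector, unique up to sign, that is $g$-orthogonal to $\abj$, so that $g\in\mathcal G(\tilde y)$, as explained after Remark \ref{unimod}. Lemma \ref{lGaud} applied to the presentation $(\tilde\lambda,\tilde v,\tilde A)$ determined by $\tilde y$ gives $\tr\tilde A\in\{0,-\tilde\lambda\}$.

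Next I take an arbitrary Hermitian metric $h$ on $(\g,J)$ and the corresponding vector $\hat y$ with $h\in\mathcal G(\hat y)$. I write $\hat y=c\tilde y+a$ and apply \eqref{changeTilde} to the presentation determined by $\tilde y$. This gives $\hat\lambda=c\tilde\lambda$ and $\hat A=c\tilde A$, hence $\tr\hat A\in\{0,-\hat\lambda\}$. Lemma \ref{lGaud} then shows that $h$ is Gauduchon.

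There is no real obstacle here. The only point to check is that every Hermitian metric belongs to some $\mathcal G(y)$, which was already noted in Section \ref{secAA}. As a byproduct, this also gives the existence criterion $\tr A\in\{0,-\lambda\}$ for any presentation.
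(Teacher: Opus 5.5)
Your proof is correct and is essentially the paper's argument: Lemma \ref{lGaud} applied to the presentation determined by the Gauduchon metric gives $\tr A\in\{0,-\lambda\}$. By \eqref{changeTilde} this condition is invariant under $\tilde y=cy+a$, so Lemma \ref{lGaud} applies to any other Hermitian metric via the vector $\hat y$ with $h\in\mathcal{G}(\hat y)$.
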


\begin{proof}
Let $g$ be a Gauduchon metric on $(\g,J)$ and $y\in\ab\setminus\abj$ be the vector, unique up to sign, such that $g\in\mathcal{G}(y)$. 
Then, if $(\lambda,v,A)$ is the presentation determined by $y$, we know by Lemma \ref{lGaud} that $\tr A\in\pg{0,-\lambda}$.
Since this condition does not depend on $g$, but only on the presentation, it follows that every Hermitian metric in $\mathcal{G}(y)$ is Gauduchon, once again by Lemma \ref{lGaud}.

If $\tilde g$ is any other Hermitian metric, take $\tilde y$ such that $\tilde g\in\mathcal{G}(\tilde y)$, and let $c\in\R^*$ be such that $\tilde y-cy\in\abj$.
Then, if $(\tilde \lambda,\tilde v,\tilde A)$ is the presentation determined by $\tilde y$, we find by \eqref{changeTilde}:
\begin{equation*}
    \tr\tilde A=c\tr A\in\pg{0,-c\lambda}=\pg{0,-\tilde\lambda}.
\end{equation*}
Thus, $\tilde g$ is Gauduchon by Lemma \ref{lGaud}.
\end{proof}

As a consequence, we find the following characterization of the existence of Gauduchon metrics on an almost abelian Lie algebra with complex structure:

\begin{proposition}\label{410}
    Let $J$ be a complex structure on an almost abelian Lie algebra $(\g,\ab)$ with presentation $(\lambda,v,A)$. 
    Then, there exists a Gauduchon metric on $(\g,J)$ if and only if $\tr A\in\pg{0,-\lambda}$.
\end{proposition}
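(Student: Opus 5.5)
The plan is to reduce Proposition \ref{410} to Lemma \ref{lGaud} together with the way presentations change under \eqref{changeTilde}, following the pattern of Propositions \ref{37} and \ref{propSKT}.

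Let $(\lambda,v,A)$ be the given presentation, determined by some $y\in\ab\setminus\abj$.

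\emph{Necessity.} Suppose $g$ is a Gauduchon metric on $(\g,J)$. Take $\tilde y\in\ab\setminus\abj$, unique up to sign, such that $g\in\mathcal{G}(\tilde y)$, and write $\tilde y=cy+a$ with $c\in\R^*$ and $a\in\abj$. Lemma \ref{lGaud}, applied to the presentation $(\tilde\lambda,\tilde v,\tilde A)$ determined by $\tilde y$, gives $\tr\tilde A\in\pg{0,-\tilde\lambda}$. By \eqref{changeTilde} we have $\tilde A=cA$ and $\tilde\lambda=c\lambda$, so $c\tr A\in\pg{0,-c\lambda}$. Dividing by $c\neq0$ yields $\tr A\in\pg{0,-\lambda}$.

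\emph{Sufficiency.} Suppose $\tr A\in\pg{0,-\lambda}$. Choose any Hermitian metric $g_0$ on $(\abj,J)$ and extend it to the metric $g\in\mathcal{G}(y)$. Lemma \ref{lGaud} then says directly that $g$ is Gauduchon. By the preceding corollary, every Hermitian metric on $(\g,J)$ is in fact Gauduchon, which gives the final clause of item \ref{TI5} of the Main Theorem.

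There is no real obstacle here. The only point needing care is that the condition $\tr A\in\pg{0,-\lambda}$ is invariant under rescaling the presentation by $c$, since both sides scale linearly in $c$. It is also worth noting that the $v$-component of the presentation plays no role in this condition.
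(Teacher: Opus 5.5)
Your proof is correct and follows essentially the same route as the paper. The paper deduces Proposition~\ref{410} from Lemma~\ref{lGaud} combined with the rescaling $\tilde\lambda=c\lambda$, $\tilde A=cA$ from \eqref{changeTilde}; this rescaling step is carried out in the proof of the corollary immediately preceding the proposition, exactly as you do.
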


\subsection{Genuine \texorpdfstring{$p$}{p}-pluriclosed structures}

In this final part, we will characterize almost abelian Lie algebras of dimension $2n$ admitting $p$-pluriclosed structures for some $2\leq p\leq n-2$. The key argument is the following:

\begin{theorem}\label{411}
    Let $J$ be a complex structure on an almost abelian Lie algebra $(\g,\ab)$ of real dimension $2n\ge8$.
   If $(\g,J)$ admits a $p$-pluriclosed structure for some  $1\leq p\le n-2$, then $A$ is diagonalizable over $\C$, for every presentation $(\lambda,v,A)$ of $(\g,\ab,J)$.      
\end{theorem}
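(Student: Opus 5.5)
The plan is to run the argument of Theorem \ref{pKahler}, replacing the closedness condition $A^*\Omega_J=0$ with the $dd^c$-condition, and Lemma \ref{l22} with Proposition \ref{PAomega=0}.

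\textbf{Step 0: the condition on $\Omega_J$.} Fix a presentation $(\lambda,v,A)$ and a $p$-pluriclosed $\Omega$, and decompose it as in \eqref{dOmega}. By \eqref{ddc}, $dd^c(\alpha\wedge\beta)=0$, and $dd^c(\bar\alpha\wedge\bar\eta)$ vanishes by conjugation. Hence
\[
dd^c\Omega=dd^c\Omega_J=-2i\alpha\wedge\bar\alpha\wedge P(A^*)\Omega_J,\qquad P(s):=s^2+\lambda s,
\]
so $\Omega$ is $p$-pluriclosed if and only if $P(A^*)\Omega_J=0$, with $\Omega_J$ transverse. For $p=1$, $\Omega_J$ is the fundamental form of a Hermitian metric on $\abj$, and Proposition \ref{PAomega=0} already gives diagonalizability of $A$. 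So the real work is for $2\le p\le n-2$, although the argument below also covers $p=1$.

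\textbf{Step 1: diagonalizability on $V_{n-p}$.} Fix a Jordan basis and set $\psi_0:=\Omega_J$. Define $\psi_j$ and $\phi_j$ by iterated contractions, as in Lemma \ref{induction}. Also apply Lemma \ref{induction} with $\phi_0=A^*\psi_0$ in the role of the starting form; the proof only uses $\iota_{n-j+1}\psi_j=0$, which also holds for the contracted $\phi_j$. Writing $\chi_j$ for the contractions of $A^*A^*\psi_0$, this gives
\[
(A^*-U_j)\psi_j=\phi_j,\qquad (A^*-U_j)\phi_j=\chi_j .
\]
Since contractions are linear and $A^*A^*\psi_0=-\lambda A^*\psi_0$, we get $\chi_j=-\lambda\phi_j$. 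Therefore $Q_j(A^*)\psi_j=0$, where
\[
Q_j(s)=(s-U_j)^2+\lambda(s-U_j)
\]
is a monic real quadratic. At $j=p-1$, $\psi_{p-1}$ is the fundamental form of a Hermitian metric on $V_{n-p}$. With the endomorphism $M$ of \eqref{endM} we have $M^*=A^*|_{V_{n-p}^*}$, so Proposition \ref{PAomega=0} shows that $M$ is diagonalizable. This gives $\delta_l=0$ for $2\le l\le n-p$.

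\textbf{Step 2: diagonalizability on $W_{p+1}$ (the main obstacle).} Since $A$ preserves $W_{p+1}$, restriction gives
\[
P(A_{p+1}^*)\,\sigma^p=0,\qquad \Omega_J|_{W_{p+1}}=\sigma^p,
\]
by Michelsohn's result. The difficulty is that, unlike in Theorem \ref{pKahler}, $P(A^*)$ is not a derivation, so $P(A^*)\sigma^p$ does not factor through $P(A^*)\sigma$.

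I would avoid this by dualizing. Fix a volume form $\mathrm{vol}$ on $W_{p+1}$ and use the isomorphism
\[
\Lambda^2W_{p+1}\to\Lambda^{2p}W_{p+1}^*,\qquad \xi\mapsto\iota_\xi\mathrm{vol}.
\]
Under it, \eqref{iotabracket} gives
\[
A^*(\iota_\xi \mathrm{vol})=\iota_\xi\bigl((\tr A_{p+1})\mathrm{vol}\bigr)-\iota_{A\xi}\mathrm{vol}.
\]
So $A^*$ on $(p,p)$-forms corresponds to $\tr A_{p+1}-A$ acting as a derivation on bivectors. The transverse form $\sigma^p$ corresponds to a positive real $(1,1)$-bivector $\xi$, that is, the fundamental form of a Hermitian metric on $W_{p+1}^*$; up to a positive factor, $\xi$ is the dual metric of $\sigma$.

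The endomorphism $-A_{p+1}$ acting on bivectors is the dual action $B^*$ of $B:=-(A_{p+1})^{\rm T}\in\End(W_{p+1}^*)$, which commutes with $J$. The condition therefore becomes $\tilde P(B^*)\xi=0$ with $\tilde P(s)=P(s+\tr A_{p+1})$, again a monic real quadratic. Proposition \ref{PAomega=0} then shows that $B$, hence $A|_{W_{p+1}}$, is diagonalizable, so $\delta_j=0$ for $n-p\le j\le n-1$.

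\textbf{Conclusion.} Steps 1 and 2 together kill every $\delta_l$, since $p\le n-2$ makes the two ranges of indices cover $2,\dots,n-1$. Hence $A$ is diagonalizable over $\C$. The presentation was arbitrary; alternatively, \eqref{changeTilde} shows that diagonalizability is independent of the presentation.

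The step I expect to require the most care is the sign and normalization bookkeeping in the bivector dualization of Step 2: checking that the dual action on $\Lambda^{1,1}$ bivectors is exactly the induced action of an endomorphism of $W_{p+1}^*$ commuting with $J$, so that Proposition \ref{PAomega=0} applies verbatim.
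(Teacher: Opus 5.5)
Your proposal is correct. Steps 0 and 1 are exactly the paper's argument: the same polynomials $(s-U_j)(s-U_j+\lambda)$, and the same application of Proposition \ref{PAomega=0} to $M$ on $V_{n-p}$. The genuine difference is in Step 2.

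The paper stays on the side of forms. It expands $(\lambda\id+A^*)A^*\sigma^p$ by the derivation rule into $p\,\sigma^{p-1}\wedge(\lambda\id+A^*)A^*\sigma+p(p-1)\,\sigma^{p-2}\wedge(A^*\sigma)^2$. It then removes the powers of $\sigma$ with the Lefschetz decomposition (Lemma \ref{A1}), and evaluates the remaining terms via Lemmas \ref{dualEnd} and \ref{A2} to obtain $Q(A_0)+[A_0,A_1]=0$ for a quadratic $Q$. Finally, the trace argument of Lemma \ref{polyn} shows that $A|_{W_{p+1}}$ is normal with respect to $\sigma$.

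You instead transport the non-derivation operator $P(A^*)$ on $(p,p)$-forms to $2$-vectors via $\xi\mapsto\iota_\xi\mathrm{vol}$, and the bookkeeping you were worried about does check out:
\begin{itemize}
\item Applying \eqref{iotabracket} twice gives $A^*(\iota_\xi\mathrm{vol})=\iota_\xi\bigl((\tr A_{p+1})\mathrm{vol}\bigr)-\iota_{A\xi}\mathrm{vol}$, with the real trace.
\item $B=-A_{p+1}^{\rm T}$ commutes with the induced complex structure $(J\phi)(Z)=-\phi(JZ)$ on $W_{p+1}^*$, and its derivation action on $\Lambda^2W_{p+1}$ is $-A$.
\item With $\mathrm{vol}=\sigma^{p+1}/(p+1)!$ and $\iota_{Y\wedge Z}=\iota_Z\iota_Y$, one finds $\xi=p!\,\sigma^\sharp$, where $\sigma^\sharp$ is the fundamental form of the dual Hermitian metric. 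A different sign convention would be harmless, since the equation is linear in $\xi$.
\end{itemize}
So Proposition \ref{PAomega=0} applies verbatim a second time.

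Your route is shorter and more conceptual:
\begin{itemize}
\item It reuses Proposition \ref{PAomega=0} in place of the Lefschetz computations of the appendix and Lemma \ref{polyn}.
\item It does not even need Michelsohn's result, since the same duality shows directly that a transverse $(p,p)$-form in complex dimension $p+1$ is a definite $(1,1)$-bivector.
\item It gives for free the relations $(\tr A_{p+1}-u_j)(\tr A_{p+1}-u_j+\lambda)=0$ for $n-p\le j\le n$. These are exactly \eqref{416} for the $p$-subsets of $\{n-p,\dots,n\}$.
\end{itemize}
In exchange, the paper's route yields the stronger metric statement that $A|_{W_{p+1}}$ is normal with respect to $\sigma$ itself. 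The theorem does not need this.
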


\begin{proof}
Let $\Omega$ be a $p$-pluriclosed structure on $(\g,J)$ and denote $\Omega_J\coloneqq\rest{\Omega}{\abj}$. From \eqref{ddc} and \eqref{dOmega}, the condition $dd^c\Omega=0$ is equivalent to
\begin{equation}\label{daao}
    (\lambda\id +A^*)A^*\Omega_J=0.
\end{equation}

\textit{Step 1.} Let $\pg{X_2,\dots,X_n}\in(V^\C)^{1,0}$ be a Jordan basis for $A$, satisfying \eqref{jordandual}.
Define inductively $\psi_0=\Omega_J$, $\phi_0=A^*\Omega_J$, $\xi_0=A^*A^*\Omega_J$, and $\psi_j,\phi_j,\xi_j$ as in Lemma \ref{induction}. Then, by Lemma \ref{induction},
\begin{equation*}
    A^*\psi_j=U_j\psi_j+\phi_j,\quad
    A^*\phi_j=U_j\phi_j+\xi_j.
\end{equation*}
Combining these two equations, we get 
    \begin{equation*}
    A^*A^*\psi_j=U_j^2\psi_j+2U_j\phi_j+\xi_j=U_j^2\psi_j+(2U_j-\lambda)\phi_j,
    \end{equation*}
where the last equality holds because $\lambda\phi_0+\xi_0=0$ by \eqref{daao}.
Then, $P_j(A^*)\psi_j=0,$ with 
\begin{equation*}
    P_j(s)=s^2+(\lambda-2U_j)s+U_j(U_j-\lambda)=\pt{s-U_j}\pt{s-U_j+\lambda}.
\end{equation*}
For $j=p-1$, $\psi_{p-1}$ is the fundamental form associated to a Hermitian metric on $(V_{n-p}^*,J)$, so we are in the assumptions of Proposition \ref{PAomega=0}, for $M\in\End(V_{n-p})$ defined as in \eqref{endM}, with $M^*=\rest{A^*}{V_{n-p}^*}$.
By Proposition \ref{PAomega=0}, we conclude that $M$ is diagonalizable on $V_{n-p}$, namely
\begin{equation}\label{deltaPl}
    \delta_l=0,\qquad\forall\ l=2,\dots,n-p.
    \end{equation}
Consequently, $A$ preserves $V_{n-p-1}$, its restriction is diagonalizable, and $A^*$ preserves $W_k^*$, for all $k=p,\dots,n-1$.

\textit{Step 2.}
As in the proof of \Cref{pKahler}, $W_{p+1}$ is well defined since $p\le n-2$, and $\rest{\Omega_J}{W_{p+1}}=\sigma^p$, where $\sigma$ is the fundamental form of a Hermitian metric on the $(p+1)$-dimensional complex vector space $(W_{p+1}^\R,J)$.
Since $A^*$ preserves $W_{p+1}^*$ and $A$ preserves each $W_{k}$, we get $A^*|_{W_{p+1}^*}=A_{p+1}^*$, where $A_{p+1}:=\rest{A}{W_{p+1}}$, so 
\begin{equation}\label{AsigmapPl}
\begin{aligned}
    0\,=&\rest{\pt{(\lambda \id +A^*)A^*\Omega_J}}{W_{p+1}}=(\lambda \id +A_{p+1}^*)A_{p+1}^*\pt{\rest{\Omega_J}{W_{p+1}}}\\
    =&\,\,(\lambda \id +A_{p+1}^*)A_{p+1}^*\pt{\sigma^p}=(\lambda \id +A_{p+1}^*)\pt{p\pt{A_{p+1}^*\sigma}\wedge\sigma^{p-1}}\\
    =&\,\,p\,\sigma^{p-1}\wedge{(\lambda \id +A_{p+1}^*)A_{p+1}^*\sigma}+p(p-1)\,\sigma^{p-2}\wedge\pt{A_{p+1}^*\sigma}^2,
\end{aligned}
\end{equation}
using Lemma \ref{dualEnd} for the last equality.
Let us denote by $L$ the wedge product with $\sigma$, and by $\Lambda$ the Lefschetz operator associated to $\sigma$, namely the $\sigma$-adjoint of $L$.
Then, we are under the assumptions of Lemma \ref{A1} in the Appendix, so we conclude that
\begin{eqnarray*}
0&=&L\Lambda\pt{\pt{\lambda \id  +A_{p+1}^*}A_{p+1}^*\sigma}+(p-1)\pt{\pt{\lambda \id  +A_{p+1}^*}A_{p+1}^*\sigma+\Lambda(A_{p+1}^*\sigma)^2}\\
&=&J^\flat\Lambda\pt{2\pt{\lambda A_0+2A_0^2+\pq{A_0,A_1}}J}^\flat\\
&&+(p-1)\pt{\pt{2\pt{\lambda A_0+2A_0^2+\pq{A_0,A_1}}J}^\flat+\Lambda(A_{p+1}^*\sigma)^2},
\end{eqnarray*}
where  $A_0$ and $A_1$ denote the $\sigma$-symmetric and $\sigma$-skew-symmetric parts of $A_{p+1}$.
It follows  by Lemma \ref{A2} in the Appendix, together with the fact that $\tr \pq{A_0,A_1}=0,$ that there exists a quadratic polynomial $Q$ such that 
\begin{equation*}
    Q(A_0)+[A_0,A_1]=0.
\end{equation*}
By Lemma \ref{polyn}, we conclude that $[A_0,A_1]=0$, and so $A_{p+1}$ is diagonalizable over $\C$.
Together with Step 1, and more precisely \eqref{deltaPl}, this means that $A$ is diagonalizable over $\C$, and the theorem is proved.
\end{proof}

In order to state the characterization of $p$-pluriclosed structures, let $(\lambda,v,A)$ be a presentation of an almost abelian Lie algebra $(\g,\ab)$ of real dimension $2n\ge 8$ endowed with a complex structure $J$. Denote by ${z_2,\dots,z_n}\in\C$ the eigenvalues of $A$ as an endomorphism of $(\abj^\C)^{1,0}$, and by $u_j:=z_j+\bar z_j$, for $2\le j\le n$.
Then, the following holds:

\begin{theorem}\label{AdiagPL}
     There exists a $p$-pluriclosed structure on $(\g,J)$, for some $2\leq p\leq n-2$, if and only if $A$ is diagonalizable, and:
    \begin{enumerate}
        \item[$(i)$] either $p=n-2$, and there exists $k\in\{0,\dots,n-1\}$ such that 
        \begin{equation}
            u_2=\dots=u_{k+1}=t,\quad u_{k+2}=\dots=u_n=t+\lambda, \quad \mbox{for } t:=\frac{k+1-n}{n-2}\lambda,
        \end{equation}
        \vskip0.1cm 
        \item[$(ii)$] or $2\le p\le n-3$, and, up to reordering, one of the $4$ cases below holds:
        \begin{enumerate}
            \item[$(a)$] $u_2=\ldots=u_n=0$, 
            \item[$(b)$] $u_2=\ldots=u_n=-\frac{\lambda}{p}$,
            \item[$(c)$] $u_2=\ldots =u_{n-1}=0$, $u_n=-\lambda$, in which case $(\g,J)$ admits a pluriclosed metric,
            \item[$(d)$] $u_2=\ldots =u_{n-1}=-\frac{\lambda}{p}$, $u_n=\frac{p-1}{p}\lambda$.
        \end{enumerate}
    \end{enumerate}
\end{theorem}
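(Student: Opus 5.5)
By Theorem \ref{411}, any $p$-pluriclosed structure forces $A$ to be diagonalizable over $\C$. So throughout I assume this, and fix an eigenbasis $X_2,\dots,X_n$ of $(\abj^\C)^{1,0}$ with dual basis $\alpha_2,\dots,\alpha_n$. The condition $dd^c\Omega=0$ is equivalent to \eqref{daao}, namely $(\lambda\id+A^*)A^*\Omega_J=0$. Note that $\eta$ and $\Phi$ do not enter, so everything reduces to finding a transverse $(p,p)$-form $\Omega_J$ on $\abj$ satisfying \eqref{daao}. Write $\Omega_J=\sum c_{IK}\,\alpha_I\wedge\bar\alpha_K$ over multi-indices $I,K$ of length $p$. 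Then $A^*$ acts diagonally on each monomial, with eigenvalue $\sum_{i\in I}z_i+\sum_{k\in K}\bar z_k$. Hence \eqref{daao} says that each coefficient $c_{IK}$ is nonzero only when this eigenvalue is a root of $s(s+\lambda)$, i.e. lies in $\{0,-\lambda\}$.

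For necessity, I look at the diagonal coefficients $c_{II}$. By transversality each $c_{II}$ is nonzero (it is positive up to a normalization), which forces $u_I:=\sum_{i\in I}u_i\in\{0,-\lambda\}$ for every $p$-subset $I\subset\{2,\dots,n\}$.

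This is a purely combinatorial constraint on the real numbers $u_2,\dots,u_n$. Comparing subsets that differ in one element, any two $u_i,u_j$ differ by $0$ or $\pm\lambda$. I then analyze the constraint as follows.
\begin{enumerate}
\item[$\bullet$] If $\lambda=0$, all $p$-sums vanish. Since $p<n-1$, all $u_i$ are equal, hence all zero, giving case (a) (or, equivalently, (b)).
\item[$\bullet$] If $\lambda\neq0$, the $u_i$ take at most two values $t$ and $t+\lambda$. Say $t+\lambda$ occurs $m$ times. The $p$-sums then take the values $pt+r\lambda$, where $r$ ranges over $[\max(0,p-(n-1-m)),\min(p,m)]$. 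Requiring this range to be contained in $\{0,-\lambda\}$, i.e. to have at most two consecutive values matching $\{0,-\lambda\}$, severely restricts $m$.
\item[$\bullet$] For $p\le n-3$, this gives $m\in\{0,1\}$ or its mirror. This yields (a)--(d): the case where all sums equal $0$ or all equal $-\lambda$ gives (a) or (b), and the case with one exceptional value gives (c) or (d).
\item[$\bullet$] For $p=n-2$, a $p$-subset is the complement of a single index, so the condition is on $S-u_j$ with $S=\sum u_i$. This allows an arbitrary split $k$ versus $n-1-k$, and solving gives the stated $t=\frac{k+1-n}{n-2}\lambda$.
\end{enumerate}
I expect this appendix-type combinatorial lemma to be the most delicate part, mostly because of bookkeeping of the edge cases.

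For sufficiency, I take $\Omega_J:=\sum_{|I|=p} i^{p^2}\alpha_I\wedge\bar\alpha_I$, which is $\omega_0^p/p!$ for the Hermitian metric making the $X_j$ unitary. This form is transverse, and each of its monomials has $A^*$-eigenvalue $u_I$. In each listed case, the $u_I$ all lie in $\{0,-\lambda\}$, so \eqref{daao} holds. I then set $\Omega:=\Omega_J+i\alpha\wedge\bar\alpha\wedge\Phi$ with $\Phi$ a positive power, for instance $\omega_0^{p-1}$. I expect this $\Omega$ to be transverse, since it is a power of a Hermitian metric up to constants, namely $\omega^p/p!$ with $\omega=2i\alpha\wedge\bar\alpha+\omega_0$. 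By Corollary \ref{ddcCor}, $dd^c$ kills the $\alpha\wedge\bar\alpha$ part, so $dd^c\Omega=0$. In case (c) the spectrum condition of Proposition \ref{propSKT} holds, giving the pluriclosed metric claim.

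The main obstacle is necessity: verifying that the diagonal coefficients alone suffice, and that off-diagonal coefficients cannot rescue other configurations. They cannot, because the constraint comes from the $c_{II}\neq0$, which transversality guarantees. Hence the whole difficulty is the enumeration of real tuples all of whose $p$-sums lie in $\{0,-\lambda\}$.
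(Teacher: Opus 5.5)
Your proposal is correct and follows the paper's proof. Theorem \ref{411} gives diagonalizability. Evaluating \eqref{daao} on $X_{j_1},\bar X_{j_1},\dots,X_{j_p},\bar X_{j_p}$ and using transversality yields $u_I(\lambda+u_I)=0$ for all $|I|=p$. The combinatorics is Lemma \ref{Alemma}, and sufficiency uses $\omega_0^p$ for a metric making an eigenbasis unitary; your explicit extension $\omega^p/p!$ is a fine way to obtain a transverse $\Omega$ on $\g$. Your count of the possible numbers $r$ of larger values in a $p$-subset is a tidier bookkeeping of the paper's swap argument showing $k\in\{0,1,n-2,n-1\}$.
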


\begin{proof} Assume that $\Omega$ is a $p$-pluriclosed structure on $(\g,J)$ for some $2\leq p\leq n-2$. Then, $A$ is diagonalizable over $\C$ by Theorem \ref{411}. We denote by $X_j\in(\abj^\C)^{1,0}$ an eigenvector corresponding to $z_j$ for every $j\in\{2,\ldots,n\}$.
Then, for every choice of $2\le j_1<\dots<j_p\le n$, we obtain from \eqref{daao}:
\begin{equation*}
\begin{aligned}
       0=& (\lambda\id +A^*)A^*\Omega_J\pt{X_{j_1},\bar X_{j_1},\dots,X_{j_p},\bar X_{j_p}}\\
       =&\pt{\lambda+\sum_{k=1}^p u_{j_k}}\pt{\sum_{k=1}^p u_{j_k}}\Omega\pt{X_{j_1},\bar X_{j_1},\dots,X_{j_p},\bar X_{j_p}}.
\end{aligned}
\end{equation*}
Since $\Omega$ is transverse, we thus have 
\begin{equation}\label{416}
    \pt{\lambda+\sum_{j\in I} u_{j}}\pt{\sum_{j\in I} u_{j}}=0,
\end{equation}
for every multi-index $I\subset\pg{2,\dots,n}$ with $|I|=p$.
The thesis follows by Lemma \ref{Alemma} in the Appendix.

Conversely, if (i) or (ii) hold, Lemma \ref{Alemma} implies that \eqref{416} holds. Let $\omega_0$ be the fundamental 2-form of the Hermitian metric for which some basis of eigenvectors of $A$ is orthonormal. Then,  every transverse $(p,p)$-form $\Omega$ on $(\g,J)$ such that $\Omega|_{\abj}=\omega_0^p$ satisfies \eqref{daao}. Thus, by \eqref{ddc}, $dd^c\Omega=0$, so $\Omega$ is a $p$-pluriclosed structure.
\end{proof}

\begin{remark}
It follows from the proof above that if $(\g,J)$ admits a $p$-pluriclosed structure, for some $p>1$, then it also carries a Hermitian metric such that the $p$-th exterior power of its fundamental form is $dd^c$-closed.
In particular, if $(\g,J)$ admits a $(n-2)$-pluriclosed structure, then it also carries an astheno-Kähler metric.
\end{remark}

\begin{remark}\label{sum}
    In case (i) of Theorem \ref{AdiagPL} one has $u_2+\ldots+u_n=t$. Moreover, the Lie algebras corresponding to case (i) of Theorem \ref{AdiagPL} with $k=1$ are the same as those corresponding to case (iic) of Theorem \ref{AdiagPL}.
\end{remark}

\begin{remark}
    If $\lambda=0$, Theorem \ref{AdiagPL} shows that for every $p\le n-2$, $(\g,J)$ admits a $p$-pluriclosed structure if and only if $A$ is diagonalizable over $\C$ and has purely imaginary spectrum. In this case $\g$ is unimodular by Remark \ref{unimod}, and Proposition \ref{lemma17} shows that $(\g,J)$ admits a Kähler metric if and only if $v\in A(\abj)$. 
\end{remark}

\begin{remark} \label{r416}
    If $\lambda\neq 0$, among the Lie algebras occurring in Theorem \ref{AdiagPL}, the only unimodular ones are:
    \begin{itemize}
        \item (iic), which by Remark \ref{sum} is the same as (i) with $k=1$, or
        \item  (iid), when $n$ is odd and $p=\frac{n-1}{2}$.
    \end{itemize} 
\end{remark}
 We conclude this section by analyzing the existence of lattices in the simply connected Lie groups corresponding to the Lie algebras appearing in Remark \ref{r416}.
 
\begin{example}\label{ex1}
    Let $(\g,\ab)$ be the almost abelian Lie algebra with a complex structure $J$ and a presentation $(\lambda,v,A)$, with $\lambda\in\R^*$, $v=0$ and $A$ diagonalizable over $\C$, whose eigenvalues are in case (i), with $k=1$ (or (iic)) of Theorem \ref{AdiagPL}.
    Then, by Propositions \ref{propSKT} and \ref{410} and Theorem \ref{AdiagPL}, $(\g,J)$ admits $p$-pluriclosed structures, for $1\le p\le n-1$.
    By assumption, there is a basis $\mathcal{B}_\varphi$ of $\ab$ such that the matrix associated to the endomorphism $\varphi$ defining the structure of semidirect product of $\g$ is 
    \begin{equation}\label{matrixB}
        B:=\pt{\begin{array}{c|cccc}
        \lambda &       & &0 &   \\ \hline
                & A_2   &&&\\
         v       &       &A_3 &&\\
               &      & &\ddots &\\
                &&&&A_n
        \end{array}},
    \end{equation}
    where each $A_j$, $j=2,\dots,n$ is a $2\times2$ block of the following form:
    \begin{equation*}
        A_2=
        \begin{pmatrix}
            -\frac\lambda2  &   -b_2\\
            b_2            &   -\frac\lambda2
        \end{pmatrix},
        \quad\quad
        A_j=
        \begin{pmatrix}
            0   &   -b_j\\
            b_j &   0
        \end{pmatrix},
        \quad\quad
        j=3,\dots,n,
    \end{equation*}
  for some  $b_2,\dots,b_n\in\R$.
  We want to show that the $b_j$'s and $v$ can be chosen such that the corresponding simply connected Lie group admits a cocompact lattice.
By a result of Bock \cite{Bo16}, a unimodular almost abelian Lie group $G=\R\ltimes_\rho \R^{2n-1}$ with Lie algebra $\g=\R\ltimes_\varphi\ab$ admits a lattice if and only if there is a basis of $\ab=\R^{2n-1}$ and $s\in\R^*$ such that the matrix of $\rho(s)=e^{s\varphi}$ in that basis is in $\mathrm{SL}(2n-1,\Z)$.

Let $m,l_2,\dots,l_n\in\Z$ be integers with $m>0$.
The polynomial $\mathrm{X}^3+m\mathrm{X}-1$ has $2$ complex conjugate roots $e^{-r+i\theta},e^{-r-i\theta}$, $\theta\neq0$, and one real root, $e^{2r} $.
Thus its companion matrix is diagonalizable:
\begin{equation}\label{eqmatrix}
    \begin{pmatrix}
        0&0&1\\
        1&0&-m\\
        0&1&0
    \end{pmatrix}
    \sim
    \begin{pmatrix}
        e^{2r}&0&0\\
    0&e^{-r+i\theta}&0\\
    0&0&e^{-r-i\theta}
    \end{pmatrix}=:C.
\end{equation}
We set 
\begin{equation*}
s:=\frac{2r}\lambda  ,\qquad
    b_2:=\frac\theta s=\frac{\theta\lambda}{2r},\qquad
    b_j:=\frac{2\pi l_j}s=\frac{\pi\lambda l_j}{r}.
\end{equation*}
Then, $e^{s\lambda}=e^{2r}$, $e^{sA_j}=I_2$, for all $j=3,\dots,n,$ and 
\begin{equation*}
    e^{sA_2}
    =e^{-\frac{s\lambda}{2}}\begin{pmatrix}
    \cos(sb_2)&-\sin(sb_2)\\
    \sin(sb_2)&\cos(sb_2)
\end{pmatrix}
    =e^{-r}\begin{pmatrix}
    \cos\theta&-\sin\theta\\
    \sin\theta&\cos\theta
\end{pmatrix}
\sim
\begin{pmatrix}
    e^{-r+i\theta}&0\\
    0&e^{-r-i\theta}
\end{pmatrix}.
\end{equation*}
In particular, if $v=0$, the matrix of $e^{s\varphi}$ in the basis $\mathcal{B}_\varphi$ is
\begin{equation*}
    e^{sB}\sim
    \begin{pmatrix}
        C &0\\
        0&I_{2n-4}
    \end{pmatrix},
\end{equation*}
which by \eqref{eqmatrix} is conjugate to a matrix in $\mathrm{SL}(2n-1,\Z)$. Note that if the $l_j$'s are non-zero, the Lie algebra $\g$ does not have abelian factors.
\end{example}

\begin{example}
    Consider now a unimodular almost abelian Lie algebra $\g=\R\ltimes_\varphi\ab$ corresponding to case (iid) of Theorem \ref{AdiagPL}, with a presentation $(\lambda,v,A)$, satisfying $\lambda\neq0$ and $A$ diagonalizable over $\C$.
    By Remark \ref{r416}, $n:=\dim_\C\g=2p+1$ is odd, with $p\ge2$.
The matrix of $\varphi$ with respect to some basis of $\ab$ can be written as in \eqref{matrixB}, with
    \begin{equation*}
        A_2=
        \begin{pmatrix}
            \frac{p-1}{2p}\lambda  &   -b_2\\
            b_2            &   \frac{p-1}{2p}\lambda
        \end{pmatrix},
        \quad\quad
        A_j=
        \begin{pmatrix}
            -\frac{\lambda}{2p}   &   -b_j\\
            b_j &   -\frac{\lambda}{2p}
        \end{pmatrix},
        \quad\quad
        j=3,\dots,n,
    \end{equation*}
  for some  $b_2,\dots,b_n\in\R$.
  By Bock's criterion, a necessary condition for the  corresponding simply connected Lie group to admit a cocompact lattice is that the characteristic polynomial $\chi$ of $e^{s\varphi}$ has integer coefficients, for some $s\in\R^*$.
  Using the notation $\mu:=e^{-s\frac{\lambda}{2p}}\in(0,\infty)\setminus\pg1$, the moduli of the roots of $\chi$ are:
  \begin{itemize}
      \item  $\mu$, with multiplicity $4p-2$,
      \item $\mu^{1-p}$, with multiplicity $2$,
      \item $\mu^{-2p}$, with multiplicity $1$.
  \end{itemize}
  For $p=1$, we recover Example \ref{ex1}, in complex dimension $3$.
  For  $p\ge2$, it is an open problem to determine whether such a polynomial exists. 
\end{example}

\section{LCK and Bismut-Ricci flat metrics}\label{secLCK}

In this section we characterize almost abelian Lie algebras with complex structures $(\g,J)$ carrying LCK and Bismut-Ricci flat metrics, in terms of a given presentation.

\subsection{LCK metrics}
We recall the following result (first obtained in \cite{AO18}): 
\begin{lemma}\label{lck}
    On an almost abelian Lie algebra $(\g,\ab)$ of real dimension $2n$ with complex structure $J$ and presentation $(\lambda,v,A)$ determined by $y\in\ab\setminus\abj$, a Hermitian metric $g\in\mathcal{G}(y)$ is LCK if and only if 
    \begin{itemize}
        \item either $v=0$ and $A=\mu\id+A_1$, for some $\mu\in\R$ and some matrix $A_1$ skew-symmetric with respect to $g|_{\abj}$,
        \item or $n=2$ and $A=0$.
    \end{itemize} 
\end{lemma}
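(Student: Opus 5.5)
Write $\omega=2i\alpha\wedge\bar\alpha+\omega_0$ as in the proof of Lemma \ref{l14}, with $\omega_0=\rest{\omega}{\abj}$. By Corollary \ref{c13},
\[
d\omega=d\omega_0=-2i\alpha\wedge\bar\alpha\wedge\iota_v\omega_0-(\alpha+\bar\alpha)\wedge A^*\omega_0 .
\]
A Lee form $\theta$ of an LCK metric is closed. Since the closed $1$-forms on $\g$ are those vanishing on $[\g,\g]$, and $\theta$ is determined by $\omega$ when $n\ge 2$, I write a general $1$-form as $\theta=a\alpha+\bar a\bar\alpha+\beta$ with $a\in\C$ and $\beta\in\abj^*$. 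Then
\[
\theta\wedge\omega=2i\,\alpha\wedge\bar\alpha\wedge\beta+a\,\alpha\wedge\omega_0+\bar a\,\bar\alpha\wedge\omega_0+\beta\wedge\omega_0 .
\]
Comparing components with the expression for $d\omega$, and using that $\alpha$, $\bar\alpha$ and $\Lambda\abj^*_\C$ are independent, gives the following system. From the $\Lambda^3\abj^*$ part, $\beta\wedge\omega_0=0$. From the $\alpha\wedge\Lambda^2$ and $\bar\alpha\wedge\Lambda^2$ parts, $-A^*\omega_0=a\omega_0=\bar a\omega_0$, so $a=-t\in\R$ and $A^*\omega_0=t\omega_0$. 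From the $\alpha\wedge\bar\alpha$ part, $-\iota_v\omega_0=\beta$.

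The case split comes from $\beta\wedge\omega_0=0$. Wedging with $\omega_0$ is injective on $1$-forms when $\dim_\C\abj=n-1\ge 2$, i.e. $n\ge3$, so in that case $\beta=0$, hence $\iota_v\omega_0=0$ and $v=0$. Lemma \ref{dualEnd} then gives $2A_0J=tJ$, so $A_0=\frac t2\id$ and $A=\mu\id+A_1$ with $\mu=t/2$.

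For $n=2$, $\beta\wedge\omega_0$ is a $3$-form on a $2$-dimensional space, so this condition is empty and $\beta=-\iota_v\omega_0$ is free. We still get $A=\mu\id+A_1$; since $\abj$ is complex $1$-dimensional, $A$ is multiplication by a complex number in any case. The additional constraint is then closedness of $\theta$.

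The key step is to compute $d\theta$ with Corollary \ref{c13} and $d\alpha=\lambda\alpha\wedge\bar\alpha$:
\[
d\theta=a\lambda\alpha\wedge\bar\alpha-\bar a\lambda\alpha\wedge\bar\alpha-2i\alpha\wedge\bar\alpha\,\iota_v\beta-(\alpha+\bar\alpha)\wedge A^*\beta .
\]
Since $a$ is real, the first two terms cancel, and $\iota_v\beta=-\omega_0(v,v)=0$. Hence $d\theta=0$ if and only if $A^*\beta=0$.

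If $v=0$ then $\beta=0$ and we are in the first case of the statement. If $v\neq0$ then $\beta\neq0$, and $A^*\beta=0$ forces $A$ to be non-invertible. An endomorphism of the complex line $\abj$ commuting with $J$ is invertible unless it vanishes, so $A=0$. Conversely, when $A=0$ and $n=2$, any $v$ works by the same equations with $t=0$. I expect this $n=2$ bookkeeping, together with the check that $\iota_v\beta=0$ in $d\theta$, to be the only delicate point.

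For the converse in the first case, take $v=0$ and $A=\mu\id+A_1$. Lemma \ref{dualEnd} gives $A^*\omega_0=2\mu\omega_0$, and then $\theta=-2\mu\,\real(2\alpha)=-2\mu x^\flat$ is closed by \eqref{dxi}. This $\theta$ satisfies $d\omega=\theta\wedge\omega$, so $g$ is LCK.
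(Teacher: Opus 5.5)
Your proof is correct and follows essentially the same route as the paper: the same splitting of $\omega$ and of $\theta$, the same comparison of types leading to the system \eqref{elck} (your condition that $a$ is real is the paper's $c_y=0$), and the same case analysis, which uses $\theta_0\wedge\omega_0=0$ and the fact that a $J$-linear endomorphism of a complex line is either zero or invertible. The differences are cosmetic: you work with the complex coframe $\alpha,\bar\alpha$ instead of $x^\flat,y^\flat$, and you explicitly check that the $\alpha\wedge\bar\alpha$-component $\iota_v\beta=-\omega_0(v,v)$ of $d\theta$ vanishes, which the paper leaves implicit.
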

\begin{proof}
The fundamental form of $g$, defined by $\omega:=g(J\cdot,\cdot)$ can be  written as
    \[\omega=x^\flat\wedge y^\flat+\omega_0,\] 
    where  $\omega_0\in \Lambda^{1,1}\abj$ is  the fundamental form of $g|_{\abj}$. The structure $(g,J)$ is LCK if and only if there exists a closed 1-form $\theta\in\g^*$ such that $d\omega=\theta\wedge\omega$.
  By Corollary \ref{c13}, we have
    \[
d\omega=d\omega_0=-x^\flat\wedge y^\flat\wedge\iota_{v}\omega_0-x^\flat\wedge A^*\omega_0,
    \]
    with $x:=-Jy$.
    Then, expressing $\theta=c_xx^\flat+c_yy^\flat+\theta_0$, with $c_x,c_y\in\R$ and $\theta_0\in\abj^*$, we get 
    \[
    0=d\theta=-(c_y\lambda +\iota_v\theta_0)x^\flat\wedge y^\flat-x^\flat\wedge A^*\theta_0,\] and
    \[
    \theta\wedge\omega=x^\flat\wedge y^\flat\wedge\theta_0+\theta_0\wedge\omega_0+c_xx^\flat\wedge\omega_0+c_yy^\flat\wedge\omega_0.
    \]
    Comparing types, this shows that $(g,J)$ is LCK if and only if there exists a solution $(c_x,x_y,\theta_0)$ to the following system:
    \begin{equation}\label{elck}
\iota_{v}\omega_0=-\theta_0,\qquad A^*\omega_0=-c_x\omega_0,\qquad \theta_0\wedge\omega_0=0,\qquad c_y=0,\qquad A^*\theta_0=0.
    \end{equation} 
    Let us write $A=A_0+A_1$ with $A_0$ symmetric and $A_1$ skew-symmetric with respect to $g|_{\abj}$. By \eqref{ao}, the second equation in \eqref{elck} is equivalent to $A_0=-\frac{c_x}2\id$. 
    We distinguish two cases:
    
    If $v=0$, the first equation in \eqref{elck} is equivalent to $\theta_0=0$, and thus \eqref{elck} has a solution if and only if we are in the first case of the lemma. 
       
        If $v\neq 0$, and \eqref{elck} has a solution, the first equation shows that $\theta_0\neq 0$, so the third equation implies $n=2$, and the last equation shows that $A$ is non-invertible. Moreover, since $A=-\frac{c_x}2\id+A_1$ is a $2\times 2$ matrix, it is either $0$ or invertible, so the only possibility is $A=0$, and thus we are in the second case of the lemma. Conversely, if $n=2$ and $A=0$, the system \eqref{elck} has the solution $c_x=c_y=0,\ \theta_0=-\iota_v\omega_0$.
\end{proof}

We will now give a general criterion for an almost abelian Lie algebra $(\g,\ab)$ with complex structure $J$ to admit LCK metrics in terms of a presentation $(\lambda,v,A)$. 

\begin{proposition}\label{p53}
    Let $J$ be a complex structure on an almost abelian Lie algebra $(\g,\ab)$ with presentation $(\lambda,v,A)$ determined by some $y\in\ab\setminus\abj$. Then there exists an LCK metric on $(\g,J)$ if and only if:
    \begin{itemize}
        \item either $v\in (A-\lambda \id )(\abj)$ and $A$ is diagonalizable over $\C$, with all eigenvalues having the same real part, 
        \item or $n=2$ and $A=0$.
    \end{itemize} 
\end{proposition}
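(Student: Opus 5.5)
The plan follows the pattern of Propositions \ref{lemma17} and \ref{37}: translate the metric-level criterion of Lemma \ref{lck} into a presentation-level one using the change of presentation formula \eqref{changeTilde}. Throughout, let $(\lambda,v,A)$ be determined by $y\in\ab\setminus\abj$.

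\emph{Necessity.} Suppose $g$ is an LCK metric on $(\g,J)$. Let $\tilde y=cy+a$, with $c\in\R^*$ and $a\in\abj$, be a vector such that $g\in\mathcal G(\tilde y)$, and let $(\tilde\lambda,\tilde v,\tilde A)$ be the corresponding presentation. Lemma \ref{lck} applied to this presentation gives two cases.
\begin{itemize}
\item If $\tilde v=0$ and $\tilde A=\mu\id+A_1$ with $A_1$ skew-symmetric for $g|_{\abj}$, then \eqref{changeTilde} gives $A=c^{-1}\tilde A$. Hence $A$ is a real multiple of the identity plus a skew-symmetric endomorphism commuting with $J$. Such an $A$ is normal, so it is diagonalizable over $\C$, and all its eigenvalues have real part $\mu/c$. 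Moreover, $\tilde v=0$ means $cv+(A-\lambda\id)a=0$, so $v\in(A-\lambda\id)(\abj)$.
\item If $n=2$ and $\tilde A=0$, then $A=0$ as well.
\end{itemize}

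\emph{Sufficiency, first case.} Assume $A$ is diagonalizable over $\C$ with all eigenvalues of real part $\mu$, and that $v+(A-\lambda\id)a=0$ for some $a\in\abj$. Take $\tilde y:=y+a$. By \eqref{changeTilde}, the presentation it determines is $(\lambda,0,A)$. Choose a basis of $(\abj^\C)^{1,0}$ consisting of eigenvectors of $A$, and let $g_0$ be the Hermitian metric on $\abj$ making this basis unitary, as in the proof of Proposition \ref{propSKT}. Then $A$ is $g_0$-normal, so $A_0=\frac12(A+A^t)$ acts on each eigenvector $X_j$ by $\frac12(z_j+\bar z_j)=\mu$; this is the same computation as in Corollary \ref{corSKT}. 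Thus $A=\mu\id+A_1$ with $A_1$ skew-symmetric for $g_0$. The extension of $g_0$ to a metric in $\mathcal G(\tilde y)$ is then LCK by Lemma \ref{lck}.

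\emph{Sufficiency, second case.} If $n=2$ and $A=0$, then for any presentation determined by any $y$ we still have $A=0$. Any metric in $\mathcal G(y)$ is therefore LCK by the second case of Lemma \ref{lck}.

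\emph{Main point to check.} The only non-formal step is that "diagonalizable over $\C$ with eigenvalues of equal real part" is equivalent to "$\mu\id$ plus skew-symmetric for some Hermitian metric". This needs a unitary eigenbasis that is compatible with $J$. It is available because $A$ commutes with $J$, so it acts complex-linearly on $(\abj^\C)^{1,0}$, where we diagonalize it. Everything else is bookkeeping with \eqref{changeTilde}.
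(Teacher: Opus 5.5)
Your proposal is correct and follows essentially the same route as the paper. For necessity, you apply Lemma \ref{lck} to the presentation determined by the $\tilde y$ adapted to the LCK metric and transfer the conclusion back via \eqref{changeTilde}. For sufficiency, you take $\tilde y=y+a$ with $v+(A-\lambda\id)a=0$ and a Hermitian metric $g_0$ on $\abj$ for which $A-\mu\id$ is skew-symmetric. The only difference is that you construct $g_0$ explicitly from a unitary eigenbasis of $A$ on $(\abj^\C)^{1,0}$, whereas the paper only asserts that such a metric exists.
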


\begin{proof}
Assume first that $g$ is an LCK metric on $(\g,J)$ and let $\tilde y\in\ab\setminus\abj$ be the vector, unique up to sign, such that $g\in\mathcal{G}(\tilde y)$. 
Write $\tilde y=c y+a$ for some non-zero $c\in\R^*$ and $a\in\abj$, and let $(\tilde\lambda,\tilde v,\tilde A)$ be defined by \eqref{changeTilde}. 
By Lemma \ref{lck}, since $g$ is LCK, we either have $\tilde v=0$ and $\tilde A=\tilde\mu\id+\tilde A_1$, with $\tilde A_1$ skew-symmetric with respect to $g|_{\abj}$, or $n=2$ and $\tilde A=0$. 
In the first case, by \eqref{changeTilde}, this means that $\tilde v =cv+(A-\lambda \id )a=0$, so $v\in (A-\lambda \id )(\abj)$, and  $A=\mu\id+A_1$, where $\mu:=\frac1c\tilde\mu$, and $A_1:=\frac1c\tilde A_1$ is skew-symmetric as well with respect to $g|_{\abj}$. 
In particular, $A$ is diagonalizable over $\C$ and each eigenvalue has the same real part $\mu$. In the second case, we obtain directly $n=2$ and $A=0$.

Conversely, assume that either $v\in (A-\lambda \id )(\abj)$ and $A$ is diagonalizable over $\C$, with all eigenvalues having the same real part $\mu$, or $n=2$ and $A=0$. In the second case the conclusion follows from the second item of Lemma \ref{lck}. In the first case, the assumption on $A$ ensures the existence of a Hermitian metric $g_0$ on $\abj$ such that $A-\mu\id$ is skew-symmetric with respect to $g_0$, as well as the existence of $a\in \abj$ such that $v+(A-\lambda \id )a=0$.
Then, the vector $\tilde y\in\ab\setminus\abj$ defined by $\tilde y:=y+a$ determines a unique Hermitian metric $g\in\mathcal{G}(\tilde y)$ on $(\g,J)$ with $\rest{g}{\abj}=g_0$, and $g$ is LCK by the first item of Lemma \ref{lck}.
\end{proof}

\subsection{Bismut-Ricci flat metrics}
Let $(\g,J)$ be an almost abelian Lie algebra with complex structure, with presentation $(\lambda,v,A)$ determined by $y$. 
    Let $g$ be a Hermitian metric in $\mathcal{G}(y)$, and denote with $\rho^B$ and $\rho^C$ the Ricci forms of the Bismut and Chern connections of $(J,g)$ (see for instance \cite{fp23}), also called Bismut-Ricci form and Chern-Ricci form, respectively.
By \cite[Lemma 6.1]{LR}, the Chern-Ricci form can be written in terms of the presentation $(\lambda,v,A)$ as
\begin{equation*}
    \rho^C=-\frac\lambda 2(2\lambda+\tr A)\,x^\flat\wedge y^\flat,
\end{equation*}
where $x,y,\lambda$ in our notation correspond respectively to $e^{2n},-e^1,c$ in \cite{LR}.
\begin{remark}
We note that, by Corollary \ref{dkformJ}, $\lambda x^\flat\wedge y^\flat$ is exact, so $\rho^C$ is exact as well.
Consequently, the first Chern class of every left-invariant complex structure on an almost abelian solvmanifold is vanishing.
\end{remark}

On the other hand, by \cite[Equation (2.4)]{fp23},
\begin{equation}\label{BRform}
    \rho^B=-\pt{\lambda^2-\frac\lambda2\tr A+\lVert v\rVert^2_g}\,x^\flat\wedge y^\flat-x^\flat\wedge A^*v^\flat,
\end{equation}
where $A^*v^\flat=(A^tv)^\flat$, denoting with $\cdot^t$ transposition with respect to $g$, and $x,y,\lambda,v$ in our notation correspond respectively to $e^{2n},-e^1,a,-v$ in \cite{fp23}.

A direct consequence of \eqref{BRform} is that a necessary condition for an almost abelian Lie algebra with complex structure $(\g,J)$ to admit a Bismut-Ricci flat metric is that
    \begin{equation*}
\lambda\pt{2\lambda-\tr A}  \le0. 
    \end{equation*}
In the next proposition, we fully characterize the existence of Bismut-Ricci flat metrics, in terms of the presentation.

\begin{proposition}\label{pCYT}
    Let $J$ be a complex structure on an almost abelian Lie algebra $(\g,\ab)$ with presentation $(\lambda,v,A)$. 
    Then, $(\g,J)$ admits a Bismut-Ricci flat metric if and only if one of the following holds:
    \begin{enumerate}[label=(\roman*)]
        \item\label{BRf1}  $\lambda\pt{2\lambda-\tr A}=0$ and $v$ is in the image of $(A-\lambda \id)$, or
        \item\label{BRf2}  $\lambda\pt{2\lambda-\tr A}<0$ and  $v\in(A-\lambda \id)(\abj)+(\abj\setminus A(\abj))$.
        In particular, in this case $\ker A\neq \pg0$.
    \end{enumerate}
\end{proposition}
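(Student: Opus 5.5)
The plan is to reduce everything to the explicit formula \eqref{BRform}, first for metrics in a fixed $\mathcal{G}(y)$, and then to let $y$ vary using \eqref{changeTilde}. Since $x^\flat\wedge y^\flat$ and $x^\flat\wedge\abj^*$ are linearly independent pieces, \eqref{BRform} shows that a metric $g\in\mathcal{G}(y)$ is Bismut-Ricci flat if and only if two conditions hold:
\[
\lVert v\rVert_g^2=-\tfrac{\lambda}{2}(2\lambda-\tr A),\qquad A^tv=0 .
\]
Here $A^t$ is the $g|_{\abj}$-adjoint of $A$, so the second condition says that $v$ is $g$-orthogonal to $A(\abj)$.

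Every Hermitian metric lies in some $\mathcal{G}(\tilde y)$ with $\tilde y=cy+a$, where $c\in\R^*$ and $a\in\abj$. By \eqref{changeTilde} the corresponding presentation has $\tilde\lambda(2\tilde\lambda-\tr\tilde A)=c^2\lambda(2\lambda-\tr A)$. Moreover $\tilde A(\abj)=A(\abj)$, and $\tilde v=c^2\bigl(v+(A-\lambda\id)(a/c)\bigr)$. So the sign of $\lambda(2\lambda-\tr A)$ is an invariant, and the necessary condition $\lambda(2\lambda-\tr A)\le0$ noted before the statement splits the proof into the two cases of the proposition.

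\emph{Case $\lambda(2\lambda-\tr A)=0$.} A Bismut-Ricci flat metric $g\in\mathcal{G}(\tilde y)$ forces $\tilde v=0$ by the norm condition. Hence $v\in(A-\lambda\id)(\abj)$. Conversely, if $v=-(A-\lambda\id)a$, then $\tilde y=y+a$ gives $\tilde v=0$. In that case both conditions hold for every $g\in\mathcal{G}(\tilde y)$, so every such metric is Bismut-Ricci flat.

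\emph{Case $\lambda(2\lambda-\tr A)<0$.} Suppose $g\in\mathcal{G}(\tilde y)$ is Bismut-Ricci flat. The norm condition gives $\tilde v\neq0$, and $\tilde v$ is $g$-orthogonal to $\tilde A(\abj)=A(\abj)$, so $\tilde v\notin A(\abj)$. Dividing by $c^2$, which preserves the complement of the subspace $A(\abj)$, we get
\[
v=-(A-\lambda\id)(a/c)+\tilde v/c^2\in(A-\lambda\id)(\abj)+\bigl(\abj\setminus A(\abj)\bigr).
\]
Conversely, write $v=(A-\lambda\id)b+w$ with $w\notin A(\abj)$, and take $\tilde y=y-b$, so that $\tilde v=w$. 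We then need a Hermitian metric $g_0$ on $\abj$ with two properties: $w$ is $g_0$-orthogonal to $A(\abj)$, and $\lVert w\rVert_{g_0}^2=-\frac{\lambda}{2}(2\lambda-\tr A)>0$. Such a metric then extends to a metric in $\mathcal{G}(\tilde y)$ satisfying both conditions.

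This construction is the only step requiring care, and it is where I expect the main (though mild) obstacle. Since $A$ commutes with $J$, the image $A(\abj)$ is $J$-invariant, that is, a complex subspace. Because $w\notin A(\abj)$, the complex line $\operatorname{span}\{w,Jw\}$ meets $A(\abj)$ trivially. So I would choose a $J$-invariant complement of $A(\abj)$ containing $w$ and $Jw$, and take any Hermitian metric making this complement orthogonal to $A(\abj)$. Rescaling $g_0$ by a positive constant then adjusts $\lVert w\rVert^2$ to the required value; this is allowed because the class $\mathcal{G}(\tilde y)$ allows arbitrary $g|_{\abj}$ while keeping $x,y$ unit.

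Finally, in case \ref{BRf2} the vector $\tilde v\neq0$ satisfies $A^t\tilde v=0$. Hence $\ker A^t\neq\{0\}$, so $\det A=0$, which gives $\ker A\neq\{0\}$.
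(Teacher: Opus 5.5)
Your proposal is correct and follows essentially the same route as the paper. Both arguments use \eqref{BRform} to reduce Bismut-Ricci flatness of $g\in\mathcal{G}(\tilde y)$ to the norm condition together with $\tilde A^t\tilde v=0$, transport this to the fixed presentation via \eqref{changeTilde}, and split according to the sign of $\lambda(2\lambda-\tr A)$; your explicit construction of $g_0$ (a $J$-invariant complement of $A(\abj)$ containing $w$ and $Jw$, followed by rescaling) also fills in a step the paper only asserts.
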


\begin{proof}
Let $g$ be a Bismut-Ricci flat metric on $(\g,J)$, and let $\tilde y\in\ab\setminus\abj$ such that $g\in\mathcal{G}(\tilde y)$.
Write $\tilde y=c y+a$ for some $c\in\R^*$ and $a\in\abj$, and let $(\tilde\lambda,\tilde v,\tilde A)$ be defined by \eqref{changeTilde}. 
    Then, by \eqref{BRform}, we get
\begin{equation}\label{brfeq}
    \norm{\tilde v}^2_g=-\tilde\lambda\pt{\tilde\lambda-\frac{\tr\tilde A}{2}} ,
    \qquad
    \tilde A^t\tilde v=0.
\end{equation}
Since $\tilde A=cA,$ the second condition together with \eqref{changeTilde} reads
\begin{equation}\label{vImA}
    \tilde v = c(cv+(A-\lambda \id )a)\in\ker A^t=A(\abj)^{\perp_g}.
\end{equation}
Moreover, the first equality in \eqref{brfeq} yields $0\ge \tilde\lambda\pt{2\tilde\lambda-\tr\tilde A}=c^2\lambda\pt{2\lambda-\tr A}$.
If  $\lambda\pt{2\lambda-\tr A}=0$, then $\tilde v=0$, so $v$ is in the image of $(A-\lambda \id)$.
If $\lambda\pt{2\lambda-\tr A}<0$, by the first equation in \eqref{brfeq} we get $\tilde v\neq 0$, and since $A(\abj)^{\perp_g}\setminus\pg0\subset \abj\setminus A(\abj),$ we obtain from \eqref{vImA}
\begin{equation*}
    v=-(A-\lambda \id )\frac{a}{c}+\frac{\tilde v}{c^2}
    \in
    (A-\lambda \id)(\abj)+(\abj\setminus A(\abj)).
\end{equation*}
Note that, in this case, since $\tilde v$ is a non-zero vector orthogonal to the image of $A$, then $\ker A\neq\pg0$.

For the converse, let us deal with case \ref{BRf1} first.
Let $a\in\abj$ such that $v=-(A-\lambda \id )a$, and choose $\tilde y:=y+a.$ 
Then, the presentation  $(\tilde\lambda,\tilde v,\tilde A)$ determined by $\tilde y$ has $\tilde v=0$, by \eqref{changeTilde}.
Since $\tilde\lambda=\lambda$ and $\tilde A=A$, we also have $\tilde\lambda\pt{2\tilde\lambda-\tr\tilde A}=0$, so every Hermitian metric in $\mathcal{G}(\tilde y)$ is Bismut-Ricci flat, by \eqref{BRform}.

Assume now that item \ref{BRf2} holds, and let $v=w-(A-\lambda \id )a$, for some $a\in\abj$ and $w\in\abj\setminus A(\abj)$.
Furthermore, since $-\lambda\pt{2\lambda-\tr A}>0$ by \ref{BRf2}, there exists a Hermitian metric $g_0$ on $\abj$ such that 
\begin{equation*}
    g_0(w,A(\abj))=0,
    \qquad
    \norm{w}^2_{g_0}=-\lambda\pt{\lambda-\frac{\tr A}{2}}.
\end{equation*}
Let $\tilde y=y+a$, with presentation  $(\tilde\lambda,\tilde v,\tilde A)$ defined by \eqref{changeTilde}.
Then, $\tilde\lambda=\lambda$, $\tilde A=A$ and $\tilde v=v+(A-\lambda \id )a=w$.
It follows that the unique Hermitian metric in $\mathcal{G}(\tilde y)$ with $g|_{\abj}=g_0$ is Bismut-Ricci flat, by \eqref{BRform}.
\end{proof}

\appendix
\section{Technical results}\label{secApp}

This final section is devoted to the proof of some technical results needed in the main body of the paper. 

\subsection{Commutation relations}

We will prove here the commutation relations used in Proposition \ref{PAomega=0}.
\begin{lemma}
Let $V$ be a real vector space of dimension $2(n-1)$ endowed with a complex structure $J$, and $A\in\End(V)$, commuting with $J$.
Denote with $A^*$ be the extension of $A$ to $V^*$ given by \eqref{a*} and to $\Lambda V^*$ as a derivation, and fix a  Jordan basis $\pg{X_2,\dots,X_{n}}$ of $(V^\C)^{1,0}$ for $A$, satisfying \eqref{jordandual}. Define $\iota_k:=\iota_{X_k}$ and $\bar\iota_k:=\iota_{\bar X_k}$ for $k=2,\dots,n$.
Then, for all $j,k=2,\dots,n$, the following relations hold:
\begin{equation}\label{iotajkA}
\pq{\iota_j\bar \iota_k,A^*}=\iota_j\bar \iota_k\pt{z_j+\bar z_k}+\delta_j\iota_{j+1}\bar \iota_k+\delta_k\iota_j\bar \iota_{k+1}.
\end{equation}
Moreover, on the space of $2$-forms $\Lambda^2V^*$ we have:
\begin{equation}\label{iotajkAA2}
\begin{aligned}
\pq{\iota_j\bar \iota_k,A^*A^*}=&\pt{z_j+\bar z_k}^2\iota_j\bar \iota_k
   +2\pt{z_j+\bar z_k}\pt{\delta_j\iota_{j+1}\bar \iota_k+\delta_k\iota_j\bar \iota_{k+1}}\\
    &+2\delta_j\delta_k \iota_{j+1}\bar \iota_{k+1}+\delta_j\delta_{j+1}\iota_{j+2}\bar \iota_{k}+\delta_k\delta_{k+1}\iota_{j}\bar \iota_{k+2}.
\end{aligned}
\end{equation}
\end{lemma}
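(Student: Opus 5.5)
The plan is to derive both identities from the single-contraction relation \eqref{iotajA}, i.e. $\pq{\iota_k,A^*}=z_k\iota_k+\delta_k\iota_{k+1}$ and $\pq{\bar\iota_k,A^*}=\bar z_k\bar\iota_k+\delta_k\bar\iota_{k+1}$, which is \eqref{iotabracket} read in the Jordan basis \eqref{jordandual}. Throughout we use the conventions $\delta_m=0$ and $\iota_m=\bar\iota_m=0$ for $m>n$, together with the anticommutation of contractions, $\iota_a\bar\iota_b=-\bar\iota_b\iota_a$.

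For \eqref{iotajkA} we apply the graded Leibniz rule for commutators:
\[
\pq{\iota_j\bar\iota_k,A^*}=\iota_j\pq{\bar\iota_k,A^*}+\pq{\iota_j,A^*}\bar\iota_k .
\]
Substituting \eqref{iotajA} into each bracket gives
\[
\iota_j(\bar z_k\bar\iota_k+\delta_k\bar\iota_{k+1})+(z_j\iota_j+\delta_j\iota_{j+1})\bar\iota_k ,
\]
which is \eqref{iotajkA} after collecting terms. It will be convenient to record this as $\pq{\iota_j\bar\iota_k,A^*}=T(\iota_j\bar\iota_k)$, where $T$ is the linear operator on the span of the operators $\iota_a\bar\iota_b$ defined by
\[
T(\iota_a\bar\iota_b)=(z_a+\bar z_b)\iota_a\bar\iota_b+\delta_a\iota_{a+1}\bar\iota_b+\delta_b\iota_a\bar\iota_{b+1}.
\]

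For \eqref{iotajkAA2} we use the identity
\[
\pq{B,A^*A^*}=\pq{B,A^*}A^*+A^*\pq{B,A^*}=2\pq{B,A^*}A^*-\pq{\pq{B,A^*},A^*}.
\]
Evaluated on a $2$-form, $\iota_j\bar\iota_k$ and all operators $\iota_a\bar\iota_b$ produce scalars, on which $A^*$ acts as $0$. Hence $A^*\pq{B,A^*}=0$ on $\Lambda^2V^*$, and the commutator reduces to $\pq{\iota_j\bar\iota_k,A^*}A^*=T(\iota_j\bar\iota_k)A^*$. Applying \eqref{iotajkA} once more to each term of $T(\iota_j\bar\iota_k)$, each commutator $\iota_a\bar\iota_bA^*$ equals $\pq{\iota_a\bar\iota_b,A^*}=T(\iota_a\bar\iota_b)$ on $2$-forms, again because $A^*\iota_a\bar\iota_b=0$ there. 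So on $\Lambda^2V^*$ we get
\[
\pq{\iota_j\bar\iota_k,A^*A^*}=T^2(\iota_j\bar\iota_k),
\]
and it remains to expand $T^2$.

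The expansion of $T^2(\iota_j\bar\iota_k)$ is the one step that needs care. The diagonal part contributes $(z_j+\bar z_k)^2\iota_j\bar\iota_k$. The cross terms contain eigenvalue sums such as $z_{j+1}+\bar z_k$, which must be replaced using $\delta_j(z_j-z_{j+1})=0$ from \eqref{jordandual}, so that $\delta_j z_{j+1}=\delta_j z_j$ and likewise for $k$. After this replacement the cross terms combine to $2(z_j+\bar z_k)(\delta_j\iota_{j+1}\bar\iota_k+\delta_k\iota_j\bar\iota_{k+1})$. The second-order terms are $\delta_j\delta_{j+1}\iota_{j+2}\bar\iota_k$, $\delta_k\delta_{k+1}\iota_j\bar\iota_{k+2}$, and the mixed term $\delta_j\delta_k\iota_{j+1}\bar\iota_{k+1}$, which appears twice. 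Collecting these gives exactly \eqref{iotajkAA2}. The main point to watch is the boundary convention at index $n$, where the terms with $\iota_{n+1}$ or $\iota_{n+2}$ must vanish, which they do since $\delta_n=0$ and those contractions are set to zero.
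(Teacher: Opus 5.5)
Your proposal is correct and follows essentially the paper's proof. The first identity comes from the Leibniz rule together with \eqref{iotajA}. For the second, you reduce $[\iota_j\bar\iota_k,A^*A^*]$ on $2$-forms to the iterated bracket $[[\iota_j\bar\iota_k,A^*],A^*]$, which is your $T^2$, because $A^*$ annihilates scalars; you then expand it using $\delta_j z_{j+1}=\delta_j z_j$, exactly as the paper does.
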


\begin{proof}
By using \eqref{iotajA} twice, we get:
    \begin{equation*}
        \pq{\iota_j\bar \iota_k,A^*}=\iota_j\pq{\bar \iota_k,A^*}+\pq{\iota_j,A^*}\bar \iota_k=\iota_j\pt{\bar z_k\bar\iota_k+\delta_k\bar \iota_{k+1}}+\pt{z_j \iota_j+\delta_j\iota_{j+1}}\bar\iota_{k},
    \end{equation*}
which proves \eqref{iotajkA}.
Similarly, we can write
\begin{equation}\label{jkAA}
\begin{aligned}
    \pq{\iota_j\bar \iota_k,A^*A^*}
    =\pq{\iota_j\bar \iota_k,A^*}A^*+A^*\pq{\iota_j\bar \iota_k,A^*}
    &=2\pq{\iota_j\bar \iota_k,A^*}A^*+\pq{A^*,\pq{\iota_j\bar \iota_k,A^*}}\\
    &=\pq{\pq{\iota_j\bar \iota_k,A^*},A^*}+2A^*\pq{\iota_j\bar \iota_k,A^*}.
\end{aligned}
\end{equation}
Applying \eqref{iotajkA} twice, we obtain:
\begin{eqnarray*}
    \pq{\pq{\iota_j\bar \iota_k,A^*},A^*}
    &=&\pt{z_j+\bar z_k}\pq{\iota_j\bar \iota_k,A^*}+\delta_j\pq{\iota_{j+1}\bar \iota_k,A^*}+\delta_k\pq{\iota_{j}\bar \iota_{k+1},A^*}\\
    &=&\pt{z_j+\bar z_k}^2\iota_j\bar \iota_k+\delta_j\pt{z_j+\bar z_k}\iota_{j+1}\bar \iota_k+\delta_k\pt{z_j+\bar z_k}\iota_j\bar \iota_{k+1}\\
    &&+\delta_j\pt{\pt{z_{j+1}+\bar z_k}\iota_{j+1}\bar \iota_k+\delta_{j+1}\iota_{j+2}\bar \iota_{k}+\delta_{k}\iota_{j+1}\bar \iota_{k+1}}\\
    &&+\delta_k\pt{\pt{z_{j}+\bar z_{k+1}}\iota_{j}\bar \iota_{k+1}+\delta_{j}\iota_{j+1}\bar \iota_{k+1}+\delta_{k+1}\iota_{j}\bar \iota_{k+2}}\\
    &=&\pt{z_j+\bar z_k}^2\iota_j\bar \iota_k
    +2\delta_j\pt{z_j+\bar z_k}\iota_{j+1}\bar \iota_k
    +2\delta_k\pt{z_j+\bar z_k}\iota_j\bar \iota_{k+1}\\
    &&+2\delta_j\delta_k \iota_{j+1}\bar \iota_{k+1}+\delta_j\delta_{j+1}\iota_{j+2}\bar \iota_{k}+\delta_k\delta_{k+1}\iota_{j}\bar \iota_{k+2},
\end{eqnarray*}
where in the last equality we used that $\delta_j z_j=\delta_jz_{j+1}$ by \eqref{jordandual}, for all $j=2,\dots,n-1$.
If $\tau$ is a $2$-form, then $\pq{\iota_j\bar \iota_k,A^*}\tau$ is a constant, and since $A^*$ acts trivially on $\Lambda^0V^*=\R$, from the second line of \eqref{jkAA} we obtain \eqref{iotajkAA2}.
\end{proof}

\subsection{The Lefschetz operator}

On a real vector space $W$ endowed with a Hermitian structure $(J,h,\sigma)$, we denote with $L=L_\sigma$ the wedge product by $\sigma$, and by $\Lambda=\Lambda_\sigma$ its dual, called the Lefschetz operator.

\begin{lemma}\label{A1}
Let $W$ be a real vector space of dimension $2(p+1)$, endowed with a Hermitian structure $(J,h,\sigma)$.
Then, for every $\beta\in\Lambda^{1,1}W^*$ and $\nu\in\Lambda^{2,2}W^*$, if 
\begin{equation*}
    L^{p-1}\beta+(p-1)L^{p-2}\nu=0,
\end{equation*}
then 
\begin{equation}
    L\Lambda\beta+(p-1)(\beta+\Lambda\nu)=0.
\end{equation}
\end{lemma}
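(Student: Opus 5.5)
The plan is to apply the adjoint operator $\Lambda^{p-1}$ to the hypothesis and then use the $\mathfrak{sl}_2$ commutation relations for $(L,\Lambda,H)$ on $\Lambda W^*$. The operator $H$ acts on $k$-forms as multiplication by $(k-(p+1))$, since the complex dimension is $m=p+1$. The relation we need is $[L,\Lambda]=H$, or rather its iterated form
\[
[\Lambda, L^r]\gamma = r\,(m-k-r+1)\,L^{r-1}\gamma \qquad\text{for a $k$-form } \gamma .
\]
This identity follows by induction on $r$ from $[\Lambda,L]=(m-k)$ on $k$-forms; we fix this sign convention so that it matches the statement.

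We do not apply $\Lambda^{p-1}$ directly, because that would collapse everything to degree $2$ and lose information. Instead we apply $\Lambda^{p-2}$, which brings everything down to $4$-forms. Write the hypothesis as the $2p$-form identity $L^{p-1}\beta+(p-1)L^{p-2}\nu=0$. The key point is that in complex dimension $p+1$ the map $L^{p-2}\colon\Lambda^4\to\Lambda^{2p}$ is an isomorphism, by hard Lefschetz ($4+2p=2m+2$, which is symmetric about the middle degree). The identity can therefore be rewritten as $L^{p-2}\bigl(L\beta+(p-1)\nu\bigr)=0$, which gives
\[
L\beta+(p-1)\nu=0
\]
as $4$-forms.

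It then only remains to apply $\Lambda$ once. We have $\Lambda L\beta = L\Lambda\beta + (m-2)\beta = L\Lambda\beta+(p-1)\beta$, since $\beta$ is a $2$-form and $m-2=p-1$. Hence
\[
0=\Lambda\bigl(L\beta+(p-1)\nu\bigr)=L\Lambda\beta+(p-1)\beta+(p-1)\Lambda\nu ,
\]
which is exactly the claim.

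The main obstacle is justifying the injectivity of $L^{p-2}$ on $4$-forms when $\dim_\C W=p+1$. This is the standard linear-algebra hard Lefschetz statement: $L^{m-k}\colon\Lambda^k\to\Lambda^{2m-k}$ is an isomorphism, applied here with $k=4$, $m-k=p-3$. Careful: that gives $L^{p-3}$, not $L^{p-2}$. We must recheck the degrees. $\nu$ is a $4$-form and $L^{p-2}\nu$ has degree $2p$, but $2m=2p+2$, so $2p=2m-2$. This is the degree of $L^{m-2}$ applied to a $2$-form, and it is not the isomorphism degree for $4$-forms.

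So we correct the plan. Apply $\Lambda^{p-2}$ to the identity instead, using the iterated commutator to express $\Lambda^{p-2}L^{p-2}$ on primitive components. Decompose $\beta=\beta_0+b\sigma$ and $\nu=\nu_0+L\nu_1+c\sigma^2$ by Lefschetz decomposition, with $\beta_0,\nu_1$ primitive $2$-forms, and $\nu_0$ a primitive $4$-form. The term $\nu_0$ satisfies $L^{p-2}\nu_0=0$ because $p-2>m-4$, and $\Lambda\nu_0=0$, so it drops out of both equations.

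Then compare the coefficients of $L^{p-1}\beta_0$ and $\sigma^p$ in the hypothesis. These are independent, since $L^{p-1}$ is injective on primitive $2$-forms ($p-1=m-2$). This yields two scalar and primitive relations. Finally, compute $L\Lambda\beta+(p-1)(\beta+\Lambda\nu)$ componentwise using $\Lambda L^r$ on primitives, and check that it vanishes. The bookkeeping of these Lefschetz constants is the routine part of the argument.
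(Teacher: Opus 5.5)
Your corrected plan is sound, and the bookkeeping you defer does close. With $m=p+1$, the hypothesis reads $L^{p-1}\bigl(\beta_0+(p-1)\nu_1\bigr)+\bigl(b+(p-1)c\bigr)\sigma^p=0$, so $\beta_0=-(p-1)\nu_1$ and $b=-(p-1)c$. Using $\Lambda\sigma=p+1$, $\Lambda L\nu_1=(p-1)\nu_1$ and $\Lambda\sigma^2=2p\,\sigma$, the target becomes $(p-1)\bigl(\beta_0+(p-1)\nu_1\bigr)+2p\bigl(b+(p-1)c\bigr)\sigma=0$. The remark about applying $\Lambda^{p-2}$ is vestigial and can be dropped.

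The paper uses the same ingredients with a coarser splitting. It writes only $(p-1)\nu=\nu_0+L\gamma$, with $\nu_0$ primitive and $\gamma$ an arbitrary $(1,1)$-form. Injectivity of $L^{p-1}$ on \emph{all} $2$-forms then gives $\beta+\gamma=0$, hence $\Lambda\bigl(L\beta+(p-1)\nu\bigr)=\Lambda L(\beta+\gamma)=0$. After that it needs only the single relation $[\Lambda,L]=p-1$ on $2$-forms, and never computes $\Lambda\sigma$ or $\Lambda\sigma^2$.

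Your abandoned first attempt was one observation away from this. Instead of the false claim $L\beta+(p-1)\nu=0$, note that $L\beta+(p-1)\nu$ lies in $\ker L^{p-2}|_{\Lambda^4}$. Since $L^{p-2}$ kills primitive $4$-forms and $L^{p-2}L=L^{p-1}$ is injective on $\Lambda^2$, this kernel is exactly the space of primitive $4$-forms, i.e.\ $\ker\Lambda|_{\Lambda^4}$. And $\Lambda\bigl(L\beta+(p-1)\nu\bigr)=0$ is all your final displayed computation uses.

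Your finer decomposition costs a few more Lefschetz constants, but it makes the two independent constraints (on the primitive part and on the trace part) explicit.
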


\begin{proof}
    We recall that $\Lambda^{2,2}W^*=\Lambda_0^{2,2}\oplus L\pt{\Lambda^{1,1}W^*}$, where
    \begin{equation*}
        \Lambda_0^{2,2}:=\ker(\Lambda)\cap\Lambda^{2,2}W^*=\ker\pt{L^{p-2}}.
    \end{equation*}
    Then, we can write $(p-1)\nu=\nu_0+L\nu_1$, $\nu_0\in\Lambda_0^{2,2}$, $\nu_1\in\Lambda^{1,1}W^*$, and 
    \begin{equation*}
        0= L^{p-1}\beta+(p-1)L^{p-2}\nu
         = L^{p-1}\beta+L^{p-2}(\nu_0+L\nu_1)
         = L^{p-1}\pt{\beta+\nu_1}.
    \end{equation*}
    Since the restriction of $L^{p-1}$ to the space of $2$-forms is an isomorphism with the space of $2p$-forms, it follows that $\beta+\nu_1=0$, so
    \begin{equation*}
        0=\Lambda L(\beta+\nu_1)=\Lambda(L\beta+(p-1)\nu).
    \end{equation*}
    Using the commutation relation between $\Lambda$ and $L$, given by 
\begin{equation*}
    \pq{\Lambda,L}|_{\Lambda^{2}W^*}=(p-1)\id ,
\end{equation*}
the statement follows.
\end{proof}

We will now obtain the result about Lefschetz operators needed in the proof of Theorem \ref{411}.
\begin{lemma}\label{A2}
    Let $W$ be a real vector space endowed with a Hermitian structure $(J,h)$, and $M$ an endomorphism of $W$ commuting with $J$.
 If $M_0$ is the $h$-symmetric component of $M$, then 
 \begin{enumerate}
        \item $\Lambda\pt{2M_0J}^\flat=\tr M$.
        \item\label{LMsigma} $\Lambda\pt{\pt{M^*\sigma}^2}=4\pt{\pt{M_0\tr M_0-2M_0^2}J}^\flat$,
    \end{enumerate}
    where $\cdot^\flat$ denotes the $h$-dual of an endomorphism.
\end{lemma}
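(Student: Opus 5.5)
The plan is to diagonalize $M_0$ in a unitary basis, which reduces both identities to computations with a few elementary $(1,1)$-forms.

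\textbf{Setup.} Since $J$ is $h$-skew and commutes with $M$, the $h$-symmetric part $M_0$ also commutes with $J$. Hence $M_0$ is a $J$-linear $h$-symmetric endomorphism and can be diagonalized in an $h$-orthonormal basis of the form $\{e_1,Je_1,\dots,e_m,Je_m\}$, where $2m=\dim W$, with $M_0e_k=m_ke_k$ and $M_0Je_k=m_kJe_k$ for real numbers $m_k$. Let $\sigma_k$ be the fundamental form of $h$ restricted to the complex line spanned by $\{e_k,Je_k\}$, so that $\sigma=\sum_k\sigma_k$. Directly from the definition $(M_0J)^\flat=h(M_0J\cdot,\cdot)$ one gets $(M_0J)^\flat=\sum_k m_k\sigma_k$. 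More generally, for any polynomial $Q$, $(Q(M_0)J)^\flat=\sum_k Q(m_k)\sigma_k$. By Lemma \ref{dualEnd} we also have $M^*\sigma=(2M_0J)^\flat=2\sum_k m_k\sigma_k$.

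\textbf{Action of $\Lambda$.} I fix the normalization of $\Lambda$ consistent with the commutation relation $[\Lambda,L]=(m-k)\id$ on $k$-forms, which is the one used in Lemma \ref{A1} (there $m=p+1$ and $k=2$). With this normalization, $\Lambda\sigma_k=1$ for each $k$. Moreover, for $k\neq l$, $\Lambda(\sigma_k\wedge\sigma_l)=\sigma_k+\sigma_l$. This follows because $\Lambda$ is the contraction with the bivector dual to $\sigma$, which splits as a sum of the bivectors dual to the $\sigma_j$; the $j$-th piece kills $\sigma_k\wedge\sigma_l$ unless $j\in\{k,l\}$. I would verify this normalization once on a complex $2$-plane, and this bookkeeping is the only delicate point of the proof.

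\textbf{Part (1).} We compute
\[
\Lambda(2M_0J)^\flat=2\sum_k m_k .
\]
The real trace of $M_0$ counts each $m_k$ twice, once on $e_k$ and once on $Je_k$, so this equals $\tr M_0$. Finally $\tr M_0=\tr M$, because $M_1$ is skew and therefore traceless.

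\textbf{Part (2).} Set $S:=\sum_l m_l=\tfrac12\tr M_0$. Since $\sigma_k\wedge\sigma_k=0$, we have
\[
(M^*\sigma)^2=8\sum_{k<l}m_km_l\,\sigma_k\wedge\sigma_l .
\]
Applying $\Lambda$ gives
\[
8\sum_{k<l}m_km_l(\sigma_k+\sigma_l)=8\sum_k m_k(S-m_k)\sigma_k .
\]
On the other hand, the formula for $(Q(M_0)J)^\flat$ gives
\[
4\big((M_0\tr M_0-2M_0^2)J\big)^\flat=4\sum_k(2Sm_k-2m_k^2)\sigma_k ,
\]
which is the same expression. This proves (2).
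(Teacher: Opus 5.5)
Your proof is correct, but it takes a different route from the paper's.

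The paper never diagonalizes $M_0$. It fixes an arbitrary $h$-orthonormal basis and writes $\Lambda=\frac12\sum_j\iota_{Je_j}\iota_{e_j}$. It then expands $\Lambda\big((JM_0)^\flat\wedge(JM_0)^\flat\big)$ using the derivation property of contractions, which gives
\[
4\Big(-\sum_j (M_0e_j)^\flat\wedge(JM_0e_j)^\flat+\tr M_0\,(JM_0)^\flat\Big).
\]
Finally it identifies the sum with $2(M_0^2J)^\flat$ by contracting with an arbitrary vector $w$. That computation is basis-independent and avoids the spectral theorem. The price is the slightly opaque last identity.

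You instead use that $M_0$ is a Hermitian operator on $(W,J)$, so it is unitarily diagonalizable in a basis $\{e_k,Je_k\}$. This reduces everything to the family $\sigma_k$, on which $\Lambda$ acts by two simple rules: $\Lambda\sigma_k=1$ and $\Lambda(\sigma_k\wedge\sigma_l)=\sigma_k+\sigma_l$ for $k\neq l$. With these rules both terms of the formula become transparent. The $\tr M_0$ and $-2M_0^2$ contributions come from $\sum_{l\neq k}m_l=S-m_k$.

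The normalization check you deferred is exactly the paper's convention. In the basis $\{e_k,Je_k\}$ one has $\frac12\sum_j\iota_{Je_j}\iota_{e_j}=\sum_k\iota_{Je_k}\iota_{e_k}$, which yields both of your rules immediately.
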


\begin{proof}
    Assume that $W$ has real dimension $2p+2$, and let $\pg{e_1,\dots,e_{2p+2}}$ be a $h$-orthonormal basis of $W$.
    Then,
    \begin{equation}
        \Lambda=\frac12\sum_{j=1}^{2p+2} \iota_{Je_j}\iota_{e_j},
    \end{equation}
    so that
    \begin{eqnarray*}
        \Lambda\pt{2M_0J}^\flat
        &=&\frac12\sum_{j=1}^{2p+2} \iota_{Je_j}\iota_{e_j}\pt{2M_0J}^\flat
        =\sum_{j=1}^{2p+2} \pt{JM_0}^\flat(e_j,Je_j)\\
        &=&\sum_{j=1}^{2p+2} h(JM_0e_j,Je_j)
        =\sum_{j=1}^{2p+2} h(M_0e_j,e_j)=\tr M_0=\tr M,
    \end{eqnarray*}
proving the first part of the statement.

For the second part, we recall that by Lemma \ref{dualEnd}, $M^*\sigma=\pt{2M_0J}^\flat$, so we get
\begin{equation}\label{Msigma}
\begin{aligned}
    \Lambda\pt{\pt{M^*\sigma}^2}
    =&\,\,4\Lambda\pt{\pt{JM_0}^\flat\wedge\pt{JM_0}^\flat}
    =2\sum_{j=1}^{2p+2} \iota_{Je_j}\iota_{e_j}\pt{\pt{JM_0}^\flat\wedge\pt{JM_0}^\flat}\\
    =&\,\,
    2\sum_{j=1}^{2p+2} \iota_{Je_j}\pt{2\,\pt{JM_0}^\flat\wedge\iota_{e_j}\pt{JM_0}^\flat}
    \\
    =&\,\,
    4\sum_{j=1}^{2p+2} \pt{\iota_{Je_j}\pt{JM_0}^\flat\wedge\iota_{e_j}\pt{JM_0}^\flat+\pt{JM_0}^\flat\iota_{Je_j}\iota_{e_j}\pt{JM_0}^\flat}
    \\
    =&\,\,
    4\pt{\sum_{j=1}^{2p+2} \pt{JM_0Je_j}^\flat\wedge\pt{JM_0e_j}^\flat+\pt{JM_0}^\flat\sum_{j=1}^{2p+2}\iota_{Je_j}\iota_{e_j}\pt{JM_0}^\flat}
    \\
    =&\,\,
    4 \pt{-\sum_{j=1}^{2p+2} \pt{M_0e_j}^\flat\wedge\pt{JM_0e_j}^\flat+\tr M_0\pt{JM_0}^\flat}.
\end{aligned}
\end{equation}
It remains to compute the first part of the last line.
For this, we fix any $w\in W$, and we have
\begin{eqnarray*}
    \iota_w\pt{\pt{M_0e_j}^\flat\wedge\pt{JM_0e_j}^\flat}
    &=&\pt{\iota_w\pt{M_0e_j}^\flat}\pt{JM_0e_j}^\flat-\pt{M_0e_j}^\flat\,\iota_w\pt{JM_0e_j}^\flat\\
    &=&h(M_0e_j,w)\pt{JM_0e_j}^\flat-\pt{M_0e_j}^\flat\, h(JM_0e_j,w),
\end{eqnarray*}
so that
\begin{eqnarray*}
    \sum_{j=1}^{2p+2}\iota_w\pt{\pt{M_0e_j}^\flat\wedge\pt{JM_0e_j}^\flat}
    &=&\pt{\sum_{j=1}^{2p+2}h(M_0e_j,w)JM_0e_j-{M_0e_j}h(JM_0e_j,w)}^\flat\\
    &=&\pt{\sum_{j=1}^{2p+2}h(e_j,M_0w)JM_0e_j+{M_0e_j}h(e_j,M_0Jw)}^\flat\\
    &=&\pt{\pt{JM_0^2+M_0^2J}w}^\flat=\iota_w\pt{2\pt{M_0^2J}^\flat},
\end{eqnarray*}
where in the last equality we used that $M_0$ commutes with $J$.
Combining this with the last line in \eqref{Msigma}, we obtain the second part of the statement.
\end{proof}

\subsection{A quadratic system}
We conclude the section computing the solutions to the quadratic system, as in the statement of \Cref{AdiagPL}.

\begin{lemma}\label{Alemma}
    Let $n,p$ be integers with $2\le p\le n-2$, and let $\lambda,u_2,\dots,u_n$ be real numbers. Then
\begin{equation}\label{realsum}
    \pt{\lambda+\sum_{j\in I} u_{j}}\pt{\sum_{j\in I} u_{j}}=0,\quad\quad\text{for all }I\subset\pg{2,\dots,n}\text{ with }|I|=p
\end{equation}
if and only if, up to rearranging the $u_j$'s, one of the following holds:
    \begin{enumerate}
        \item[$(i)$] either $p=n-2$, and there exists $k\in\{0,\dots,n-1\}$ such that 
        \begin{equation}\label{Ap=n-2}
            u_2=\dots=u_{k+1}=t,\quad u_{k+2}=\dots=u_n=t+\lambda, \quad \mbox{for } t:=\frac{k+1-n}{n-2}\lambda,
        \end{equation}
        \vskip0.1cm
        \item[$(ii)$]\label{Ap2} or $2\le p\le n-3$, and one of the cases below holds:
        \begin{enumerate}
            \item[$(a)$]\label{Aca} $u_2=\dots=u_n=0$,
            \item[$(b)$]\label{Acb} $u_2=\dots=u_n=-\frac{\lambda}{p}$,
            \item[$(c)$]\label{Acc} $u_2=\dots=u_{n-1}=0$, $u_n=-\lambda$,
            \item[$(d)$]\label{Acd} $u_2=\dots=u_{n-1}=-\frac{\lambda}{p}$, $u_n=\frac{p-1}{p}\lambda$.
        \end{enumerate}
\end{enumerate}
\end{lemma}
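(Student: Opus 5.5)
The plan is to set $m:=n-1$ (the number of variables), and for each $p$-subset $I$ write $S_I:=\sum_{j\in I}u_j$. Then \eqref{realsum} says $S_I\in\{0,-\lambda\}$ for every $|I|=p$. The sufficiency direction is a direct check in each case. In (a) all $S_I=0$. In (b) all $S_I=-\lambda$. In (c) we get $S_I\in\{0,-\lambda\}$ according to whether $n\in I$. In (d) we get $S_I=-\lambda$ if $n\notin I$, and otherwise $S_I=-\frac{p-1}{p}\lambda+\frac{p-1}{p}\lambda=0$. In (i), with $p=n-2=m-1$, $I$ is the complement of a single index $j$, so $S_I=\Sigma-u_j$ where $\Sigma:=\sum u_j$. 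For the $u_j$ given in \eqref{Ap=n-2} we have $\Sigma=kt+(n-1-k)(t+\lambda)=(n-1)t+(n-1-k)\lambda$. Substituting $t$ gives $\Sigma=t$, consistent with Remark \ref{sum}. Hence $S_I\in\{0,-\lambda\}$.

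For necessity, the basic tool is the swap: if $I\ni a$ and $b\notin I$, then $u_a-u_b=S_I-S_{I\cup\{b\}\setminus\{a\}}\in\{0,\pm\lambda\}$. If $\lambda=0$ all $u_j$ are equal, say to $u$, with $pu=0$, so we are in case (a). Otherwise we rescale to $\lambda=1$ (or just keep $\lambda$). Any two values then differ by $0$ or $\pm\lambda$, so the $u_j$ take at most two values $t$ and $t+\lambda$, with multiplicities $k$ and $m-k$.

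Case $p=m-1$: here $S_I=\Sigma-u_j$, and we need $\Sigma-t$ and $\Sigma-t-\lambda$ to lie in $\{0,-\lambda\}$ whenever the corresponding value actually occurs. If both values occur, then necessarily $\Sigma=t$. Expanding $\Sigma=(n-1)t+(n-1-k)\lambda=t$ gives $t=\frac{k+1-n}{n-2}\lambda$, which is (i). If only one value occurs, we have $u_j\equiv u$ with $(m-1)u\in\{0,-\lambda\}$; this is contained in (i) with $k=n-1$ or $k=0$ (checking $t$ accordingly), and the statement's range $k\in\{0,\dots,n-1\}$ accommodates exactly this.

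Case $2\le p\le m-2$: this is the main step. If a single value $u$ occurs, then $pu\in\{0,-\lambda\}$, giving (a) or (b). If both values occur with multiplicities $k\ge1$ and $m-k\ge1$, then a $p$-subset can contain $r$ copies of $t+\lambda$ for every $r$ in the range $[\max(0,p-k),\min(p,m-k)]$, and $S=pt+r\lambda$. Since $S$ takes at most two values, namely $0$ and $-\lambda$, which differ by exactly $\lambda$, this range must have length at most $1$. Because $p\le m-2$ and $p\ge2$, a short count shows the range has length at least $2$ unless $k=1$ or $m-k=1$. If $m-k=1$, the range is $\{0,1\}$, so $pt=-\lambda$ and $pt+\lambda=0$; thus $t=-\lambda/p$, which gives (d). If $k=1$, the range is $\{p-1,p\}$, so $pt+p\lambda$ and $pt+(p-1)\lambda$ must be $0$ and $-\lambda$ respectively, whence $t=-\lambda$ and $t+\lambda=0$, which gives (c) after reordering. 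Verifying the length-$\ge2$ claim is the only delicate bookkeeping, and I would settle it by checking the endpoints directly: $\min(p,m-k)-\max(0,p-k)\ge 2$ whenever $2\le k\le m-2$ and $2\le p\le m-2$.
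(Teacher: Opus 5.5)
Your proof is correct and follows essentially the paper's route: the swap argument forces at most two values $t,t+\lambda$, the case $p=n-2$ is handled via $S_I=\Sigma-u_j$, and for $p\le n-3$ your count of the admissible range of $r$ repackages the paper's double-swap argument excluding $2\le k\le n-3$, followed by the same computations in the remaining cases $k\in\{0,1,n-2,n-1\}$. The only cosmetic slip is that for $\lambda=0$ and $p=n-2$ the all-zero solution should be labelled as case (i) with $t=0$ rather than case (ii)(a).
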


\begin{proof} If $\lambda=0$, then (i) or (ii) imply that $u_2=\ldots=u_n=0$, so \eqref{realsum} holds. Conversely, \eqref{realsum} implies that 
\begin{equation}\label{realsum0}
   \sum_{j\in I} u_{j}=0,\quad\quad\text{for all }I\subset\pg{2,\dots,n}\text{ with }|I|=p.
\end{equation}
 For every pair of indices $l\neq m$, there is a set $I_0\subset\{2,\ldots,n\}$ with $|I_0|=p-1$, such that $l,m\notin I_0$. Then \eqref{realsum0} applied to $I=I_0\cup\{l\}$ and $I=I_0\cup\{m\}$ implies that the $u_j$'s are all equal, and thus they all vanish by using again \eqref{realsum0}. 
 
 We now assume for the rest of the proof that $\lambda\neq0$ and consider separately the cases $p=n-2$ and $p\le n-3$.
 
(i) If $p=n-2$, let $t:=u_2+\dots+u_n$.
Then, \eqref{realsum} is equivalent to 
\begin{equation}\label{ltu}
    \pt{\lambda+t-u_j}(t-u_j)=0,
\end{equation}
for all $j=2,\dots,n$.
In other words, every $u_j$ is either $t$ or $t+\lambda.$
Let $k$ be the number of $u_j$'s equal to $t$, and rearrange the indices so that $u_2=\dots=u_{k+1}=t$, and  $u_{k+2}=\dots=u_n=t+\lambda $. Then
\begin{equation*}
    t=u_2+\dots+u_n=kt+(n-k-1)(t+\lambda)=(n-1)t+(n-k-1)\lambda,
\end{equation*}
and \eqref{Ap=n-2} follows. Conversely, \eqref{Ap=n-2} obviously implies \eqref{ltu}.

(ii) If $p\le n-3$, for every pair of indices $l\neq m$, there is a set $I_0\subset\{2,\ldots,n\}$ with $|I_0|=p-1$, such that $l,m\notin I_0$.
We denote 
\begin{equation*}
  I_l:=I_0\cup\pg l,\qquad I_m:=I_0\cup\pg m,\qquad  s\coloneqq\sum_{j\in I_0} u_{j}.
\end{equation*}
Assume that \eqref{realsum} holds. For $I=I_m$, we obtain $(\lambda+s+u_m)(s+u_m)=0$, namely either $s=-u_m$, or $s=-u_m-\lambda$.
Then, using now \eqref{realsum} with $I=I_l$, we obtain
\begin{equation*}
0=(\lambda+s+u_l)(s+u_l)=
\begin{cases}
    (\lambda-u_m+u_l)(-u_m+u_l),     &     \text{if }s=-u_m,  \\
    (-u_m+u_l)(-\lambda-u_m+u_l),     &     \text{if }s=-u_m -\lambda.
\end{cases}
\end{equation*}
The difference $u_m-u_l$ is thus either $0$, or $\pm\lambda.$ 
Since this holds for every choice of $m\neq l$, it follows that the $u_j$'s can be rearranged so that there exist $k\in\pg{0,\dots,n-1},$ $a\in\R$ with
\begin{equation}\label{kalambda}
u_2=\dots=u_{k+1}=a,\quad u_{k+2}=\dots=u_n=a+\lambda.
\end{equation}

We claim that $k\in\pg{0,1,n-2,n-1}$.
This always holds if $n=4$.
If $n\ge5$, assume for a contradiction that $2\le k\le n-3$. Then, by \eqref{kalambda}, $u_2=u_3=a$ and $u_{n-1}=u_n=a+\lambda.$
Let $I\subset\{2,\ldots,n\}$ be a set of cardinality $p$ such that $1,2\in I$, and $n-1,n\notin I$. Such a set always exists, because $2\le p\le n-3$.
Then, by \eqref{realsum}, 
\begin{equation}\label{sumI}
    \sum_{j\in I}u_j\in\pg{0,-\lambda}.
\end{equation}
Consider now $\tilde I\coloneqq( I\setminus\pg{1,2})\cup\pg{n-1,n}$, also of cardinality $p$.
Then, we compute 
\begin{equation*}
    \sum_{j\in\tilde I}u_j=\sum_{j\in I}u_j-u_1-u_2+u_{n-1}+u_n=\sum_{j\in I}u_j+2\lambda
    \in\pg{2\lambda,\lambda},
\end{equation*}
using \eqref{sumI} for the last equality.
However, this gives a contradiction with \eqref{realsum} applied to $\tilde I$, thus proving our claim that $k\in\pg{0,1,n-2,n-1}$.

We will now analyze each one of these cases.
Note that the cases $k=0$ and $k=n-1$ are equivalent, as in both cases all the $u_j$'s have the same value.
In this case, it only remains to compute the possible values of the $u_j$'s.
We observe that \eqref{realsum} does not depend on the choice of the set $I$ in this case, and it is equivalent to $(\lambda+pu_2)pu_2=0$, namely 
\begin{equation*}
    u_2\in\pg{0,-\frac{\lambda}{p}}.
\end{equation*}
In other words, we are in case (iia) or (iib) of the statement.

If $k=1$, then by \eqref{kalambda}, $u_3=\ldots=u_n=u_2+\lambda$, and \eqref{realsum} is equivalent to the following system
\begin{equation*}
    \begin{cases}
       (\lambda+pu_n)pu_n=0,\\
       0=(\lambda+u_2+(p-1)u_n)(u_2+(p-1)u_n)=pu_n(pu_n-\lambda)
    \end{cases}
\end{equation*}
depending on whether $2\notin I$ or $2\in I$.
The only solution to this system is $u_n=0$, and then $u_2=-\lambda$.
Up to rearranging the $u_j$'s, this corresponds to case (iic) in the statement.

It remains to consider the case $k=n-2$.
By \eqref{kalambda} we have $u_2=\ldots=u_{n-1}=u_n-\lambda$, so  \eqref{realsum} is equivalent to 
\begin{equation*}
    \begin{cases}
       (\lambda+pu_2)pu_2=0,\\
       0=(\lambda+(p-1)u_2+u_n)((p-1)u_2+u_n)=(pu_2+2\lambda)(pu_2+\lambda),
    \end{cases}
\end{equation*}
depending on whether $n\notin I$ or $n\in I$. The unique solution is 
\begin{equation*}
    u_2=-\frac{\lambda}{p},\quad\quad u_n=u_2+\lambda=\frac{p-1}{p}\lambda,
\end{equation*}
so we fall in case (iid) of the statement. The converse is obvious in all 4 cases.
\end{proof}

\bibliographystyle{plain}
\bibliography{references}
\end{document}